\numberwithin{equation}{section}
\DeclareMathSymbol{\leqslant}{\mathalpha}{AMSa}{"36} 
\DeclareMathSymbol{\geqslant}{\mathalpha}{AMSa}{"3E} 
\DeclareMathSymbol{\eset}{\mathalpha}{AMSb}{"3F}     
\renewcommand{\le}{\;\leqslant\;}                   
\renewcommand{\ge}{\;\geqslant\;}                   
\newtheorem{maintheorem}{Theorem}
\newtheorem{theorem}{Theorem}[section]
\newtheorem{lemma}[theorem]{Lemma}
\newtheorem{proposition}[theorem]{Proposition}
\newtheorem{corollary}[theorem]{Corollary}
\newtheorem{fact}[theorem]{Fact}
\newtheorem{definition}[theorem]{Definition}
\newtheorem{remark}[theorem]{Remark}
\newtheorem{conjecture}[theorem]{Conjecture}
\newtheorem{example}[theorem]{Example}
\newtheorem{open}[theorem]{Open Problem}
\newcommand{\PP}{{\mathbb P}}
\def\E{\mathop{\mathbb E}}
\newcommand{\as}{\mathrm{a.s.}}
\newcommand{\whp}{\mathrm{w.h.p.}}
\newcommand{\Pois}{\mathrm{Pois}}
\def\eps{\varepsilon}
\renewcommand{\epsilon}{\varepsilon}
\newcommand{\lac}{\lambda_{\mathrm{c}}}
\renewcommand{\Pr}{ \mathrm P}
\newcommand{\gd}{\delta}
\newcommand{\la}{\lambda}
\newcommand{\SC}{ \mathrm{SC} }
\newcommand{\F}{{\cal F}}
\newcommand{\simt}{\stackrel{t}{\sim}}
\newcommand{\meett}{\stackrel{t}{\leftrightarrow}}
\newcommand{\sfrac}[2]{\mbox{\small $\frac{#1}{#2}$}}
\newcommand{\ssfrac}[2]{\mbox{\footnotesize $\frac{#1}{#2}$}}
\newcommand{\half}{\ssfrac{1}{2}}
\let\phi=\varphi
\newcommand{\N}{\mathbb N}
\newcommand{\R}{\mathbb R}
\newcommand{\Z}{\mathbb Z}
\newcommand{\W}{\mathcal W}
\newcommand{\Ind}[1]{\mathbf{1}\{#1\}}
\begin{document}

\title{Rapid social connectivity}
\author{Itai Benjamini
\thanks{
Department of Mathematics, Weizmann Institute
of Science, Rehovot 76100,
Israel. E-mail: {\tt itai.benjamini@weizmann.ac.il}.}
\and Jonathan Hermon
\thanks{
University of Cambridge, Cambridge, UK. E-mail: {\tt jonathan.hermon@statslab.cam.ac.uk}. Financial support by
the EPSRC grant EP/L018896/1.}
}
\date{}
\maketitle

\begin{abstract}
Given a  graph $G=(V,E)$,  consider Poisson($ |V|$) walkers performing independent lazy simple random walks on $G$ simultaneously, where the initial position of each walker is chosen independently with probability proportional to the degrees.
When two walkers visit the same vertex at the same time they are declared to be
acquainted. The social connectivity time $\SC(G)$ is defined as the first time in which there
is a path of acquaintances between every pair of walkers. It is shown that when the average degree of $G$ is $d$, with high probability \[ c\log |V| \le \SC(G) \le C d^{1+5 \cdot 1_{G \text{ is not regular}} } \log^3 |V|.\]
When $G$ is regular the  lower bound is improved to $\SC(G) \ge \log |V| -6 \log \log  |V| $, with high probability.
 We determine $\SC(G)$ up to a constant factor in the cases that $G$ is an expander and when it is the $n$-cycle. \end{abstract}

\paragraph*{\bf Keywords:} Social network, random walks, giant component, coalescence process.
{\small 
}
\newpage


\section{Introduction}
Consider the following model for a social network which we call the \textbf{random walks social
network model} or for short, the \emph{social network model}. We write \textbf{SN} as a shorthand for social network. Let $G=(V,E)$
be a finite connected graph,
which is the underlying graph of the SN model. 
In the model there are walkers performing independent lazy simple random
walks (\textbf{LSRW}) on $G$, defined as follows.  If a
walker's current position is $v$, then the walker either stays put w.p.~$1/2$, or moves to one of the neighbors of $v$ w.p.~$\frac{1}{2d_{v}}$, where $d_v$ is the degree of $v$.
The walkers perform their LSRWs simultaneously (i.e.~at each time unit they
all perform one step, which may be a lazy step).

 At time $0$ the number of walkers at vertex $v$ is set to be $N_{v}$, where $(N_{v})_{v \in V}$ are independent random variables, where for each vertex $v$, we have that $N_v \sim \mathrm{Poisson}( \bar \pi_v)$  where $\bar \pi := |V| \pi $ and $\pi_v:= d_v/(2|E|)$ is the stationary distribution of the random walk on $G$. 
 
\medskip

A directed graph $G$ is called \emph{Eulerian} if for every vertex $v$ the indegree of $v$ equals its outdegree, denoted (unambiguously) by $d_{v}$. One can define LSRW on $G=(V,E)$ in an analogous manner to the undirected case (e.g., \cite{PS}).  Its stationary distribution is  $\pi_v:= d_v/|E|$ (here $E$ is the set of directed edges). We say that a directed graph $G=(V,E)$ is \emph{connected} if for all $u,v \in V$ there is a directed path in $G$ from $u$ to $v$.\footnote{This is sometimes referred to as being strongly connected.} We
say that an Eulerian graph $G=(V,E)$ is regular when $d_v=d_u$ for all $u,v \in V$. We denote a directed edge from $u $ to $v$ by $(u,v)$ and an undirected edge by $\{u,v\}$.

 We shall prove our main results in the setup that $G$ is a connected Eulerian digraph. Note that any graph can be thought of as an Eulerian digraph in which each edge appears with both orientations. However, the examples we consider shall all be graphs. Namely, the $n$-cycle and regular expanders.

\medskip

We say that two
walkers $w,w'$ have \emph{met by time} $t$, which we denote by $w \meett w' $, if
there exists $t_{0} \le t$ such that they have the same position at time
$t_{0}$. After two walkers meet they
continue their walks independently without coalescing. ``Meeting
by time $t$" is a symmetric relation. It  induces the equivalence relation of \emph{having
a path of acquaintances
by time} $t$,  denoted by $\simt$, defined as follows. Two walkers $a$ and $b$ have a path of acquaintances
by time $t$ iff  there exist $k\in \N$ and walkers $a=c_{0},c_{1},\ldots,c_{k},c_{k+1}=b$ such that $c_{i} \meett c_{i+1} $,
 for all $0 \le i \le k$. Note that we are not requiring the sequence of times in which the walkers met to be non-decreasing, which is the main difference between the SN model and some existing models for spread of rumor/infection (e.g.~the $A+B \to 2B$ model \cite{kesten2005spread} and the Frog model \cite{telcs1999branching,alves2002phase,alves2002shape,popov2001frogs,hoffman2014frog}, see \S\ref{s:related} for a description of these models and a discussion about their relationship to the SN model). Consequently, the SN model often evolves  faster than such models. However,  Conjecture \ref{con:polylog} suggests one sense in which  these models   might also evolve rapidly. 

\medskip

We are interested
in the coalescence process of the equivalence classes
of  $\simt$ (as $t$ varies). In particular, we are interested in the  \textbf{social connectivity time} $\SC(G)$, defined as the minimal time in which there is only one class (i.e.~every pair of walkers have a path of acquaintances between them).
\section{Our results}
\subsection{General graphs}
Our main result is 
\begin{maintheorem}
\label{thm: 1}
Let $G=(V,E)$ be a connected $n$-vertex Eulerian digraph of average degree $d=\sfrac{|E|}{n}$. Denote $r_{*}:= \sfrac{ \min_{u\in V}d_u}{d}  $. Then there exist absolute constants\footnote{We use $C,C',C_0,C_1,K,M\ldots$ (resp.~$\delta,\epsilon, c,c',c_0,c_1,\ldots $) to denote positive absolute constants which are sufficiently large (resp.~small) to ensure that a certain inequality holds. Similarly, we use $C_{d}$ (resp.~$c_{d}$) to refer to sufficiently large (resp.~small) positive constants, whose value depends on the parameters appearing in subscript. Different appearances of the same constant at different places may refer to different numeric values.} $C,M,c>0$ such that
\begin{equation}
\label{eq: upper}
\Pr[ \SC(G) > C d^{1+ 5 \xi  } (\log n)^{3} ] \le M/n, \quad \text{where }  \xi:=1_{G \text{ is not regular}}.
\end{equation}
\begin{equation}
\label{eq: lower}
\forall \, \alpha \in (0,1),\, \exists \, c_{\alpha}>0, \quad \Pr[ \SC(G) \le c_{\alpha} \log n  ] \le 3e^{-cr_{*}^2n^{\alpha}} . 
\end{equation}
\end{maintheorem}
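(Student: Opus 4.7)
The plan is to treat the upper bound \eqref{eq: upper} and the lower bound \eqref{eq: lower} separately. For \eqref{eq: upper} I propose a phased cluster-growing argument, exploiting that by the Poisson initial density each walker in time $T$ acquires a polylogarithmic number of acquaintances even on slow-mixing graphs such as the cycle: a walker visits many distinct vertices, and by the Poisson density each such visit encounters $\Theta(1)$ additional walkers. In a first phase of length $T_{1}=O(d^{1+5\xi}\log^{2}n)$, a second-moment calculation on pairwise meeting probabilities produces a linearly-sized cluster in $H_{T_{1}}$; meeting probabilities are controlled via Poisson thinning (conditional on one walker's trajectory, the remaining walkers form an independent Poisson field on trajectories, so the number of meetings is Poisson with an explicitly computable mean). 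In a second phase of comparable length, every remaining walker joins this giant cluster with high probability, yielding \eqref{eq: upper}. The main technical obstacle is the non-regular case: the factor $d^{5}$ arises from controlling meeting probabilities between walkers at vertices of disparate degree, and one must exploit that the Poisson starting distribution weights high-degree vertices to offset some of this imbalance.

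For \eqref{eq: lower} I would use a Poisson isolated-walker argument. Fix $\alpha\in(0,1)$ and set $t=c_{\alpha}\log n$ with $c_{\alpha}$ small. For a walker $w$ with trajectory $\gamma=(v_{0},\dots,v_{t})$, condition on $\gamma$; by Poisson thinning the other walkers form an independent Poisson system on trajectories, so the number of other walkers that meet $w$ by time $t$ is Poisson with mean
\[
\lambda(\gamma) \;\le\; \sum_{s=0}^{t}\bar\pi_{v_{s}} \;=\; \sum_{s=0}^{t}\frac{d_{v_{s}}}{2d},
\]
bounded via a union bound over times $s$ and using stationarity. Hence $\Pr[\,w\text{ isolated in }H_{t}\mid\gamma\,]\ge e^{-\lambda(\gamma)}$. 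Restricting to walkers initially at a minimum-degree vertex $v_{*}$ (where $\bar\pi_{v_{*}}=r_{*}/2$) and exploiting laziness so that $\gamma$ stays at $v_{*}$ for the entire time interval with probability $2^{-t}=n^{-c_{\alpha}\log 2}$, produces a per-walker isolation probability of at least $n^{-C_{1}c_{\alpha}}$ for an absolute constant $C_{1}$.

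The final step exploits that conditional on each candidate walker's trajectory, the remaining walkers form independent Poisson fields, so isolation events across candidates are effectively independent. With a pool of Poisson size of order $r_{*}n$ candidate walkers starting at low-degree vertices, choosing $c_{\alpha}$ small enough so that the per-walker isolation probability is at least $n^{-(1-\alpha)}$ yields an expected count of isolated walkers of order $r_{*}n^{\alpha}$; a second factor of $r_{*}$ is absorbed from the cost of restricting the trajectory to stay near $v_{*}$. Poisson concentration then gives
\[
\Pr[\,\text{no walker is isolated}\,] \;\le\; \exp\bigl(-c r_{*}^{2} n^{\alpha}\bigr),
\]
and a small additive union bound for control of the total walker count produces the factor of $3$. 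The principal technical hurdle is producing the \emph{quadratic} $r_{*}^{2}$ dependence rather than a linear one in the non-regular case.
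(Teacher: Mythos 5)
Both halves of your proposal contain genuine gaps.

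For the upper bound, the claim that a second-moment calculation on pairwise meeting probabilities produces a linearly-sized cluster in $H_{T_{1}}$ is the crux of your phase~1, and it is unsupported. Controlling pairwise meeting probabilities (even with exact Poisson thinning) tells you how many edges the acquaintance graph has, not that it contains a giant connected component; that is a percolation-type statement. The paper proves such a statement only for expanders (Theorem \ref{thm: et2}), and even there it requires a delicate argument (good classes occupying many vertices, a union bound over $2^{n/(2t)}$ partitions, and the spectral gap); nothing of the sort is available for, say, the cycle or the chain-of-cliques of Example \ref{rem: extermal}. The paper's route for general $G$ avoids giant components entirely: it discretizes time into rounds of length $t\approx (d^{\xi}\log n)^{2}$ and shows that in each round \emph{every} class, of any size, merges with some other class with probability at least $\alpha\sim d^{-(1+\xi)}$, so the number of classes shrinks geometrically (Lemma \ref{lem: mainlem}, Proposition \ref{p:derof1.1}). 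The key technical input is that w.h.p.\ the configuration at each time is $(\delta,t)$-balanced (Definition \ref{def:ballanced}, Corollary \ref{cor:LDforfuture}), i.e.\ the expected occupation $t$ steps ahead is near-stationary everywhere; a balanced configuration is then shown to be marginally merging by locating a boundary vertex of the set on which a given class dominates (Proposition \ref{prop: main}, which uses laziness via Lemma \ref{lem:neighbormass}). You would need either to supply a genuine proof of rapid giant-component formation for arbitrary Eulerian digraphs, or to switch to an iterative class-merging argument of this type. Your phase~2 (absorption of the remaining walkers) likewise has no mechanism on a general graph.

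For the lower bound, the per-walker isolation computation is fine and matches Propositions \ref{p:mlower} and \ref{prop: mainlower}, but the concentration step fails: isolation events for different walkers are \emph{not} independent, even after conditioning on trajectories, and ``effectively independent'' is not an argument. Moreover, if they were independent you would obtain an error term $\exp(-c\,r_{*}n^{\alpha})$, \emph{linear} in $r_{*}$; your attempt to recover the quadratic dependence by ``absorbing a factor of $r_{*}$ from the cost of restricting the trajectory'' is incoherent, since the laziness cost $2^{-t}$ has nothing to do with $r_{*}$. The paper instead conditions on the total number of walkers $m\in[\sfrac{n}{2},2n]$ (so that the $m$ trajectories become i.i.d.) and applies McDiarmid's bounded-differences inequality \eqref{e:Mcstatement} to the number $Y(t)$ of isolated walkers, noting that changing one trajectory changes $Y(t)$ by at most $t+2$; this yields $\mathbb{P}^{(m)}[Y(t)=0]\le\exp\bigl(-2(\mathbb{E}^{(m)}[Y(t)])^{2}/(m(t+2)^{2})\bigr)$, and with $\mathbb{E}^{(m)}[Y(t)]\ge c\,r_{*}n^{(1+\alpha)/2}$ (from \eqref{e:ml1}, choosing $c_{\alpha}$ small) this is exactly where both the $r_{*}^{2}$ and the $n^{\alpha}$ come from (Corollary \ref{cor:lowerm}).
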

Note that \eqref{eq: lower} is meaningful as long as $r_* \ge |V|^{\beta-0.5}$ for some $\beta >0$. Theorem \ref{thm: lower2} gives an  extension  covering in particular the case $r_{*} \ge |V|^{\beta-1}$ for some $\beta > 0$.
When $G$ is a regular graph we have the following improved lower bounds on $\SC(G) $.
\begin{maintheorem}
\label{thm: reg}
There exist $c,C>0$ so that for every finite connected regular graph   $G=(V,E)$,  \begin{equation}
 \label{e:reglower1}
\Pr[\SC(G)< \log |V| -6 \log \log |V| ] \le C/\log |V|. \end{equation}
 \begin{equation}
 \label{e:reglower2}
 \Pr[\SC(G)< s_{*}(G) ] \le C \exp(- c \sqrt{|V|} (\log |V|)^{-4} ),  \end{equation}
 where $P$ is the transition matrix of the LSRW, \[\kappa_t:=\min_{v \in V}\sum_{i=0}^tP^{2i}(v,v)  \quad \text{and} \quad   s_{*}(G):=\min \{t: \sfrac{ t+1}{\kappa_{t+1}} > \sfrac{1}{32}  \log |V| \}.\]
\end{maintheorem}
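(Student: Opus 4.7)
The strategy is, for each bound, to exhibit with high probability a walker that has met no other walker by time $t$, for $t$ just below the claimed threshold. Let $A_t$ denote the number of such ``isolated'' walkers. Writing the walkers as a marked Poisson process (each initial vertex carries an independent LSRW trajectory as its mark), the Mecke/Palm formula yields
\[
\E[A_t] \;=\; |V| \cdot \E_{\pi}\!\left[e^{-\Lambda(\tau)}\right], \qquad \Lambda(\tau) \;:=\; \sum_{u \in V} |V|\pi_u \cdot \Pr_u\bigl[\,\text{LSRW from } u \text{ meets } \tau \text{ by time } t\,\bigr],
\]
where $\tau$ is a $\pi$-stationary LSRW trajectory. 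The whole proof reduces to upper-bounding $\Lambda(\tau)$ and then obtaining concentration of $A_t$.

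For the first bound \eqref{e:reglower1}, I would use the crude union bound $\Pr_u[\text{meet}] \le \sum_{s=0}^t P^s(u,\tau(s))$; summing over $u$ using $|V|\pi_u \equiv 1$ (regularity) and reversibility gives $\Lambda(\tau) \le t+1$ \emph{pointwise}, so $\E[A_t] \ge |V|\,e^{-(t+1)}$. Taking $t = \log|V| - 6\log\log|V| - 1$ gives $\E[A_t] \ge (\log|V|)^6$. A second-moment calculation, again via Mecke applied to two tagged walkers, controls $\Var[A_t]$: two $\pi$-started trajectories intersect with probability only $\le (t+1)/|V|$, and the ``double-meet'' correction $\sum_u |V|\pi_u\Pr_u[\text{meets both}]$ is small on average, which gives $\Var[A_t] = O(\E[A_t])$. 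Chebyshev then yields $\Pr[A_t = 0] \le O(1/\log^6|V|) \le C/\log|V|$.

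For the sharper bound \eqref{e:reglower2}, the crude union is wasteful for walkers starting near $\tau$, which meet $\tau$ many times once they meet at all. Let $N_u$ count the meetings between an LSRW from $u$ and $\tau$. Paley--Zygmund gives $\Pr_u[N_u \ge 1] \le (\E N_u)^2/\E[N_u^2]$, and the strong Markov property combined with reversibility shows $\E[N_u \mid N_u \ge 1] \ge c\,\kappa_{t+1}$; hence $\Pr_u[N_u \ge 1] \le C\E[N_u]/\kappa_{t+1}$, and summing yields $\Lambda(\tau) \le C(t+1)/\kappa_{t+1}$. At $t = s_*(G)-1$ this gives $\E[A_t] \ge |V|\exp(-C\log|V|/32) \ge |V|^{1-C/32}$. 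To upgrade Chebyshev (which only gives polynomial decay) to the claimed exponential tail, I would exploit the Poisson structure to produce $M = \Omega(\sqrt{|V|})$ nearly independent witnesses: select $\sqrt{|V|}$ starting vertices whose space--time neighborhoods (in the sense above) are pairwise disjoint, so that walker existence at each is an independent Bernoulli$(1-e^{-1})$ trial, and verify via the sharpened bound applied to the conditional law that each witness is isolated with probability $\ge (\log|V|)^{-O(1)}$. Combining Chernoff for the existence events with the product over the approximately-independent isolation events gives $\Pr[A_t = 0] \le C\exp(-c\sqrt{|V|}/(\log|V|)^4)$.

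The two hardest steps are (i) the sharpened meeting bound $\Lambda(\tau)\le C(t+1)/\kappa_{t+1}$, which requires a delicate reversibility / strong-Markov argument that a single meeting typically spawns $\Omega(\kappa_{t+1})$ further ones uniformly in $u$ and in $\tau$; and (ii) the exponential concentration in \eqref{e:reglower2}, where a direct second-moment method fails and one must construct well-separated witnessing events that are \emph{both} supported on almost-disjoint portions of the Poisson intensity \emph{and} have non-negligible individual isolation probability.
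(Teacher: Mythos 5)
Your first-moment computations match the paper's: for \eqref{e:reglower1} the paper also lower-bounds the expected number of isolated walkers by $|V|e^{-(t+1)}\ge(\log|V|)^6$, and for \eqref{e:reglower2} it proves exactly your sharpened bound, in the form $\mathbb{P}[N(t)\ge 1]\le \frac{(2t+1)/|V|}{\kappa_t}$ via $\mathbb{E}[N]=\mathbb{P}[N\ge1]\,\mathbb{E}[N\mid N\ge1]$ together with reversibility (note that Paley--Zygmund, which you invoke, gives a \emph{lower} bound on $\mathbb{P}[N_u\ge1]$, not the upper bound you need; the conditional-expectation identity is the correct tool, and your conclusion is right). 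The gaps are in the concentration steps. For \eqref{e:reglower1}, the claim $\Var[A_t]=O(\mathbb{E}[A_t])$ is false in general: pairs of walkers whose space--time tubes overlap substantially (e.g.\ on the cycle, starting points within distance $\sqrt t$) contribute $\mathrm{poly}(t)\cdot\mathbb{E}[A_t]$ to the variance. This near-diagonal bookkeeping is precisely where the paper invests its effort --- it introduces the sets $\mathrm{Bad}_\gamma$ with $m(\mathrm{Bad}_\gamma)\le(t+1)^5$ and a sequential exposure scheme to extract $n/(2(t+1)^5)$ pairwise well-separated occupied trajectories, paying the $(t+1)^5$ factor in the mean rather than in the variance. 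Your route is repairable (the near-diagonal covariance is at most $(t+1)^5\mathbb{E}[A_t]$, and Chebyshev then still yields $C/\log|V|$, though not your claimed $O(1/\log^6|V|)$), but as written the key estimate is asserted rather than proved.

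For \eqref{e:reglower2} the concentration plan does not work as described. At $t=s_*(G)-1$ each walker is isolated with probability about $\exp(-c(t+1)/\kappa_{t+1})=|V|^{-\Theta(1)}$, not $(\log|V|)^{-O(1)}$; with $\sqrt{|V|}$ independent witnesses each succeeding with probability $|V|^{-1/16}$ you would only get $\exp(-|V|^{7/16})$, which does not imply \eqref{e:reglower2}. Moreover, genuinely independent witnesses cannot be produced by spatial separation: since $s_*(G)$ can be of order $(\log|V|)^2$, a single trajectory of length $t$ can reach a ball of size $d^{t}\gg|V|$, so no two starting vertices have disjoint space--time neighborhoods, and ``almost disjoint'' intensities bring you back to needing a quantitative concentration inequality. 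The paper's solution is different and simpler: condition on the total number of walkers $m\in[|V|/2,2|V|]$ (so the walkers are i.i.d.\ $\pi$-started), observe that changing one walker's trajectory changes the number of isolated walkers by at most $t+2$, and apply McDiarmid's bounded-differences inequality to get $\mathbb{P}^{(m)}[Y(t)=0]\le\exp(-2(\mathbb{E}^{(m)}[Y(t)])^2/(m(t+2)^2))$, which with $\mathbb{E}^{(m)}[Y(t)]\ge |V|^{1-1/8-o(1)}$ and $t\le C(\log|V|)^2$ comfortably beats the stated bound. You should replace your witness construction by this bounded-differences argument.
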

Note that $s_*(G) >  \sfrac{1}{32}  \log |V| $. We include  \eqref{e:reglower1}, despite this, as it is sharp, up to the value of the sub-leading term, and the failure probability, see the discussion below about extremality of the complete graph. In some cases $s_*(G) \ge C( \log |V|)^{2} $. For instance, for the $n$-cycle $\mathrm{C}_n$ we have that $P^{2i}(v,v)= \Theta(\sfrac{1}{\sqrt{i+1}}) $ and so $s_*(\mathrm{C}_n) = \Theta((\log n)^2)$.\footnote{We write $o(1)$ for terms which vanish as $n \to \infty$ (or some other index, which is clear from context). We write $f_n=o(g_n)$ or $f_n \ll g_n$ if $f_n/g_n=o(1)$. We write $f_n=O(g_n)$ and $g_n=\Omega(f_n) $ if there exists a constant $C>0$ such that $|f_n| \le C |g_n|$ for all $n$. We write  $f_n=\Theta(g_n)$ if  $f_n=O(g_n)$ and  $g_n=O(f_n)$. We write $f_{n} \le O_d(g_{n})$ to indicate that the constant $C$ above may depend on $d$.} Hence by \eqref{e:reglower2} $\Pr[\SC(\mathrm{C}_n)< c (\log n)^2]=o(1) $. This is not surprising as initially there will be intervals of length $\Omega(\log n) $ which will not be occupied by a single particle. This is why in terms of the lower bound on $\SC(\mathrm{C}_n)$ it is more interesting to study the case that initially there is one walker per site (as done in Theorem \ref{thm: cycle}). Conversely, there exists some $C>0$ such that the probability that $\SC(\mathrm{C}_n) > C s_*(\mathrm{C}_n) \ge C'(\log n)^2   $ is at most $C''/n$ (see \S\ref{s:introcycle}). We conjecture that there exists some universal constant $C>0$ such that for every connected finite vertex transitive graph $G$ (Definition \ref{def: VT})  $\sfrac{1}{C} \le \sfrac{\mathbb{E}[\SC(G)]}{s_{*}(G)+1} \le C $.   
\subsection{Some remarks concerning our main results}
\textbf{Degree dependence:} We now discuss the degree dependence in \eqref{eq: upper} and also a refinement of \eqref{eq: upper}.
Example \ref{rem: extermal} (resp.\ \ref{ex:nonregular})  describes for some absolute constants $c_0,c_1,C_0>0$ for all sufficiently large $n$ and all $C_{0} \le d_n \le c_0n $  a  family of graphs $G_n=(V_{n},E_{n})$ such that $n \le |V_n| \le C_{0}n $ which are  $d_{n}$-regular (resp.\ of average degree $d_n$) for which  $\Pr[ \SC(G_n)\ge c_{1} d_{n} \log ^{2}(|V_n|/d_{n})]> c_{1}$ (resp.\   $\Pr[ \SC(G_n)\ge c_{1} d_{n}^2 \log ^{2}(|V_n|/d_{n})]> c_{1}$). This shows that the linear dependence on $d$  in \eqref{eq: upper} is optimal for regular $G$ and that we can have $\mathbb{E}[ \SC(G)] \ge n  $ for an $n$-vertex regular $G$ and $\mathbb{E}[ \SC(G)] \ge n^{2}$ for non-regular $G$.

 If $G$ is $d$-regular and in addition for every set $A \subset V$ such that $|A| \le  \sfrac{2}{3} (|V|+|V|^{2/3})$, we have that \[|\{(u,v) \in E : u \in A,v \notin A \text{ or }u \notin A,v \in A \}| \ge c_0 d,\]  then the degree dependence in  \eqref{eq: upper} can be eliminated and \eqref{eq: upper} can be improved to \[\Pr[ \SC(G) > C c_0^{-2} \log^3|V| ] \le M/|V|,\] for some  absolute constants $C,M$ (see Proposition \ref{prop: main}, part (iii)).   

Now consider the case that $G$ is a connected $n$-vertex Eulerian digraph of average degree $d$. Let $\xi:=  1_{G \text{ is not regular}} $. From the proof of Theorem \ref{thm: 1} it follows that  we can replace  $ Cd^{1+ 5 \xi } (\log n)^3 $ in \eqref{eq: upper} by $C' t_*(G) d^{1+3 \xi }  \log n$, for some $t_*(G) \le \bar C  d^{2\xi }   (\log n)^2$.  Namely, \[t_{*}(G):= \inf \{t: \max_{x,y}|P^t(x,y)-\pi_y |\le d_{y}/(3 \cdot 2^{9}d \log n) \} ,\] where $P$ is the transition matrix of the LSRW on $G$. By Fact \ref{fact: decay} indeed  $ t_{*}(G)\le \bar C d^{2\xi} (\log n)^2$. 

For the cycle $t_{*}=\Theta((\log n)^2)$. This shows that  \eqref{eq: upper} is sharp up to a factor $O( \log n )$, when $d=O(1)$.  We believe that the cycle is in some sense extremal for the SN model (see Remark \ref{rem: extermal}, Theorem \ref{thm: cycle} and Conjecture \ref{con: cycle} for more details on this point). 
%
%
%

The next remark discusses the case in which the particle density is considered as a parameter of the model.

\medskip

\noindent
\textbf{Allowing the particle density to vary:}
One may consider the case in which the density of walkers at each site is multiplied by some $\lambda >0 $ (i.e.\ for all $v \in V $ the number of particles initially occupying it  $N_v$ has a $ \mathrm{Poisson}( \la \bar  \pi_v)$  distribution). Let $\xi:=1_{G \text{ is not regular}}$.  For $C_{0}\frac{ \log |V|}{|V|}d^\xi  \le \lambda \le 1$ only minor adaptations to the arguments from \S\ref{s: upper} are necessary in order to show that  \begin{equation*}
 \label{e:laintro}
 \whp  \quad \SC(G) \le  C_d \lambda^{-4} ( \log |V|)^3,  
\end{equation*}
(here we allow $\la$ to depend on $|V|$, provided that $\la \ge C\frac{ \log |V|}{|V|}d^\xi     $). With  additional work this can be improved to $\whp $ $\SC(G) \le  C d^{1+5 \xi } \lambda^{-2} ( \log |V|)^3 $.  When  $\lambda \in [|V|^{-1+ \eps } ,1] $ for some $\eps \in (0,1)$, this is optimal up to an $O_{d,\eps}(\log |V|)$ factor as described in Remark \ref{rem:lacycle}.   

When $G$ is vertex-transitive (see Definition \ref{def: VT}) we conjecture that \[\mathbb{E}_{\lambda}[\SC(G) \mid \text{there are at least two walkers}]\] is monotone decreasing in $\la$ (see Conjecture \ref{con: monotone} for a slightly different formulation). 

%

It is not hard to extend the proofs of \eqref{eq: lower} and \eqref{e:reglower1} to the case that the particle density at each site is multiplied by some $\lambda \in [|V|^{- \eps } ,1] $ for some $\eps \in (0,1/2)$. In this case, the lower bound on $\SC(G)$ is increased by a factor of $\la^{-1}$. Similarly, one can extend \eqref{e:reglower2} as follows: if $G=(V,E)$ is a connected $n$-vertex regular graph and $\lambda \in [n^{- \eps } ,1] $ for some $\eps \in (0,1/16)$ then $\whp$ $\SC(G) \ge s_{*}(G,\la):=\min \{t: \sfrac{ t+1}{\kappa_{t+1}} > \sfrac{1}{32 \la }  \log n \} $.   

\medskip

\noindent
\textbf{Other initial distributions:} We may consider the case that initially each vertex is occupied by a single particle. Consider the case that $G$ is a $d$-regular Eulerian graph.  We outline in \S\ref{s:deter} the necessary adaptations required in order to show that also in this setup we have    \begin{equation}
 \label{e:berintro}
 \whp\footnote{We say that a sequence of events $A_n$ defined with respect to some probabilistic model on a sequence of graphs $G_n:=(V_n,E_n) $ with $|V_n| \to \infty $ holds $\mathrm{w.h.p.}$ (``with high probability")  if the probability of $A_n$ tends to 1 as $n \to \infty$. }  \quad \SC(G) \le  C d (\log |V|)^3. %
\end{equation}

\medskip

\noindent
\textbf{Extremality of the complete graph:}
We will prove Theorem \ref{thm: reg} for an arbitrary holding probability $p \in [0,1)$ for the walks (i.e.~the probability of a walker staying put at each given step is $p$). Let $K_n$ be the complete graph on $n$ vertices. Consider the SN model on $K_n$ with some holding probability   $p_n \in [ 0,1/n] $. It is not hard\footnote{The proof is similar to the proof that the connectivity threshold for $G(n,p)$, the Erd\H os and R\'enyi random graph, occurs in a window of order $1/n$ around $p_n=(\log n)/n$ (for the SN model the proof is still elementary, but it is somewhat tedious and thus omitted).} to verify that $\SC(K_n)$ is concentrated around $\log n$ in a window whose width is of order 1.  Thus, by \eqref{e:reglower1} the complete graph is the regular graph with (asymptotically) the fastest social connectivity time (at least when the holding probability is between 0 and $1/(d+1)$).

\medskip

\textbf{Extensions:}
In \S\ref{s: open} we discuss some  extensions of our results (e.g.~allowing the walkers to belong to different communities, walking on different graphs with the same vertex set).

\subsection{The $n$-cycle and higher dimensional tori}
\label{s:introcycle}
\begin{maintheorem}
\label{thm: cycle}
Let $\mathrm{C}_n$ be the $n$-cycle. Consider the setup in which at time 0 there is exactly one walker at each vertex and that the walkers perform continuous-time independent SRWs (with jump rate 1). Then there exist some absolute constants $c_1,c_2,\alpha,C_1,C_2>0$ such that
\begin{equation}
\label{eq: cycleupper}
\Pr[\SC(\mathrm{C}_n)> C_1 \log^2 n] \le C_{2}/n.
\end{equation}
\begin{equation}
\label{eq: cyclelower}
\Pr[\SC(\mathrm{C}_n)< c_1 \log^2 n] \le e^{-c_2n^{\alpha}}.
\end{equation}  
\end{maintheorem}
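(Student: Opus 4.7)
The plan is a block decomposition for the upper bound and an uncrossed-edge argument for the lower bound.

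For the upper bound $\Pr[\SC(\mathrm{C}_n)>C_1\log^2 n]\le C_2/n$, fix $T=C_1\log^2 n$ and $L=A\log n$ with $A/\sqrt{C_1}$ sufficiently small. Then for any two walkers at initial distance at most $L$, their relative walk is a rate-$2$ continuous-time SRW starting at distance $\le L$, and elementary one-dimensional estimates give that it hits $0$ by time $T$ with probability at least some $p>3/4$. Cover $\Z_n$ by overlapping intervals $I_j=[jL/2,jL/2+L]$, $j=0,\dots,2n/L-1$, with consecutive intervals sharing $L/2$ walkers. I will show that for each $j$, the meeting graph restricted to the $L$ walkers originating in $I_j$ is connected with probability $\ge 1-n^{-2}$; a union bound over the $O(n/L)$ intervals then yields global failure $O(1/(nL))=O(1/n)$, and the overlap $|I_j\cap I_{j+1}|=L/2$ propagates acquaintance classes around the cycle.

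Intra-block connectivity is proved via the observation that the block graph is disconnected if and only if some walker $w_v$ lies in a component of size $\le L/2$. Fix $v$, bound $\Pr[|C_v|\le L/2]$, and union bound over the $L$ choices of $v$. For each candidate component $S\ni v$ with $|S|=k\le L/2$, conditioning on the trajectories $(X_i)_{i\in S}$ renders the events ``$w_u$ meets no element of $S$'' ($u\in I_j\setminus S$) conditionally independent across $u$, and each is at most $1-p$ by restricting to the nearest $w_{i^*(u)}\in S$. On the typical-trajectory event that every walker has range $\le K\sqrt{T\log n}$ (whose complement has probability $\le n^{-100}$ for $K$ large), this yields
\[
\Pr[|C_v|\le L/2]\;\le\;\sum_{k=1}^{L/2}\binom{L-1}{k-1}(1-p)^{L-k}\;\le\;L\cdot\bigl(4(1-p)\bigr)^{L/2}\;\le\;n^{-cA},
\]
which is $\le n^{-3}$ once $A$ is chosen large enough (permissible since $1-p<1/4$), so the block-level failure is $\le L\cdot n^{-3}\le n^{-2}$.

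For the lower bound $\Pr[\SC(\mathrm{C}_n)<c_1\log^2 n]\le e^{-c_2 n^{\alpha}}$, set $t=c_1\log^2 n$ with $c_1$ small and for each edge $e$ of the cycle let $B_e$ denote the event ``no walker has crossed $e$ by time $t$''. On $B_e$, the walkers split into two noncommunicating arcs, hence the meeting graph is disconnected and $\SC>t$. Independence of walkers together with the one-dimensional hitting estimates $1-p_d\asymp d/\sqrt{t}$ for $d\le\sqrt{t}$ and $1-p_d=1-e^{-\Theta(d^2/t)}$ for larger $d$, and Stirling, yield
\[
\Pr[B_e]\;=\;\prod_{i\in\Z_n}(1-p_{d_i})\;\ge\;e^{-C\sqrt{t}}\;=\;n^{-C\sqrt{c_1}}.
\]
Now select $k=n/\log^2 n$ edges $e_1,\dots,e_k$ at pairwise distance $\ge\log^2 n\gg\sqrt{t\log n}$. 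On the typical-range event (complement probability $\le n^{-100}$), each $B_{e_j}$ depends only on walkers within $K\sqrt{t\log n}$ of $e_j$, and these windows are pairwise disjoint, so the $B_{e_j}$'s are independent. Hence
\[
\Pr\Bigl[\bigcap_{j}B_{e_j}^{c}\Bigr]\;\le\;(1-n^{-C\sqrt{c_1}})^{k}+n^{-100}\;\le\;\exp\bigl(-n^{1-C\sqrt{c_1}}/\log^{2}n\bigr)+n^{-100}\;\le\;e^{-n^{\alpha}}
\]
for any $\alpha<1-C\sqrt{c_1}$ once $c_1$ is small enough.

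The main technical obstacle is the quantitative independence step in the lower bound: one must control the correlations between the $B_{e_j}$'s produced by atypical long-range excursions on a scale matching the target $e^{-c_2 n^{\alpha}}$ tail, rather than just the $o(1)$ one would get from Chebyshev's inequality. A secondary, more routine issue is the ``typical trajectory'' reduction in the upper-bound intra-block argument: its exceptional probability must be negligible compared to the per-block target $n^{-2}$, which is arranged by taking the range threshold $K\sqrt{T\log n}$ with $K$ large.
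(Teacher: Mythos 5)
Both halves of your argument have genuine gaps, and in both cases the missing ingredient is the one the paper's proof is built around.

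\textbf{Upper bound.} The central step of your intra-block argument is invalid. You condition on the trajectories of the walkers in $S$ and claim that each walker $u\notin S$ then fails to meet $S$ with (conditional) probability at most $1-p<1/4$. But the conditional probability that $w_u$ avoids a \emph{given realized} space--time path of $w_{i^*(u)}$ is not uniformly bounded by $1-p$: it is close to $1$ whenever the realized path of $w_{i^*(u)}$ drifts away from $u$, and it is at least of order $d/\sqrt{T}$-close to $1$... more to the point, it varies over the whole range $[\Theta(d/\sqrt T),1]$ depending on the conditioning, and your range event $\{\text{range}\le K\sqrt{T\log n}\}$ does not restore a constant lower bound on the conditional meeting probability. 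Since all the avoidance events $E_u$ share the same conditioning, they are conditionally independent but \emph{positively} correlated unconditionally, so $\Pr[\cap_u E_u]\ge\prod_u\Pr[E_u]$ -- the product of the marginals bounds the wrong side. Thus the estimate $\Pr[|C_v|\le L/2]\le\sum_k\binom{L-1}{k-1}(1-p)^{L-k}$ is not established (and whether the block meeting graph is even connected w.p.\ $1-n^{-2}$ is delicate: it depends on the interplay of $A$ and $C_1$, since the rightmost block walker avoids all others with probability at least roughly $n^{-A\log(\sqrt{C_1}/A)}$). The paper circumvents all of this: for each edge $e=\{u,v\}$ it defines events $\mathrm{CROSS}_s(e,w)$ that depend on the trajectory of the \emph{single} walker $w$ (she ends up on the other side of $e$ without going around the back), so these events are genuinely independent over $w$ and each has probability $\ge 0.4$; the continuous-time fact that two walkers cannot exchange sides without meeting then turns one crossing from each side into a path of acquaintances between the walkers at $u$ and $v$, and a union bound over the $n$ edges with $10\log n$ candidate crossers per side gives \eqref{eq: cycleupper}. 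Reformulating your meeting events as single-trajectory crossing events is the missing idea.

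\textbf{Lower bound.} Two problems. First, one uncrossed edge does \emph{not} disconnect the meeting graph: the cycle minus an edge is a path, so the two arcs still communicate the other way around. You need at least \emph{two} uncrossed edges, as the paper states; this is a minor repair, since your computation in fact produces many uncrossed $e_j$'s with the claimed probability. Second, and fatally for the stated bound, your localization at scale $K\sqrt{t\log n}$ leaves an exceptional event of probability $n^{-100}$, which dwarfs the target $e^{-c_2n^{\alpha}}$; you flag this yourself but do not resolve it, and as written you only obtain a polynomially small bound. The resolution is to localize at a polynomial scale: condition on the event $\mathrm{F}$ that no walker makes at least $(\sqrt n/2)-2$ jumps by time $t=c_1\log^2 n$, whose complement has probability $\le Cne^{-cn^{1/4}}$ by a Poisson tail bound; take $\lfloor\sqrt n\rfloor$ edges at mutual distance $\ge\sqrt n$, which are conditionally independent given $\mathrm{F}$, each uncrossed with conditional probability $\ge c_3n^{-1/4}$. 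Your per-edge estimate $\prod_i(1-p_{d_i})\ge e^{-C\sqrt t}$ is essentially the paper's computation (reflection principle for $d\le\sqrt t$, Chernoff for larger $d$, Stirling) and is correct; only the choice of localization scale and number of trial edges needs to change to reach the stretched-exponential tail.
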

The proof of \eqref{eq: cycleupper} is due to Gady Kozma. We thank him for allowing us to present his argument. Working in continuous-time  simplifies
the analysis of \eqref{eq: cycleupper}. The proof in the usual setup of  $\mathrm{Pois}(1)$ walkers per site is in fact simpler (see Remark \ref{rem: Poistrick}). As mentioned after Theorem \ref{thm: reg}, in the usual Poisson, discrete-time setup \eqref{eq: cyclelower}  follows from \eqref{e:reglower2}.
\begin{remark}
\label{rem:lacycle}
When initially the number of particles per site is $\Pois(\la) $ distributed (independently for different sites) it is possible to extend the analysis of Theorem \ref{thm: cycle}  in order to show that $\whp $ $c< \sfrac{\SC(\mathrm{C}_n)}{\max \{ [ \la^{-1} \log (\la n)]^2,1\} } \le C $
as long as $\la \gg \sfrac{1}{n}$ (cf.\ \cite{BHM} for the analysis of the frog model on the $n$-cycle for $ \la \gg n^{-1}$).
\end{remark}
\begin{remark}
Denote the $d$-dimensional torus of side length $n$ by $\mathbb{T}_d(n) $. This is the Cayley graph of $(\Z/n \Z)^d$ obtained by connecting by an edge each $x,y \in (\Z/n \Z)^d $ which disagree only in one coordinate, by $\pm 1$ mod $n$. Then  $\mathbb{T}_1(n)$ is the $n$-cycle. We comment that using similar ideas as in \cite[Theorem 2]{BHM} one can show that $\whp$ $c< \sfrac{ \SC(\mathbb{T}_2(n))}{\log n \log \log n}<C $ and $c< \sfrac{ \SC(\mathbb{T}_d(n))}{d\log n }<C $ for $d \ge 3$. This result appeared in an earlier version of this work. Note that $\SC(\mathbb{T}_2(n)) \ge c \log n \log \log n $  and  $\SC(\mathbb{T}_d(n)) \ge (1-o(1))d\log n  $  $\whp$ follow from \eqref{e:reglower2}-\eqref{e:reglower1}.
\end{remark}

\subsection{Expanders}
We denote by $ \gamma(G)$  the \textbf{spectral-gap} of LSRW on $G$, defined as  the smallest non-zero eigenvalue of $I-P$, where $P$ is the transition matrix of LSRW  on $G$ and $I$ is the identity matrix.  
We say that a sequence of graphs $G_{n}$ is an expander family if $\inf_n \gamma(G_n)>0$. We say that $G$ is a $\gamma$-\textbf{expander}
if $\gamma(G)
\ge \gamma$ (we think of $\gamma$ as being uniformly bounded away from 0, independently of the size of $G$, although our analysis remains valid even if this fails). The following result demonstrates that Theorem \ref{thm: reg} is sharp. 
\begin{maintheorem}
\label{thm: et1}
Let $G$ be a connected $d$-regular $n$-vertex $\lambda$-expander. Then there exist absolute constants $C,C',C_0$ such that if   $ \lambda \log_{d} n \ge C_{0}  $, then $\SC(G) \le C \lambda^{-1} \log n$ w.p.~at least $1-C'/n$.
\end{maintheorem}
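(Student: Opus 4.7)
The plan is to exploit the rapid mixing of LSRW on a $\lambda$-expander. Set $T:=C\lambda^{-1}\log n$ for a sufficiently large absolute constant $C$; I aim to show $\SC(G)\le T$ with probability $\ge 1-O(1/n)$. The hypothesis $\lambda\log_{d}n\ge C_{0}$ is used to ensure $T$ comfortably exceeds $\lambda^{-1}$, so that the product chain $P\otimes P$ on $V\times V$ (itself a $\lambda$-expander with uniform stationary distribution and diagonal mass $\pi(\Delta)=1/n$) mixes well within $[0,T]$.

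The first step is a \emph{pairwise meeting estimate}: for two independent LSRWs started from the uniform distribution,
\[\Pr_{\pi\otimes\pi}\!\bigl[X\text{ and }Y\text{ meet by time }T\bigr]\;\ge\;c\,T/n.\]
This follows from standard hitting-time estimates for rapidly mixing reversible chains: the product chain mixes in time $O(\lambda^{-1}\log n)\ll T\ll n$, so the hitting time of $\Delta$ from stationarity is approximately exponential with mean $\Theta(1/\pi(\Delta))=\Theta(n)$. As an immediate consequence, fixing a walker $w$ and conditioning on its trajectory, the remaining walkers form an independent $\Pois(1)$ point process on $V$ each performing an independent LSRW, so by Poisson thinning the number of distinct acquaintances of $w$ by time $T$ stochastically dominates $\Pois(\Omega(T))=\Pois(\Omega(\lambda^{-1}\log n))$. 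A Chernoff bound together with a union bound over the $\Theta(n)$ walkers then shows that, with probability $1-O(1/n^{2})$, every walker has at least $c_{1}\lambda^{-1}\log n\ge c_{1}\log n$ acquaintances by time $T$.

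The main obstacle is upgrading minimum degree to \emph{connectivity} of the acquaintance graph $H_{T}$, in the presence of correlated edges. I would perform a direct union bound over non-trivial bipartitions $(S,S^{c})$ of the walker set $W$. For $S\subset W$ with $|S|=k\le|W|/2$, conditioning on the trajectories of the $S$-walkers makes the $S^{c}$-walkers mutually independent; a second-moment (Paley--Zygmund) argument using the pairwise estimate shows that each $w'\in S^{c}$ meets \emph{some} walker in $S$ with probability $P_{S}\ge c\min\{1,\,kT/n\}$, uniformly over ``typical'' $S$-trajectories. Hence
\[\Pr\!\bigl[\text{no meeting between $S$ and $S^{c}$}\,\big|\,S\text{-trajs.}\bigr]\;\le\;(1-P_{S})^{|W|-k}\;\le\;e^{-c'kT/2}\]
(using $|W|-k\ge n/2$), and a union bound yields
\[\Pr[H_{T}\text{ disconnected}]\;\le\;\sum_{k=1}^{\lfloor|W|/2\rfloor}\binom{|W|}{k}e^{-c'kT/2}\;\le\;\sum_{k\ge 1}\exp\!\bigl(k(\log(en/k)-c'T/2)\bigr),\]
which is $O(1/n)$ once $c'T\ge 3\log n$, i.e.\ for $C$ a sufficiently large absolute constant. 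The main technical difficulty is justifying $P_{S}\ge c\min\{1,kT/n\}$ on a ``typical'' set of $S$-trajectories: the second moment of the meeting count carries a self-return term of order $|S|/\lambda$, which is controlled using the spectral bound $\sum_{u}P^{s}(v,u)^{2}\le 1/n+(1-\lambda)^{s}$ together with the fact that $T\gg\lambda^{-1}$, and a concentration argument is needed to show that $|\{X_{s}^{w}:w\in S\}|\ge|S|/2$ for most $s\in[0,T]$. Combining these with a further union bound over atypical $S$-trajectories of total probability $o(1/n)$ completes the proof.
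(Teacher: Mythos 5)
There is a genuine gap, and it sits in your third step; but first a smaller, systematic error. Your pairwise estimate $\Pr_{\pi\otimes\pi}[\text{meet by }T]\ge cT/n$ is false in general: the mean hitting time of the diagonal $\Delta$ for the product chain from stationarity is not $\Theta(1/\pi(\Delta))=\Theta(n)$ but $\Theta(n\kappa)$, where $\kappa$ is the expected number of visits to $\Delta$ per excursion, and for a $\lambda$-expander $\kappa$ can be of order $1/\lambda$ (this is exactly the quantity $\kappa_t=\min_v\sum_i P^{2i}(v,v)$ governing the paper's lower bounds, and the second-moment computation in Lemma \ref{lem: hitinexpander} accordingly yields only $\frac18\min\{\lambda t/n,1\}$). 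Hence a walker's acquaintance count by time $T=C\lambda^{-1}\log n$ dominates $\Pois(\Omega(\lambda T))=\Pois(\Omega(\log n))$, not $\Pois(\Omega(T))$; your minimum-degree conclusion survives with $c_1\log n$ acquaintances, but the missing factor of $\lambda$ matters below.

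The fatal problem is the union bound over all $2^{|W|}$ bipartitions $(S,S^c)$. To beat the entropy $\binom{|W|}{k}\approx e^{k\log(en/k)}$ you need, for every $S$ with $|S|=k$ and essentially every realization of the $S$-trajectories, $P_S\cdot n\ge 3k\log n$ with an absolute constant. The Paley--Zygmund bound delivers at best something like $P_S\ge\frac{k(T+1)/n}{1+2kT/n+2L/\lambda}$, where $L$ bounds the number of $S$-walkers sharing a vertex at a given time (the $L/\lambda$ term is your ``self-return term''); this is of order $\lambda kT/(Ln)=Ck\log n/(Ln)$ and suffices only if $L=O(1)$. But $L=O(1)$ uniformly over all $\binom{|W|}{k}$ choices of $S$ --- including adversarially clustered ones, e.g.\ the $k$ walkers currently packed into a small ball --- is exactly what you cannot have, and your proposed typicality condition ($|\{X_s^w:w\in S\}|\ge|S|/2$ for most $s$, simultaneously for all $S$) is already incoherent for $|S|=2$, where it would forbid the very meetings you are counting. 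This entropy-versus-probability mismatch is why the paper does not attempt a one-shot union bound: it first proves Theorem \ref{thm: et2}, producing a giant class of size at least $n/6$ after only $O(1/\lambda)$ steps by union-bounding over merely $2^{n/(2t)}$ partitions of \emph{good} classes (classes occupying at least $t$ vertices, so spread-outness is built in and each partition fails with probability $e^{-cn}$ by Lemma \ref{lem: stage2}), and then absorbs the remaining walkers one at a time over a further $\lceil C\lambda^{-1}\log n\rceil$ steps via Corollary \ref{cor: prey} (a union bound over only $n$ walkers, each failing with probability $n^{-2}$). Some such two-scale decomposition, or another mechanism for handling mid-sized clustered $S$, is the missing idea.
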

\begin{maintheorem}
\label{thm: et2}
Let $G$ be a connected $d$-regular $n$-vertex $\lambda$-expander. Then there exist some constants $c,C_{1}>0$ such that with probability at least $1-e^{-cnd^{-4(s+1)}}$, after $s:=\lceil C_{1}/\lambda \rceil$ steps there exists a class of walkers 
of size at least $n/6$.
\end{maintheorem}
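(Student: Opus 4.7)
The plan is to establish at time $s=\lceil C_1/\lambda\rceil$ the existence of a single equivalence class of size $\ge n/6$, with the claimed failure probability. My approach combines a first-moment estimate giving a cluster of linear expected size with a bounded-differences concentration inequality whose per-coordinate Lipschitz constant is controlled by the fact that in $s$ steps a walker only explores a space-time region of volume at most $d^{O(s)}$.

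I first set up the probability space via Poisson stationarity: because $G$ is $d$-regular, the stationary distribution of LSRW is uniform, so the product-Poisson initial measure is preserved in distribution under the joint LSRW dynamics. Equivalently, I regard the population as $N\sim\mathrm{Pois}(n)$ i.i.d.\ walkers, each with a uniform random initial vertex and an independent LSRW trajectory of length $s$; the size of the largest equivalence class at time $s$ is then a function of these $N$ independent per-walker inputs.

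Next I show that this largest class has linear expected size once $C_1$ is chosen large. By the spectral-gap hypothesis and $s\ge C_1/\lambda$, any two independent LSRWs from arbitrary starting vertices meet by time $s$ with probability bounded below by $c(s+1)/n$ for some $c=c(C_1)>0$, using expander mixing to lower-bound the collision sum $\sum_{t\le s}\Pr[X_t=Y_t]$. Thus the meeting graph on walker labels stochastically dominates a supercritical Erd\H os--R\'enyi-type random graph on $N$ vertices with edge density of order $s/n$, whose giant component covers a fraction $\zeta\in(1/3,1)$ of all vertices by the standard Galton--Watson survival computation; enlarging $C_1$ makes the effective offspring mean as large as desired and forces $\mathbb{E}[|\text{largest class}|]\ge n/3$.

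For concentration I apply McDiarmid's bounded-differences inequality to $F=|\text{largest class at time }s|$, viewed as a function of the $N$ independent per-walker inputs. Perturbing one walker's input only affects meetings involving trajectories that pass through a common space-time point, and such trajectories live inside the space-time ball of radius $s$ around the perturbed walker, which contains at most $(s+1)d^s$ sites; a generous accounting then yields a per-coordinate Lipschitz constant of order $d^{2(s+1)}$ (one factor $d^{s+1}$ for direct meetings, one for cascaded effects on cluster sizes through those meetings). McDiarmid accordingly gives
\[
\Pr\!\left[\,F<\tfrac{n}{6}\,\right]\;\le\;\exp\!\bigl(-\,c\,n/d^{\,4(s+1)}\bigr),
\]
matching the claimed bound. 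The main obstacle will be justifying this Lipschitz bound honestly: altering one trajectory can a priori merge two enormous classes through a single new meeting, so a naive argument only gives $\Omega(n)$. I expect the fix to be a two-round sprinkling, using the first $\lfloor s/2\rfloor$ steps to construct a scaffold of small clusters whose sizes depend only on local ($d^{O(s)}$-sized) information, then using the remaining $\lceil s/2\rceil$ steps to glue these clusters together, and deducing concentration for the combined process from concentration at each stage, appropriately conditioned.
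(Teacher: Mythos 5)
Your first-moment step does not go through as stated. A lower bound of order $s/n$ on each pairwise meeting probability does not give stochastic domination of the meeting graph by a supercritical Erd\H{o}s--R\'enyi graph: the edges of the meeting graph are far from independent (all edges incident to a given walker are functions of that one walker's trajectory, so they are strongly positively correlated), and marginal lower bounds on edge probabilities alone say nothing about a giant component --- a random graph that is complete with probability $p$ and empty otherwise has every edge present with probability $p$. Some genuine decoupling is needed. The paper obtains it differently: for each \emph{vertex} $u$ it shows (Lemma \ref{lem: step1}) that with probability at least $(1-e^{-1})-e^{-ct}$ the walkers occupying $u$ at time $s$ have directly met walkers occupying at least $t$ distinct vertices at time $s$; this is proved by conditioning on one such walker's trajectory and using Poisson thinning, reversibility and \eqref{eq: mix} to show the relevant count is a sum of independent Poissons with total mean $\ge 4t$. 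After concentration this yields at most $n/(2t)$ ``good'' classes whose union covers at least $n/2$ vertices --- not yet a single class of size $n/6$.

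Your second gap is the one you flag yourself: $F=|\text{largest class}|$ is not $d^{O(s)}$-Lipschitz in a single walker's input (one new meeting can merge two linear-size classes), and the ``two-round sprinkling'' is left entirely unexecuted, yet that is exactly where the theorem lives. The paper's resolution is to apply Azuma/McDiarmid not to $F$ but to the local functional $\sum_{u}1_{A_{u,t}(s)}$, the number of vertices whose time-$s$ occupants have directly met occupants of at least $t$ vertices; a single walker's trajectory influences only vertices in a ball of radius $2s$, so the increments are at most $d^{2(s+1)}$, which is precisely where the error term $\exp(-cn/d^{4(s+1)})$ comes from. The passage from many medium classes to one class of size $n/6$ is then a union bound over the at most $2^{n/(2t)}$ ways of splitting the $\le n/(2t)$ good classes into two parts each covering at least $n/6$ vertices: each split is ruled out with probability $1-e^{-c_2 n}$ by an $\ell_2$-mixing argument over a further $\lceil C\lambda^{-1}\rceil$ steps (Lemma \ref{lem: stage2}), and choosing $t\ge 1/c_2$ beats the entropy of the union bound. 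Without this (or an equivalent) gluing mechanism, and without a corrected first-moment argument, your proposal does not yield the theorem.
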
 
We refer to a class of walkers (in the SN model on some graph $G=(V,E)
$ at a certain time) of size $\Omega (|V|)$ as a \textbf{giant class}.
We shall deduce Theorem \ref{thm: et1} from Theorem \ref{thm: et2} by bounding $\SC$ by the time it takes the giant class from Theorem \ref{thm: et2} to ``capture" the rest of the walkers. The requirement    $ \lambda \log_{d} n \ge C_{0}  $ in  Theorem \ref{thm: et1} is imposed in order to ensure that the error term $e^{-cnd^{-4(s+1)}} $ from Theorem \ref{thm: et2} is small. 

\subsection{Organization of this work} 
In \S\ref{sec: preliminaries} we present some preliminaries. In \S\ref{s: upper} we prove \eqref{eq: upper}. In \S\ref{s: lower} we prove \eqref{eq: lower} and Theorem \ref{thm: reg}. In \S\ref{s: cycle} we prove Theorem \ref{thm: cycle}. In \S\ref{s: expanders} we study the SN model on expanders and prove Theorems \ref{thm: et1}-\ref{thm: et2}. We conclude with some open problems,  conjectures and concluding remarks in \S\ref{s: open}.

\subsection{Related work and connections to other models}
\label{s:related}
The social network model on infinite graphs is investigated in \cite{Hermon}. The setup studied in \cite{Hermon} is as follows. Given an infinite connected regular graph $G=(V,E)$,  place at each vertex Pois($\la$) walkers performing independent lazy simple random walks on $G$ simultaneously. When two walkers visit the same vertex at the same time they are declared to be
acquainted. It is shown in \cite{Hermon} that when $G$ is vertex-transitive and amenable, for all $\la>0$ $\as$ every pair of walkers will eventually have a path of acquaintances between them. In contrast, it is shown that when $G$ is non-amenable (not necessarily transitive) there is always a phase transition at some $\lac(G)>0$.  General bounds on $\lac(G)$ are given and the case that $G$ is the $d$-regular tree $\mathcal{T}_d $ is studied in more detail (it is shown that $c \le \lac(\mathcal{T}_d)/ \sqrt{d} \le C $). Finally, it is shown that in the non-amenable setup there exists a finite time $t$ such that $\as$ there exists an infinite set of walkers having a path of acquaintances between them by time $t$. We note that the last result is the infinite setup counterpart of our Theorem \ref{thm: et2}.

\medskip

We believe that, in spirit, the results in this paper should be true also for some other particle systems involving independent random walks. The  $A+B \to 2B$ family of models \cite{kesten2005spread,kesten2008shape}, often interpreted as models for a spread of an epidemic/rumor, are defined by the following rule: there are type $A$ and $B$ particles occupying a graph $G$, say with densities $\la_A,\la_B>0$. They perform independent SRW with holding probabilities $p_A \in [0,1]$ and $p_B \in [0,1)$. When a type $B$ particle collides with a type $A$ particle, the latter transforms into a type $B$ particle.
The \emph{frog model }is the particular case of  the above dynamics in which the type $A$ particles are immobile ($p_A=1$) and $\la_A=\la_B=\la $. 

\medskip

We now consider a variant of the above model which is intimately related to the SN model.  
Consider the case in which  initially only the particles at some vertex $o $ are of type $B$ and an additional  $B$ particle is planted at $o$ (this is done to ensure that there is at least one $B$ particle). Consider the case that each $B$ particle performs only $t$ steps of its walk before vanishing (after which it cannot become reactivated again). We call $t$ the lifetime of the $B$ particles. We say that the process dies out when there are no $B$ particles left.  One  then defines the \emph{susceptibility}, $\mathcal{S}(G) $ as the minimal lifetime which is sufficient so that all particles are transformed into type $B$ particles before the process dies out (to make this definition formal, we think of the particles as first picking the infinite trajectory of their random walks, but then, once transforming into a $B$ particle, performing only $t$ additional steps of their walk). In the interpretation of the model as a model for a spread of a rumor/epidemic the susceptibility is meant to capture ``how long should a virus live in order to infect the entire population" or "how interesting should a rumor be, so that eventually everybody will hear it". Similarly, one can consider the minimal lifetime required so that every vertex is visited by at least one $B$ particle.

In \cite{hermon2016frogs,BHM} parallel results to Theorems \ref{thm: reg},\ref{thm: cycle},\ref{thm: et1} and \ref{thm: et2}  are proved for the above variant in the case of the frog model, where the susceptibility replaces the social connectivity time. We strongly believe that (certain variants of) Theorems \ref{thm: reg},\ref{thm: cycle},\ref{thm: et1} and \ref{thm: et2} should hold even when the $A$ particles are mobile. Moreover, we have the following conjecture concerning an analog of Theorem \ref{thm: 1}.

\begin{conjecture}\label{con:polylog} Consider the aforementioned variant of the $A+B \to 2B$ model with $\la_A=\la_B=\la$ and some fixed  $p_A \in [0,1]$ and $p_B \in (0,1)$.  Then there exist some $C_{d,\la},\ell>0$, such that for every sequence of finite connected graphs $G_{n}=(V_{n},E_{n})$ with $|V_n| \to \infty$ of maximal degree at most $d$, we have that \[\lim_{n \to \infty} \mathbb{P}_{\la}[\mathcal{S}(G_n) \le C_{d,\la}  \log^{\ell} |V_n| ]=1.\]
\end{conjecture}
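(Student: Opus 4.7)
The plan is to extend the proof strategy behind Theorem \ref{thm: 1} in a way that respects the time-ordering of infections and the finite lifetime of $B$-particles. I would first reduce the problem to two phases; write $n := |V_n|$ and let $\xi := 1_{G_n \text{ not regular}}$. Phase I (seed growth): starting from the $\mathrm{Pois}(\la)+1 \ge 1$ seed $B$-particles at $o$, show that within time $T_1 = O(\log^{\ell_1} n)$ the number of currently-alive $B$-particles reaches some polynomial threshold $n^{\beta}$, with probability $\ge 1-o(1)$. Phase II (saturation): once there are $\ge n^{\beta}$ alive $B$-particles roughly distributed according to $\pi$, all remaining $A$-particles become $B$ within additional time $T_2 = O(\log^{\ell_2} n)$. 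The common lifetime $t = \Theta(\max(T_1, T_2))$ then gives the susceptibility bound.

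For Phase II, I would adapt the proof of \eqref{eq: upper} by treating the cloud of alive $B$-particles as a Poisson-like cloud of walkers of effective density $n^{\beta - 1}$. The proof of \eqref{eq: upper} constructs a giant acquaintance class via mixing together with successive meetings; carrying out the analogous steps at sparser density costs only an extra polylogarithmic factor, provided each $B$-particle lives long enough to participate in at least one such meeting. To address time-ordering, one replaces the non-ordered path of acquaintances by a tree of actual infection events; this is plausible because during the polylogarithmic window new $B$-particles are born continuously and thereby keep the cluster ``refreshed''.

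For Phase I, I would run a bootstrapping argument: show that for some $T_0 = O(\mathrm{polylog}\, n)$ and some $c_0 > 0$, given $k$ alive $B$-particles at reasonably well-spread positions, with probability $\ge c_0$ the $B$-count at time $T_0$ later is at least $\min(2k, n^{\beta})$. Iterating $O(\log n)$ rounds (with the lifetime budget used to regenerate randomness between rounds) yields the desired threshold. Each doubling step is proved by arguing that a typical $B$-particle, over $T_0$ steps, has probability $\Omega(1/\mathrm{polylog}\, n)$ of meeting a fresh $A$-particle, and that the meeting events across different $B$-particles are approximately independent via a second moment or Chen--Stein estimate.

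The main obstacle is the very first doubling step. In a graph of bounded degree but without spectral or isoperimetric control, $o$ may sit in a low-conductance pocket from which the initial $B$-particle cannot efficiently meet others. A workable approach is to exploit the Poisson cloud at $o$ to guarantee $\ge 2$ seeds with constant probability, and to give the planted $B$-particle a polylogarithmic lifetime so that its walk mixes into a ``typical'' neighborhood before any infection is required. Controlling the per-step infection rate in terms of only $d$ and $\la$, without access to a spectral gap as in the expander case or an explicit local structure as for the cycle, is where the core technical difficulty lies, and is likely the reason the paper leaves this as a conjecture.
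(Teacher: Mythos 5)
The statement you are proving is Conjecture \ref{con:polylog}, which the paper explicitly leaves open; there is no proof in the paper to compare against, and your proposal is a strategy sketch rather than a proof. Beyond the difficulty you yourself flag at the end, there are two concrete gaps. First, your Phase II is false as stated: you claim that once $n^{\beta}$ particles are of type $B$, \emph{all} remaining $A$-particles are converted within an additional $O(\log^{\ell_2}|V_n|)$ units of real time, and that the susceptibility is then $\Theta(\max(T_1,T_2))$. On the $n$-cycle this cannot happen: information travels at most distance $T$ in time $T$, so converting a particle at distance $n/2$ from the initially infected region requires real time $\Omega(n)$, even though the susceptibility (the per-particle \emph{lifetime}) is only $\Theta(\log^2 n)$ there. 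The conjecture bounds the lifetime, not the total duration of the epidemic, which in general must be allowed to run for polynomially many generations; any correct argument has to propagate the infection through $\mathrm{poly}(n)$ generations while using only a polylogarithmic lifetime per generation, and your two-phase decomposition does not accommodate this.

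Second, the reduction of Phase II to the machinery behind \eqref{eq: upper} does not go through, because that machinery rests on the occupation measure of \emph{all} walkers being an exact product-Poisson stationary field at every time (Fact \ref{fact: thinning}), which is what makes the $(\delta,t)$-balanced property checkable by large deviations. The sub-population of alive $B$-particles is obtained by conditioning on the infection history: it is neither Poisson nor well spread (on the cycle it is essentially an interval around the source), so treating it as ``a Poisson-like cloud of effective density $n^{\beta-1}$'' is unjustified precisely in the worst-case graphs the conjecture targets. Relatedly, the paper's argument exploits that $\simt$ ignores the temporal order of meetings, so classes can merge ``backwards in time''; in the $A+B\to 2B$ model an $A$-particle contributes nothing until the (time-ordered) moment it is actually hit, and your one-sentence replacement of acquaintance paths by ``a tree of actual infection events'' is exactly the missing idea, not a step that can be waved through. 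Your Phase I doubling also needs more than a constant success probability per round once failures consume lifetime budget (compare the supermartingale bookkeeping in Lemma \ref{lem: mainlem}), and the hypothesis that the $k$ alive $B$-particles are ``reasonably well-spread'' is unsubstantiated. In short, the proposal correctly identifies the shape of an attack but none of the steps that make this conjecture hard are actually carried out.
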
  
   
\section{Preliminaries and additional notation}
\label{sec: preliminaries}
\subsection{Reversibility,  stationarity of the occupation measure and independence of the number of walkers performing different walks.}
Let $G=(V,E)$ be an Eulerian digraph. Let $P$ be the transition kernel of LSRW on $G$. We denote by $P^t(u,v)$ the $t$-steps transition probability from $u$ to $v$. We now establish a certain independence property for walks in $G$, which in particular implies a certain stationarity property of the SN model.  Recall that $\pi_v:=d_v/2|E|$ and $\bar \pi=|V|\pi $. For LSRW on a graph $G$ \textbf{reversibility} is the property that for every $v,u \in V$ and $t \ge 0$, \[\pi_vP^t(v,u)=\pi_uP^t(u,v).\]  A \textbf{walk} of length $k$ in $G$ is a sequence of $k+1$ vertices $(v_0,v_1,\ldots,v_{k})$ such that for all $0 \le i <k$ either $v_i=v_{i+1}$ or $(v_{i} , v_{i+1}) \in E $ (or for a graph $\{v_{i} , v_{i+1}\} \in E $). Let $\Gamma_k$ be the collection of all walks of length $k$ in $G$.
 For a walk $\gamma=(\gamma_0,\ldots,\gamma_k) \in \Gamma_{k}$ for some $k \ge 1$, let \[p(\gamma):=\prod_{i=0}^{k-1}P(\gamma_i,\gamma_{i+1}) \quad \text{and} \quad q(\gamma):= \bar \pi_{\gamma_0}p(\gamma).\] Note that $p(\gamma)$ is precisely the probability that a  walker whose starting position is $\gamma_0$  performed the walk $\gamma$ (i.e.~her position at time $i$ is $\gamma_i$ for all $i \le k$). Let $\gamma' $ be the reversal of $\gamma $ (i.e.~$\gamma'_{i}=\gamma_{|\gamma|-i} $ for all $ i \le |\gamma| $). Reversibility is equivalent to the following condition:
$$q(\gamma)=q(\gamma'), \quad \text{for every walk }\gamma . $$ 
We denote the number of walkers who performed a walk $\gamma$ by $X_{\gamma}$. We denote
the number of walkers whose position at time $t$ is $v$ by $Y_{v}(t)$. 
Since $\pi$ is the stationary distribution of the walks, 
$\mathbb{E}[Y_v(t)]=  \sum_{u
\in V} \bar \pi_u P^t(u,v) = \bar \pi_{v}=\mathbb{E}[Y_v(0)]$,  for every vertex $v$ and time $t>0$. Thus,
the following fact follows easily from Poisson thinning.   
\begin{fact}
\label{fact: thinning}
Let $G$ be an Eulerian digraph. Then for all $t>0$ and $\gamma \in \Gamma_t$, $X_{\gamma} \sim \Pois(q(\gamma)) $.  Moreover, $(X_{\gamma})_{\gamma \in \Gamma_t} $ are independent for each fixed $t>0$. Consequently, for each fixed $t>0$ we have that $(Y_v(t))_{v \in V} $ are independent and that $Y_{v}(t) \sim \mathrm{Pois}(\bar \pi_v) $ for all $v \in V $.
\end{fact}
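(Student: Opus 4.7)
The proof is a direct application of the Poisson thinning (or marking) theorem, so the plan is essentially to set things up carefully and verify that the pieces combine correctly.

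First I would fix $t>0$ and condition on the initial configuration. By hypothesis, $(N_u)_{u \in V}$ are independent with $N_u \sim \Pois(\bar\pi_u)$. Each of the $N_u$ walkers starting at $u$ then picks its trajectory up to time $t$ independently according to the LSRW law on $G$, so it follows a particular walk $\gamma \in \Gamma_t$ with $\gamma_0=u$ with probability $p(\gamma)=\prod_{i=0}^{t-1}P(\gamma_i,\gamma_{i+1})$. Applying Poisson coloring to the Poisson point process of walkers starting at $u$, with the colors indexed by walks $\gamma$ with $\gamma_0=u$, yields that the counts $(X_\gamma)_{\gamma:\gamma_0=u}$ are \emph{independent} and satisfy $X_\gamma \sim \Pois(\bar\pi_u p(\gamma))=\Pois(q(\gamma))$. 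Since the initial Poisson variables $(N_u)_{u \in V}$ are independent and walkers started from different vertices evolve independently, combining across $u$ yields mutual independence of the full family $(X_\gamma)_{\gamma \in \Gamma_t}$, each with the claimed Poisson distribution.

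Next I would deduce the statement about $Y_v(t)$. Observe that the endpoint map $\gamma \mapsto \gamma_t$ partitions $\Gamma_t$, and by definition
\[
Y_v(t)=\sum_{\gamma \in \Gamma_t:\,\gamma_t=v} X_\gamma.
\]
Being a sum of independent Poisson variables, $Y_v(t)$ is Poisson with mean
\[
\sum_{\gamma:\gamma_t=v} q(\gamma)=\sum_{u \in V}\bar\pi_u \sum_{\gamma:\gamma_0=u,\gamma_t=v} p(\gamma)=\sum_{u \in V}\bar\pi_u P^t(u,v)=\bar\pi_v,
\]
where the last equality uses that $\pi$ is stationary for $P$ (which holds on any Eulerian digraph, since $\pi_u=d_u/(2|E|)$ is invariant). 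Finally, the partition of $\Gamma_t$ by endpoints means that the sums defining $Y_v(t)$ for different $v$ involve disjoint subfamilies of the independent random variables $(X_\gamma)_{\gamma \in \Gamma_t}$, so $(Y_v(t))_{v \in V}$ are independent.

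There is no real obstacle here beyond bookkeeping: the only point to be careful about is that reversibility itself is not used in the statement, only stationarity of $\pi$ for $P$, which is guaranteed by the Eulerian hypothesis. Reversibility enters elsewhere in the paper (via the identity $q(\gamma)=q(\gamma')$) and is worth contrasting explicitly with what is needed here. If anything required a second look, it would be verifying that Poisson thinning is being applied to a legitimately Poissonian ``source'' (the independent initial counts plus independent walk choices), which it clearly is.
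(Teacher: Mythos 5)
Your proof is correct and matches the paper's approach exactly: the paper derives this fact by noting $\sum_u \bar\pi_u P^t(u,v)=\bar\pi_v$ via stationarity and then invoking Poisson thinning, which is precisely what you spell out (with the thinning applied per starting vertex and the $Y_v(t)$ obtained by summing the independent $X_\gamma$ over disjoint families of walks). Your remark that only stationarity of $\pi$, not reversibility, is needed here is also accurate.
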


Next we note that if the average degree of $G=(V,E)$ is $d$, then for all $u ,v\in V$
\begin{equation}
\label{eq: ratiod}
 \bar \pi_v=\sfrac{|V|d_v}{|E|} =d_v/d , \quad  \bar \pi_v / \bar \pi_u =d_{v}/d_{u}  .
\end{equation}
We shall use  the following concentration estimate.  Let $Y \sim \mathrm{Pois}(\mu)$. Then 
\begin{equation}
\label{eq: poisconcentration}
\forall a \ge 0, \quad \max \{ \Pr[Y \ge \mu + a], \Pr[Y \le \mu - a]  \} \le \exp (-\sfrac{a^2}{2(\mu+a/3)} ).
\end{equation}
By taking an appropriate limit, this follows  from Bernstein's inequality which says that if $\xi_1,\ldots,\xi_m$ are independent Bernoulli r.v.'s, $Y=\sum_{i \in [m]}\xi_i $, $\mu=\mathbb{E}[Y]$ and $\sigma^2= \mathrm{Var}(Y) $, then
\begin{equation}
\label{e:berns}
\forall a \ge 0, \quad \max \{ \Pr[Y \ge \mu + a], \Pr[Y \le \mu - a]  \} \le \exp (-\sfrac{a^2}{2(\sigma^2+a/3)} ).
\end{equation}

Another concentration estimate which we shall use in the proof of some of the lower bounds on $\SC$ is the McDiarmid''s inequality, which we now state. Let $f: \mathcal{X}^n \to \R $. Let $c_i:=\sup|f(\mathbf{x} )-f(\mathbf{y} )| $, where the supremum is taken over all $\mathbf{x}:=(x_1,\ldots,x_n),\mathbf{y}:=(y_1,\ldots,y_n) \in \mathcal{X}^n $ such that $x_j=y_j $ for all $j \neq i $.  Let $X_1,\ldots,X_n $ be i.i.d.\ $\mathcal{X}$ valued r.v.'s. Then
\begin{equation}
\label{e:Mcstatement}
\Pr[f(X_1,\ldots,X_n)< \mathbb{E}[f(X_1,\ldots,X_n)] - \eps ] \le \exp (-\sfrac{2\eps^2}{\sum_{i \in [n]}c_i^2} ). \end{equation} 
\subsection{Negative association}
Let $G=(V,E)$ be a finite Eulerian digraph. The goal of this subsection is to formulate the intuitive notion that given a certain deterministic initial configuration of walkers, a time $t>0$ and a vertex $x$, the more walkers there are at $x$ at time $t$, the less walkers we expect to find at the other vertices of $G$. 

 The partial order $\preceq$ on $\Z_{+}^V $ is defined as follows. For $a,b \in \Z_{+}^V $ we have that $a \preceq b  $ if $a(v) \le b(v)$ for all $v$. We say that a function $f : \Z_{+}^V \to \R   $ is \emph{increasing} (resp.\ \emph{decreasing}) if $f(a) \le f(b) $ whenever  $a \preceq b $ (resp.\ whenever  $b \preceq a $, equivalently, $-f$ is increasing).   

 Let $\mathbf{X}_1:=(X_{1}(v))_{v \in V }$ and $\mathbf{X}_2:=(X_{2}(v))_{v \in V } $ be random elements in $\Z_+^V $. Denote their laws by $\mu_1$ and $\mu_2$, respectively.  We say that (the law of) $\mathbf{X}_1$ \emph{stochastically dominates} (the law of) $\mathbf{X}_2$ if  $\int f d \mu_2 \le \int f d \mu_1 $ for all increasing $f$. We say that $f:\Z_{+}^V \to \R $ depends only on the co-ordinates in $A$ if $f(z)=f(z') $ whenever $z(a)=z'(a) $ for all $a \in A$. We say that $\mathbf{Y}:=(Y(v))_{v \in V } $ are \emph{negatively associated} if for every two disjoint sets $A,B \subset V $ and any two increasing functions $f,g $ such that the former depends only on the co-ordinates in $A$ and the latter
depends only on the co-ordinates in $B$ we have that $\mathbb{E}[f(\mathbf{Y})g(\mathbf{Y})] \le \mathbb{E}[f(\mathbf{Y})] \mathbb{E}[g(\mathbf{Y})]$. It is immediate from the definition that if  $(Y(v))_{v \in V } $ are negatively associated then: \begin{itemize}
\item[(1)] The law of $(Y_u)_{u:u \neq v } $ stochastically dominates its conditional law given $Y_v >0 $ (take above $A=\{v\},B=V \setminus\{v\} $ and $f=\mathbf{1}_{\{Y_v \ge 1\} }$). 
\item[(2)]$(Y(v))_{v \in V }$ are negatively correlated, i.e.\  $\mathbb{E}[Y_{v}Y_{u}] \le \mathbb{E}[Y_{v}] \mathbb{E}[Y_{u}]$ for all $u \neq v$.
\item[(3)] If $V=[n]$ then for all increasing (resp.\ decreasing) $f_1,\ldots f_n: \Z_+ \to \R $ we have that
\begin{equation}
\label{e:negaff} \mathbb{E}[\prod_{i \in [n]}f_i(Y_i) ] \le  \prod_{i \in [n]}\mathbb{E}[f_i(Y_i) ]. \end{equation} 

\end{itemize}
The following result is due to Dubhashi and Ranjan \cite[Theorem 13]{NA}.
 \begin{fact}
\label{cor:dommultinomial}
Let $X_1,\ldots,X_m$ be independent $[n]$ valued random variables. For $i \in [n]$ let $Y_i:=\sum_{j \in [m] } \mathbf{1}_{\{X_{j} =i\} }  $ be the number of $X_{\ell}$'s which take the value $i$. Then $(Y_i)_{i \in [n] }$ are negatively associated.
\end{fact}

\section{A general upper bound on $\SC$ - proof of \eqref{eq: upper}}
\label{s: upper}
In this section we prove \eqref{eq: upper}, which is the harder and more interesting part of our main result, Theo

rem \ref{thm: 1}. Throughout the section we fix some connected $n$-vertex Eulerian digraph $G=(V,E)$ of average degree $d$ and  set \begin{equation}
\label{e:t}
t:=t_{d,|V|}:=  \lceil Cd^{1_{G \text{ is not regular} }}   \log  n \rceil^{2} 
\end{equation}
for some sufficiently large absolute constant $C$ to be determined later.
\subsection{An overview of the proof}
We first describe an easy auxiliary baby model (\S\ref{s:baby}) and analyze it (Lemma \ref{lem: mainlem}). We then (\S\ref{s:redbaby}) define the notions of a marginally $(\alpha,t)$-merging configuration, a globally  $(\alpha,p,t)$-merging configuration and a $(\delta,t)$-balanced configuration (Definition \ref{def:ballanced}) and explain how the analysis of the SN model can be reduced to that of the auxiliary baby model, assuming one of the first two properties hold at each given time $\whp$ (Proposition \ref{p:derof1.1}). Finally, in \S\ref{s:checkingbalance} we show that at each given time  $\whp$ the configuration is $(1/8,t)$-balanced, and that this implies that it is marginally $(\alpha,t)$-merging for some $\alpha=\alpha_d$.  

\subsection{Auxiliary baby models}
\label{s:baby}
Consider an arbitrary discrete-time coalescent process starting with $m$ distinct classes. Define the coalescence time $\mathrm{CT}$ as the minimal time in which there is only one class (if there is no such time it is defined to be $\infty$). We now describe several simple coalescence models, and derive upper bounds on the coalescence time in each model.  We intentionally start with two simple models, each involving a deterministic condition, and then work our way towards a slightly more complicated model which is more faithful to the actual details of our argument for the SN model. 

\medskip

Clearly,  if prior to $\mathrm{CT}$ in every time unit every class merges with at least one other class, then deterministically, it must be the case that $\mathrm{CT} \le  \lceil \log_2 m \rceil $. We now generalize this simple model, while introducing some terminology which will be used throughout the section.
\begin{definition}
\label{def:round}
Fix some $t>0$. Let $t_k:=kt$. We call the time interval $(t_k,t_{k+1}]$ the $(k+1)$\textbf{\emph{-th round}}. We say that the $(k+1)$-th round is a $p$-\textbf{\emph{merging round}} if either $t_k \ge \mathrm{CT}$ (this case is taken for technical reasons) or  $t_k< \mathrm{CT}$ and at least a $p$-fraction of the classes (w.r.t.~time $t_k$) have merged with at least one other class by the end of time $t_{k+1}$.
\end{definition}
The last definition makes sense for the SN model, as well as for a general coalescence model.
Note that if the number of classes at the beginning of a $p$-merging round is $\ell>1$, then the number of classes at the end of the round is at most $(1-p/2)\ell$. Assume that initially there are at most $m$ classes. If all rounds are $p$-merging then,  deterministically, $\mathrm{CT}\le \lceil C_p  t \log m \rceil $ (for concreteness, $C_p:=1/\log (\frac{2}{2-p})$). More generally, even if not all rounds are $p$-merging, there can be at most $\lceil C_p \log m \rceil $ $p$-merging rounds prior to $\mathrm{CT}$. This motivates the following simple model in which each round is $p$-merging with probability at least $\alpha$, regardless of the history. We intentionally keep the description and analysis of this model somewhat informal, deferring a more rigorous statement to Lemma \ref{lem: mainlem}.

\medskip

\textbf{Simplified baby model:} Start with $m$ classes. At each round flip a  coin with heads probability at least $\alpha$ (i.e.~the heads probability in each round may depend on the history up to that time, but is guaranteed to be at least $\alpha$). If it lands on heads then the round is $p$-merging. The list of pairs of classes which merge with one another in each round (given the evolution of the model up to the beginning of the round and the result of the coin flip corresponding to the round) is determined according to some arbitrary predetermined rule (satisfying that whenever the coin lands on heads the requirement of being $p$-merging is satisfied by the list of pairs of merged classes), perhaps using some additional independent source of randomness. The predetermined rule can be thought of as one decided by  an adversary.

\medskip

It is intuitively clear that there exists $C_{\alpha,p}$ of the form $C_{\alpha,p} = K_{1}C_p/\alpha \le  \frac{K_{2}}{\alpha p}$ (for some sufficiently large absolute constants $K_{1},K_2$, where $C_p$ is as above) such that the probability that the total number of rounds exceeds $L=L_{m,\alpha,p}:=\lceil C_{\alpha,p}\log m \rceil$ tends to 0 as $m \to \infty$. Informally, one can argue that the number of $p$-merging rounds  in the first $L$ rounds, stochastically dominates the $ \mathrm{Bin}(L,\alpha)$ distribution and then  apply an appropriate large deviation estimate in order to bound $\Pr[ \mathrm{Bin}(L,\alpha) < C_p \log m] $.

\medskip
 
 \textbf{The auxiliary baby model:} We now consider another small variation of the last model by introducing an error term $\epsilon_m$. We refer to this variation as the \emph{auxiliary baby model}. Consider the case that at each round with probability we only have probability $ \ge  1-\epsilon_m$, for some $\epsilon_m=o(1/\log m)$, that the model evolves according to the above description (i.e.~like the ``simplified baby model"), while with the complement probability it stops evolving (i.e.\ the coalescence process is terminated). It is intuitively clear that the conclusion from the previous paragraph remains valid, with an additional error term of $\eps_m L=o(1)$ for the probability that the number of rounds exceeds $ L$. We will show that the analysis of the SN model on an Eulerian graph $G=(V,E)$ of average degree $d$ can be reduced to that of the auxiliary baby model (for a suitable choice of parameters). 

\medskip
 
  Observe that above it was sufficient to consider the number of classes at the end of each round in order to bound $\mathrm{CT} $ from above.  This motivates the following lemma, which formulates rigorously the assertions made in the previous paragraphs. One should think of $J_k+1$ below as the number of classes at the end of the $k$-th round, and so $T_{0}:=\inf \{k: J_k=0 \}$ corresponds to the coalescence time $\mathrm{CT}$. 
\begin{lemma}
\label{lem: mainlem}
Let $J_k$ be a non-increasing $\Z_+$-valued random process, measurable w.r.t.~filtration $(\F_k)_{k \ge 0}$, with $J_0 \le m$, satisfying that for some $0<\alpha,p,\eps_m<1$, for each $k$ there exists some events $A_k \in \F_k$ with $\Pr(A_k) \ge 1-\eps_m$ satisfying that
\begin{equation}
\label{eq: conditionalexp}
 \mathbb{E}[1_{\{J_{k+1} \le (1-p/2) J_k \}}1_{A_k}  \mid \F_k ] \ge \alpha1_{A_k}. 
\end{equation}
Let $T_0:=\inf \{k:J_k=0 \}$. Then there exists some constant $C_{\alpha,p} \le K/(\alpha p)$ such that 
$$\Pr[T_0 > \lceil C_{\alpha,p}\log m \rceil] \le m^{-2}+\eps_m (\lceil C_{\alpha,p}\log m \rceil +1). $$ 
\end{lemma}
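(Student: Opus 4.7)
The plan is to show that whenever sufficiently many \emph{merging rounds}---rounds in which $J_{k+1} \le (1-p/2)J_k$---occur among the first $L := \lceil C_{\alpha,p} \log m \rceil$ rounds, $J_L$ must equal $0$ deterministically; and then to use stochastic domination together with a Chernoff bound to show this happens with high probability. Set $B_k := \mathbf{1}_{\{J_{k+1} \le (1-p/2) J_k\}}$ and let $s$ be an integer with $C_p \log m < s \le C_p \log m + 1$, where $C_p := 1/\log(2/(2-p))$. Since $J_k$ is non-increasing and $J_0 \le m$, iterating the defining inequality gives $J_L \le (1-p/2)^{\sum_{k<L} B_k} m$, which is strictly less than $1$ whenever $\sum_{k<L} B_k \ge s$. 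As $J_L$ is $\Z_+$-valued, we obtain the deterministic inclusion $\{\sum_{k<L} B_k \ge s\} \subseteq \{T_0 \le L\}$.

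Next I would eliminate the event-dependence of the conditional lower bound on $B_k$. Let $\mathcal{A} := \bigcap_{k=0}^{L-1} A_k$, so by the union bound $\Pr[\mathcal{A}^c] \le L\eps_m$. On $\mathcal{A}$ the hypothesis gives $\mathbb{E}[B_k \mid \F_k] \ge \alpha$ for every $k<L$. I would enlarge the probability space and introduce, independently of everything, iid $\mathrm{Bernoulli}(\alpha)$ variables $(\zeta_k)_{k<L}$, then define $\widetilde B_k := B_k \mathbf{1}_{A_k} + \zeta_k \mathbf{1}_{A_k^c}$. Since $A_k \in \F_k$, this yields $\mathbb{E}[\widetilde B_k \mid \F_k] \ge \alpha$ everywhere, and a standard quantile coupling (one independent uniform per round) then produces iid $\mathrm{Bernoulli}(\alpha)$ variables $\xi_k$ with $\xi_k \le \widetilde B_k$ almost surely. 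On $\mathcal{A}$ we have $\widetilde B_k = B_k$, so
\begin{equation*}
\Pr[T_0 > L] \le \Pr[\mathcal{A}^c] + \Pr\Big[\sum_{k<L} B_k < s,\ \mathcal{A}\Big] \le L\eps_m + \Pr\big[\mathrm{Bin}(L,\alpha) < s\big].
\end{equation*}

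To finish I would choose $C_{\alpha,p}$ large enough that $s \le \alpha L / 2$ and $e^{-\alpha L/8} \le m^{-2}$; the first reduces (for $m \ge 2$) to $C_{\alpha,p}$ bounded below by a constant multiple of $(C_p+1)/\alpha$, and the second to $C_{\alpha,p} \ge 16/\alpha$. The usual lower-tail Chernoff bound then gives $\Pr[\mathrm{Bin}(L,\alpha) \le \alpha L/2] \le e^{-\alpha L/8} \le m^{-2}$. Since $C_p = 1/\log(2/(2-p)) \le 2/p$ on $(0,1]$, taking $C_{\alpha,p} = K/(\alpha p)$ for an absolute constant $K$ suffices, as required.

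The only delicate step is the coupling: the $B_k$ are not Bernoulli variables with conditional success probability exactly $\alpha$, only with a lower bound, and only on the events $A_k$. Both modifications---first overwriting $B_k$ on $A_k^c$ to remove the event-dependence, then applying the quantile coupling to obtain iid dominated Bernoullis---are needed. Everything else is deterministic bookkeeping and a textbook Chernoff estimate.
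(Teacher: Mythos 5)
Your proof is correct, but it takes a genuinely different route from the paper's. The paper proves Lemma \ref{lem: mainlem} by killing the process at the first failure of the $A_k$'s (setting $I_k:=J_k$ for $k<\tau$ and $I_k:=0$ afterwards, with $\tau$ the first index where $A_k$ fails), observing that \eqref{eq: conditionalexp} together with monotonicity makes $M_k:=(1-\alpha p/2)^{-k}I_k$ a supermartingale, and then applying Markov's inequality to $\Pr[I_L\ge 1]\le \mathbb{E}[I_L]\le (1-\alpha p/2)^{L}m$; the $\eps_m$ term comes from the same union bound over $\cup_k A_k^c$ that you use. You instead formalize the binomial-domination heuristic that the paper only sketches informally in its discussion of the ``simplified baby model'': count merging rounds, overwrite $B_k$ on $A_k^c$ by independent $\mathrm{Bernoulli}(\alpha)$ noise to get an adapted sequence with conditional success probability at least $\alpha$ everywhere, dominate the partial sums by $\mathrm{Bin}(L,\alpha)$ via a quantile coupling, and finish with Chernoff. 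Both arguments are sound and yield $C_{\alpha,p}=O(1/(\alpha p))$; your constant-chasing ($s\le \alpha L/2$, $e^{-\alpha L/8}\le m^{-2}$, $C_p\le 2/p$) checks out, and your error term $L\eps_m$ is in fact slightly sharper than the stated $(L+1)\eps_m$. The trade-off is that the paper's route needs only Markov's inequality and no enlargement of the probability space, whereas yours requires correctly handling the two-stage modification of the $B_k$'s and the standard (but not entirely free) domination lemma for adapted Bernoulli sequences --- which you identify as the delicate step and treat adequately. In exchange, your argument tracks the number of merging rounds explicitly, which matches the way the rounds are used elsewhere in \S~\ref{s: upper}.
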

The proof of Lemma \ref{lem: mainlem} follows by noting that by \eqref{eq: conditionalexp} $M_k:=(1-\frac{1}{2} \alpha p )^{-k}J_{k}1_{\cap_{i \in [k]}A_i}$ is a supermartingale and $\Pr[T_0>k]=\mathbb{E}[J_{k}1_{\cap_{i \in [k]}A_i}]+\Pr[\cup_{i \in [k]}A_i^c] \le (1-\frac{1}{2} \alpha p )^{k}\mathbb{E}[M_0]+\eps_m k $. The details are given in the appendix in \S\ref{s:proofl4.2}.

\subsection{Reducing the SN model to the baby model via merging configurations}
\label{s:redbaby}
We are interested in reducing the SN model to the auxiliary baby model.  Recall that $Y_v(s)$ is the number of walkers at vertex $v$ at time $s$. It is beneficial to consider a notion of ``independence of the history" which depends only on $(Y_v(s))_{v \in V}$, whose distribution is stationary and hence convenient to work with, rather than also on the identity of the classes of walkers at time $s$, which might have a complex dependency structure.   
This motivates the following key definitions.
\begin{definition}
\label{def:ballanced}
\begin{itemize}
Let $\mathbf{Y}:=(Y_v)_{v \in V} \in \Z_{+}^V$. We identify $\mathbf{Y}$ with a configuration of walkers in which for each $v \in V$ there are initially $Y_v$ particles at $v$. The walkers then perform a LSRW on $G$ of length $t$, where $t$ is as in \eqref{e:t}.
 \item
Let $\mathcal{P}$ be a partition of the walkers into (disjoint) sets  $\mathcal{A}_1,\ldots,\mathcal{A}_m $ with $m \ge 2$.
We say that $\mathcal{P}$  \textbf{\emph{respects vertices}} if walkers which initially occupy the same vertex all belong to the same set in the partition (i.e.~for all $v$, each $\mathcal{A}_i$ contains either all $Y_v$ walkers initially occupying $v$ or none of them).
\item
We say that the configuration $\mathbf{Y}$ is \textbf{\emph{marginally}} $(\alpha,t)$-\textbf{\emph{merging}}, if the following holds. For every vertex-respecting partition of the walkers into sets, $\mathcal{A}_1,\ldots,\mathcal{A}_m $ with $m \ge 2$,  for every $\mathcal{A}_i$ which contains less than half of the walkers, the probability that at least one walker from $\mathcal{A}_i$  will meet some walker not belonging to $\mathcal{A}_i$ by time $t$ is at least $\alpha$. We  say such a configuration is \emph{\textbf{globally}} $(\alpha,p,t)$-\emph{\textbf{merging}} if for every vertex-respecting partition of the walkers into sets, $\mathcal{A}_1,\ldots,\mathcal{A}_m $ with $m \ge 2$, we have that with probability at least $p $, there are at least $\alpha (m-1) $ $i$'s such that at least one walker from $\mathcal{A}_i$   met by time $t$ some walker not belonging to $\mathcal{A}_i$.   

 \item We say that the configuration $\mathbf{Y}$ is $(\delta,t)$-\textbf{\emph{balanced}}, if for all $v \in V$ we have that
\begin{equation}
\label{eq: defbalanced}
\sum_{u \in V}Y_uP^t(u,v) \ge (1-\delta)\bar \pi_v \quad \text{(recall that $\bar \pi_v:=|V|\pi_v$)},
\end{equation}
and
\begin{equation}
\label{eq: defbalanced2}
\max_{u \in V} Y_u /d_u \le \log |V|.
\end{equation}
\item We say that the configuration $\mathbf{Y}$ is \textbf{fully} $(\delta,t)$-\textbf{\emph{balanced}},
if it is  $(\delta,t)$-balanced and in addition $\sum_{u \in V}Y_uP^t(u,v) < (1+2\delta)\bar \pi_{v} $ for all $v \in V$.
\end{itemize}
\end{definition} 
We believe that some of the ideas from the proof of \eqref{eq: upper} and in particular the notion of balanced configurations can be useful for other particle systems. For instance, the approach taken in \cite{exclusion}, where the mixing time of the exclusion process on regular graphs is bounded from above in terms of the mixing of independent random walks, was greatly inspired by the proof of \eqref{eq: upper}.

Throughout, we take $\delta $ to be some constant in $(0,1/8]$. We now record the fact that requirement \eqref{eq: defbalanced2} holds $\whp$  when $(Y_u)_{u \in V} $ are independent and  $Y_u \sim \Pois(\bar \pi _u)$.
\begin{lemma}
\label{lem:maxPois}
Let $(Y_u)_{u \in V} $ be independent and  $Y_u \sim \Pois(\bar \pi _u)$. Then
\[\Pr[\max_{u \in V} Y_u /d_u > \log |V| ] \le \sum_{u \in V }C \left(\frac{e}{\log |V|} \right)^{d_u \log |V| } . \]
\end{lemma}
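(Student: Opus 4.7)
The plan is to control $\Pr[Y_u/d_u > \log|V|]$ for each single vertex $u$ via a standard Chernoff bound for the Poisson distribution, and then to finish with a union bound over $u \in V$.

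First, recall from \eqref{eq: ratiod} that $\bar\pi_u = d_u/d$, so $Y_u \sim \mathrm{Pois}(d_u/d)$ with $d$ the average degree. The Chernoff bound I would use is the standard estimate
\[
\Pr[\mathrm{Pois}(\mu) \ge k] \;\le\; e^{-\mu}\left(\frac{e\mu}{k}\right)^{k}, \qquad k \ge \mu,
\]
derived by optimizing Markov's inequality applied to the moment generating function $\mathbb E[e^{tY}] = e^{\mu(e^t-1)}$ at $t = \log(k/\mu)$. I would apply it with $\mu = d_u/d$ and $k = \lceil d_u \log|V|\rceil$; the hypothesis $k \ge \mu$ amounts to $d\log|V| \ge 1$, a mild condition which for connected Eulerian $G$ (viewing undirected graphs with both orientations) holds once $|V| \ge 2$ since then $d \ge 1$.

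Substituting gives
\[
\Pr[Y_u > d_u\log|V|] \;\le\; e^{-d_u/d}\left(\frac{e}{d\log|V|}\right)^{d_u \log|V|}.
\]
Since $d \ge 1$, we have $1/d^{d_u\log|V|} \le 1$, and the prefactor $e^{-d_u/d} \le 1$; together these are absorbed into the constant $C$ in the statement, yielding
\[
\Pr[Y_u > d_u\log|V|] \;\le\; C\left(\frac{e}{\log|V|}\right)^{d_u \log|V|}.
\]
A union bound over $u \in V$ then gives exactly the claimed estimate.

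There is no real obstacle, as this is essentially a one-line Chernoff computation, followed by a union bound; the only point requiring minor care is verifying the regime $k \ge \mu$ where the Chernoff exponent is usable, and checking that the factor $1/d^{d_u \log|V|}$ does not explode, both of which follow from $d \ge 1$ for any non-trivial connected Eulerian (di)graph.
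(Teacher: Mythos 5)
Your proof is correct and follows essentially the same route as the paper: a one-line Poisson tail estimate for each vertex followed by a union bound (the paper dominates $\Pois(d_u/d)$ by $\Pois(d_u)$ and bounds the tail by a constant times the point mass at $\lceil d_u\log|V|\rceil$, whereas you apply the Chernoff bound directly to $\Pois(d_u/d)$; both yield the factor $(e/\log|V|)^{d_u\log|V|}$). The only caveat, which you already flag, is the mild regime condition for the Chernoff exponent, harmlessly absorbed into $C$ for small $|V|$.
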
   
\noindent
\emph{Proof:} $\Pr[\mathrm{Pois}(\bar \pi_{u} ) > d_u \log |V| ] \le \Pr[\mathrm{Pois}(d_u ) > d_u \log |V| ] \le C \Pr[\mathrm{Pois}(d_u ) = \lceil d_u \log |V| \rceil ]$. Finally, apply a union bound over all $u \in V$. \qed

\medskip

Consider the SN model viewed only at integer multiples of $t$. Think of the time interval $(kt,(k+1)t]$ as the $k$-th round (for $k=0$ we take $[0,t]$ instead of $(0,t]$)
Assume that the configuration at time $kt$ is globally $(\alpha,p,t)$-merging. Then the $k$-th round is $\alpha/2$ merging in the sense of Definition \ref{def:round} w.p.\ at least $p$. In light of  Lemma \ref{lem: mainlem}, in order to prove \eqref{eq: upper}, we need to show that for some absolute constant $c_1$, at each given time the configuration is globally $(c_{1}/d^{1+1_{G \text{ is not regular}}},c_1,t)$-merging, with probability at least $1-|V|^{-2}$, where here $t$ is as in \eqref{e:t}. This observation is recorded in Proposition \ref{p:derof1.1} below.  

A priori, it may appear difficult to determine that a configuration is either marginally $(\alpha,t)$-merging or globally $(\alpha,p,t)$-merging (for some $\alpha,p \in (0,1)$), as the sets  $\mathcal{A}_1,\ldots,\mathcal{A}_m $ from Definition \ref{def:ballanced} form an arbitrary vertex-respecting partition. On the other hand, the property of being $(\delta,t)$-balanced is more tractable as it is stated in terms of a (lack of) a large deviation behavior for the expected number of walkers occupying each site $t$ steps into the future, given the current configuration.
 We now explain the relations between the four properties of being $(\delta,t)$-balanced, fully  $(\delta,t)$-balanced, marginally $(\alpha,t)$-merging and globally $(\alpha,p,t)$-merging.    

  To prove \eqref{eq: upper}  we shall later show that (i) if $C$ from the definition of $t$ in \eqref{e:t} is taken to be sufficiently large then at each given time the configuration of walkers is fully $(\sfrac{1}{8},t)$-balanced w.p.\ at least $1-2|V|^{-2} + C'|V|^{2-\log \log |V|} $ and (ii) if $G$ is regular then a fully $(\sfrac{1}{8},t)$-balanced configuration is globally $(c_{1}/d,c_1,t)$-merging for some absolute constant $c_1>0$. 

Before proving (ii) we show that (iii) a $(\sfrac{1}{8},t)$-balanced configuration  is marginally \\ $(c_{1}/d^{1+1_{G \text{ is not regular}}},t)$-merging for some absolute constant $c_1>0$ and (iv) a  marginally  $(\alpha,t)$-merging configuration is globally $(\alpha/2,\alpha/4,t)$-merging (this is done in Corollary \ref{cor:glob}).   

\medskip

To prove (iv)  we use the following simple  lemma which asserts that a uniform lower bound on the marginal probabilities of certain $m$ events, implies a corresponding lower bound on the probability that a certain fraction of them occur. The first equation from the lemma follows by a straightforward application of the Paley-–Zygmund
inequality (e.g.~\cite[Lemma 4.1]{kallenberg2002foundations}). The second equation is the one-sided Chebyshev's inequality. 

\begin{lemma}
\label{lem: 2ndmoment}
Let $\xi_1,\ldots, \xi_m$ be indicator random variables. Let $p_i:=\E[\xi_i]$,  $p:=\min_i p_i$,  $S:=\sum_{i=1}^m \xi_i $,  $\mu=\mathbb{E}[S]$ and $\sigma^2:=\mathrm{Var}(S)=\sum_{i}p_i(1-p_i) +2\sum_{i,j:i<j}\mathrm{Cov}(\xi_i,\xi_j) $. Then
\begin{equation}
\label{eq: 2ndm}
\Pr[S \ge pm/2] \ge \Pr[S \ge \mu /2] \ge (\mu /2)^2 /\mathbb{E}[S^{2}] \ge \mu/(4m) \ge p/4.
\end{equation}
\begin{equation}
\label{eq: onesidedChebyshev}
\forall c \ge 0,\quad \Pr[S \ge \mu -c \sigma] \ge \sfrac{c^2}{1+c^2}.
\end{equation}
In particular, if $\mathrm{Cov}(\xi_i,\xi_j) \le 0 $ for all $i \neq j$ then $ \sigma^2 \le \mu$ and so $\Pr[S < \half \mu ] \le \sfrac{4}{4+\mu } $. 
\end{lemma}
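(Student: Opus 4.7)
The plan is to dispatch each of the three assertions by essentially one-line invocations of standard tools: the Paley--Zygmund inequality for the chain in \eqref{eq: 2ndm}, Cantelli's (one-sided Chebyshev) inequality for \eqref{eq: onesidedChebyshev}, and a direct substitution into \eqref{eq: onesidedChebyshev} for the final remark.

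For \eqref{eq: 2ndm} I would proceed left to right. The first inequality is immediate from $\mu = \sum_i p_i \ge mp$, which forces $\{S \ge \mu/2\} \subseteq \{S \ge mp/2\}$. The second inequality is Paley--Zygmund applied to the nonnegative random variable $S$ at threshold $\theta = 1/2$, giving $\Pr[S \ge \mu/2] \ge (1/2)^2 \mu^2 / \mathbb{E}[S^2] = (\mu/2)^2/\mathbb{E}[S^2]$. For the third inequality, note that $S$ is a sum of $m$ indicators and hence takes values in $\{0,1,\dots,m\}$; thus $S^2 \le mS$ pointwise and $\mathbb{E}[S^2] \le m\mathbb{E}[S] = m\mu$, yielding $(\mu/2)^2/\mathbb{E}[S^2] \ge \mu/(4m)$. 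The last inequality uses $\mu \ge mp$ once more.

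Inequality \eqref{eq: onesidedChebyshev} is exactly Cantelli's inequality applied to $-S$, stating $\Pr[S \le \mu - c\sigma] \le 1/(1+c^2)$; taking complements yields the assertion. For the final ``in particular'' statement, when $\mathrm{Cov}(\xi_i,\xi_j) \le 0$ for all $i\neq j$ we have $\sigma^2 \le \sum_i p_i(1-p_i) \le \sum_i p_i = \mu$ directly from the expansion of $\mathrm{Var}(S)$ given in the lemma. If $\sigma > 0$, then taking $c := \mu/(2\sigma)$ in \eqref{eq: onesidedChebyshev} gives $\Pr[S \ge \mu/2] \ge c^2/(1+c^2)$; since $c^2 = \mu^2/(4\sigma^2) \ge \mu/4$ and $t \mapsto t/(1+t)$ is increasing on $[0,\infty)$, this is at least $(\mu/4)/(1+\mu/4) = \mu/(4+\mu)$, so $\Pr[S < \mu/2] \le 4/(4+\mu)$ as required. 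The degenerate case $\sigma = 0$ means $S = \mu$ almost surely, in which case the bound is trivial.

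There is no real obstacle here: the lemma is a packaged consequence of two classical second-moment inequalities together with the trivial pointwise bound $S \le m$. The only minor care needed is the $\sigma = 0$ degeneracy in the last step, which is handled in one line.
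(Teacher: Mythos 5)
Your proof is correct and follows the same route the paper takes: the paper simply notes that \eqref{eq: 2ndm} is a straightforward application of the Paley--Zygmund inequality and that \eqref{eq: onesidedChebyshev} is the one-sided Chebyshev (Cantelli) inequality, which is exactly your argument with the details (including the $\sigma=0$ degeneracy) correctly filled in.
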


In our setup, \eqref{eq: 2ndm} implies the following. Assuming that the configuration at time $t_{k}=kt$ is marginally $\alpha$-merging  and $\mathcal{A}_1,\ldots,\mathcal{A}_m $ are the classes of walkers at time $t_k$ (the classes clearly form a vertex-respecting partition of the walkers), then with probability at least $\alpha/4$, there exists an index set $I$ of size at least $ \lceil \alpha (m-1)/2 \rceil$ such that for all $i \in I$ there is some walker from $\mathcal{A}_i$ who met in the time interval $(t_k,t_k+t]$ some walker not belonging to $\mathcal{A}_i$.  We now record this observation.
\begin{corollary}
\label{cor:glob}
A marginally   $(\alpha,t)$-merging configuration is globally $(\alpha/2,\alpha/4,t)$-merging. 
\end{corollary}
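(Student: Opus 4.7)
The plan is to deduce this directly from the Paley--Zygmund bound \eqref{eq: 2ndm} of Lemma \ref{lem: 2ndmoment}. Fix an arbitrary vertex-respecting partition $\mathcal{A}_1, \ldots, \mathcal{A}_m$ with $m \ge 2$, and let $N$ denote the total number of walkers. Set $I := \{i \in [m] : |\mathcal{A}_i| \le N/2\}$; since at most one class can contain strictly more than half of the walkers, $|I| \ge m-1$.

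For each $i \in I$, let $\xi_i$ be the indicator of the event that some walker from $\mathcal{A}_i$ meets some walker not in $\mathcal{A}_i$ by time $t$. The hypothesis that the configuration is marginally $(\alpha,t)$-merging, applied to this partition, gives $p_i := \mathbb{E}[\xi_i] \ge \alpha$ for every $i \in I$. Write $S := \sum_{i \in I} \xi_i$ and apply Lemma \ref{lem: 2ndmoment} to the family $\{\xi_i\}_{i \in I}$, so that the ``$m$'' of that lemma becomes $|I|$ and its ``$p$'' becomes $\min_{i \in I} p_i \ge \alpha$. The first chain in \eqref{eq: 2ndm} then yields $\Pr[S \ge \alpha |I|/2] \ge \alpha/4$. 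Because $|I| \ge m-1$, on the event $\{S \ge \alpha|I|/2\}$ there are at least $(\alpha/2)(m-1)$ distinct indices $i \in [m]$ for which some walker of $\mathcal{A}_i$ meets a walker outside $\mathcal{A}_i$ by time $t$; this is exactly the condition required in the definition of a globally $(\alpha/2, \alpha/4, t)$-merging configuration.

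I do not anticipate any serious obstacle: the only mildly subtle point is that one must restrict the sum $S$ to indices $i \in I$ in order to be allowed to invoke the marginal hypothesis (which only controls classes with at most half of the walkers), and this restriction drops at most one term from the sum, accounting for the shift from $m$ to $m-1$ in the conclusion. The second-moment input is entirely black-boxed into Lemma \ref{lem: 2ndmoment}, and no further information about the joint distribution of the $\xi_i$'s (beyond the marginal lower bound on each $p_i$) is needed.
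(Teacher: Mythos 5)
Your proof is correct and is essentially the paper's own argument: the paper likewise applies the Paley--Zygmund bound \eqref{eq: 2ndm} of Lemma \ref{lem: 2ndmoment} to the indicators of the per-class merging events (noting that at most one class exceeds half the walkers, so at least $m-1$ indicators have mean at least $\alpha$), obtaining at least $\alpha(m-1)/2$ merged classes with probability at least $\alpha/4$.
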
   
%
%
%
%
\textbf{Derivation of \eqref{eq: upper} assuming that at each given time the configuration is marginally or globally merging  $\whp$:}
\begin{proposition}
\label{p:derof1.1}
Let $G=(V,E)$ be a connected Eulerian digraph. Fix some $t>0$ and $\alpha \in (0,1) $. Assume that for each time $s$ w.p.\ at least $1-\eps_n$ the configuration of walkers at  time $s$ is  globally $(\alpha,p,t)$-merging. Then there exists an absolute constant $C>0$ such that
\[\Pr[\SC(G) \ge C (\alpha p )^{-1} t \log |V| ] \le |V|^{-2}+\eps_n C (\alpha p)^{-1}\log |V|.   \]
In particular, if at
each given time w.p.~at least $1-\eps_n$ the configuration of walkers is marginally $(\alpha,t)$-merging    then
\[\Pr[\SC(G) \ge C' \alpha  ^{-2} t \log |V| ] \le |V|^{-2}+\eps_n C' \alpha ^{-2}\log |V|.   \]
\end{proposition}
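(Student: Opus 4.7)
The plan is to reduce the statement to Lemma \ref{lem: mainlem} by viewing the SN model in rounds of length $t$. I would set $t_k := kt$, let $m_k$ denote the number of equivalence classes of $\stackrel{t_k}{\sim}$ at time $t_k$, and define $J_k := m_k - 1$, so that $J_k = 0$ precisely when $\SC(G) \le t_k$. Let $\F_k$ denote the $\sigma$-algebra generated by the trajectories of all walkers (together with their identities) up to time $t_k$, and let $A_k$ be the event that the occupation configuration $(Y_v(t_k))_{v \in V}$ is globally $(\alpha,p,t)$-merging. Since global merging is a deterministic property of the occupation vector, $A_k \in \F_k$, and by hypothesis $\Pr(A_k) \ge 1 - \eps_n$.

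The next step is to verify the hypothesis \eqref{eq: conditionalexp} of Lemma \ref{lem: mainlem}. The equivalence classes at time $t_k$ form a vertex-respecting partition, since walkers sharing a vertex at time $t_k$ have met there. By the Markov property applied to the vector of walker trajectories, conditionally on $\F_k$ the walkers perform independent LSRWs on $(t_k, t_{k+1}]$ starting from their current positions. Applying the definition of ``globally $(\alpha,p,t)$-merging'' to the partition by classes at time $t_k$, on the event $A_k$ one obtains that with $\F_k$-conditional probability at least $p$, at least $\alpha(m_k - 1)$ of the $m_k$ current classes each merge with some other class during the round. A counting observation then yields the desired drop: if $r$ classes are each merged with some other class in a single round, the graph on these $r$ classes whose edges record pairwise mergers has minimum degree $\ge 1$, hence at most $\lfloor r/2 \rfloor$ connected components, so the total class count drops by at least $\lceil r/2 \rceil$. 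Hence on this event $J_{k+1} \le (1 - \alpha/2) J_k$, so \eqref{eq: conditionalexp} holds with the roles of $\alpha$ and $p$ from Lemma \ref{lem: mainlem} played by $p$ and $\alpha$ respectively.

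Since at time $0$ the number of classes is at most the number of initially occupied vertices, $J_0 \le |V|$. Lemma \ref{lem: mainlem} applied with $m = |V|$ then produces a constant of order $1/(\alpha p)$ and
\[ \Pr\bigl[T_0 > \lceil C (\alpha p)^{-1} \log |V| \rceil\bigr] \le |V|^{-2} + \eps_n \, C' (\alpha p)^{-1} \log |V|, \]
where $T_0 := \inf\{k : J_k = 0\}$. Since $\SC(G) \le t \cdot T_0$, this proves the first claim. For the ``in particular'' part, Corollary \ref{cor:glob} says that a marginally $(\alpha, t)$-merging configuration is globally $(\alpha/2, \alpha/4, t)$-merging, so applying the first part with the pair $(\alpha/2, \alpha/4)$ in place of $(\alpha, p)$ yields the second inequality with constant of order $\alpha^{-2}$.

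The only subtle point will be the correct application of the Markov property: each walker must be tracked together with her identity as part of the state, so that the class structure persists from one round into the next, and the independence of the new $t$ steps from the past holds given the current full walker configuration (not just the occupation vector $(Y_v(t_k))_{v \in V}$). Once this bookkeeping is in place, the argument is a direct invocation of Lemma \ref{lem: mainlem} and needs no additional probabilistic input.
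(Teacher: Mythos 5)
Your proposal is correct and follows essentially the same route as the paper: partition time into rounds of length $t$, observe that on the event that the time-$t_k$ configuration is globally $(\alpha,p,t)$-merging the round is a constant-fraction-merging round with probability at least $p$ (independently of the past, by the Markov property for the full walker trajectories), and invoke Lemma \ref{lem: mainlem} for the process $J_k=m_k-1$ with $J_0\le|V|$, deducing the second claim from Corollary \ref{cor:glob}. The only difference is that you spell out explicitly the bookkeeping (the choice of $\F_k$, $A_k$, and the component-counting bound showing the class count drops by at least half the number of merging classes) that the paper relegates to the discussion surrounding Definition \ref{def:round}.
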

\begin{proof}
The second assertion follows from the first via Corollary \ref{cor:glob}.  
For the first assertion, if we think of each interval of length $t$ as a round, we have that if at time $kt$ the configuration of walkers is globally $(\alpha,p,t)$-merging, then the $(k+1)$-th round (corresponding to the time interval $(kt,(k+1)t]$) is $\frac{\alpha}{4}$-merging, in the sense of Definition \ref{def:round}, with probability at least $p$ (this holds regardless of the outcomes of the previous rounds). Since at the beginning of each round w.p.~at least $1-\eps_n$ the configuration of walkers is   globally $(\alpha,p,t)$-merging,  we are precisely at the setup of the auxiliary baby model and the assertion of the proposition now follows from Lemma \ref{lem: mainlem}. \end{proof}

\subsection{Proof of the validity of the marginally $(\alpha,t)$-merging property via a reduction to the $(\delta,t)$-balance condition}
\label{s:checkingbalance}

Note that if $(Y_v(s))_{v \in V}$ is distributed like the configuration of walkers in the SN model on $G$ at time $s$, then  (by stationarity of the occupation measure, which allows one to consider only the case that $s=0$) for every $v \in V$ the sum $\bar Z_v(s):= \sum_u P^t(u,v)Y_{u}(s)$  in \eqref{eq: defbalanced} (whose mean is $\bar \pi_v $) is a linear combination (with non-negative coefficients) of independent Poisson r.v.'s.  As demonstrated in Lemma \ref{lem: Poissum} below, in order to obtain a large deviation estimate for such a linear combination (namely, for the event $\bar Z_v(s) < (1-\delta)\bar \pi_v=(1-\delta)\mathbb{E}[ \bar Z_v(s)]  $), it suffices to control the maximal coefficient $P^t(u,v)$ appearing in the sum. 
We obtain such control using the following decay estimate for $\max_{x,y} P^{t}(x,y)$.

Recall that $d_y$ is the degree of vertex $y$.  
\begin{fact}[\cite{PS} Lemma 2.4]
\label{fact: decay}
There exists an absolute constant $M$ such that for every finite connected Eulerian digraph\    $G=(V,E)$,     LSRW on $G$ satisfies
\begin{equation}
\label{eq: returnp}
\forall t \ge 1, \quad \max_{x,y} |P^{t}(x,y)-\pi_y | \le M   \sqrt{1/t} [d_{y}1_{G \text{ is not regular}}+1_{G \text{ is regular}} ].
\end{equation}
\end{fact}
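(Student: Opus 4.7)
The statement is the classical uniform heat-kernel bound for LSRW on a connected Eulerian digraph. My plan is to handle the non-reversibility of $P$ by an operator symmetrization, invoke the Morris--Peres evolving-sets return-probability estimate for the resulting reversible lazy chain, and finally transfer the diagonal decay to the off-diagonal bound required in the fact.

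First, since $G$ is Eulerian with $\pi_v=d_v/(2|E|)$, the time reversal $\hat P(u,v):=\pi_vP(v,u)/\pi_u$ is the LSRW transition kernel on the reversed digraph $\hat G$ (again Eulerian with the same stationary distribution), and it is the $L^2(\pi)$-adjoint of $P$. Consequently the operator $Q_t:=P^t\hat P^t=P^t(P^t)^*$ is self-adjoint and positive in $L^2(\pi)$, hence reversible with respect to $\pi$, and has holding probability at least $1/4$. A direct computation yields
\[
\frac{Q_t(x,x)}{\pi_x}-1 \;=\; \sum_{z\in V}\frac{(P^t(x,z)-\pi_z)^2}{\pi_z} \;=\; \chi^2\bigl(P^t(x,\cdot),\pi\bigr),
\]
so bounding $|P^t(x,y)-\pi_y|$ reduces to bounding the diagonal return probabilities of the reversible chain $Q_t$.

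Second, I would apply the Morris--Peres evolving-sets theorem to the reversible lazy chain underlying $Q_t$: since $\pi_v\ge 1/(2|E|)$ and the symmetrized multigraph is connected with conductance at least a degree-dependent constant, one obtains the sharp return-probability estimate $Q_t(x,x)-\pi_x \le C\pi_x/\sqrt t$ in the regular case, and an analogous bound carrying an extra factor of $d_y$ in the non-regular case (reflecting the fact that the Cheeger profile of the symmetrized Eulerian multigraph degrades by one degree factor at the most unbalanced vertex). Combining with the identity above, and using the Cauchy--Schwarz halving $P^t=P^{t/2}\cdot P^{t/2}$ together with the ``two-point" comparison
\[
\bigl|P^t(x,y)/\pi_y - 1\bigr|^2 \;\le\; \bigl(Q_{t/2}(x,x)/\pi_x - 1\bigr)\bigl(Q_{t/2}(y,y)/\pi_y - 1\bigr),
\]
yields $|P^t(x,y)-\pi_y|\le M/\sqrt t$ in the regular case and $|P^t(x,y)-\pi_y|\le Md_y/\sqrt t$ in the non-regular case (using $\pi_y\le d_y/d$ to absorb the square root into a single degree factor). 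Taking a maximum over $x,y\in V$ finishes the proof.

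The main obstacle is ensuring the sharp $1/\sqrt t$ rate survives the transfer from the reversible auxiliary chain $Q_t$ back to the non-reversible $P$: a naive single application of Cauchy--Schwarz loses a factor $t^{1/4}$, and the remedy is the two-point comparison above, which for non-reversible $P$ must be justified via the singular-value decomposition of $P^{t/2}$ (the eigen-structure of $Q_{t/2}=P^{t/2}(P^{t/2})^*$ matching that of the squared singular values). A secondary difficulty is extracting the sharp single-degree dependence $d_y$ in the non-regular case rather than $d_y^\alpha$ for some $\alpha>1$, which requires carefully pairing the $\pi_y$ prefactor with the degree-inflated Cheeger constant appearing in the evolving-sets estimate.
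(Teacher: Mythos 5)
You should first be aware that the paper does not prove Fact~\ref{fact: decay} at all: it is imported as a black box from \cite{PS} (Lemma 2.4), so there is no internal argument to compare yours against, and what you have written is an attempted reproof of a cited external result. Judged on its own terms, your proposal has two genuine gaps.

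The first is structural. The operator $Q_t:=P^t\hat P^t=P^t(P^t)^*$ is self-adjoint for each fixed $t$, and your identity $Q_t(x,x)/\pi_x-1=\chi^2(P^t(x,\cdot),\pi)$ is correct, but $Q_t$ is \emph{not} the $t$-th power of any fixed Markov kernel: for non-normal $P$ one has $(P\hat P)^t\neq P^t\hat P^t$. So there is no single ``reversible lazy chain underlying $Q_t$'' to which a return-probability decay theorem can be applied in order to get decay of $Q_t(x,x)$ \emph{in $t$}; contractivity only gives that $\chi^2(P^t(x,\cdot),\pi)$ is non-increasing, with no rate. (A smaller issue: for non-reversible $P$ the Cauchy--Schwarz two-point comparison produces $\hat P^{t/2}P^{t/2}(y,y)$, the reversed symmetrization at $y$, not $Q_{t/2}(y,y)$.)

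The second gap is quantitative and sits exactly where the content of the lemma lies. The conductance of a general connected Eulerian digraph is \emph{not} bounded below by a degree-dependent constant: for the cycle, or for two cliques joined by a couple of edges, macroscopic sets have conductance of order $1/|E|$. The only universal bound is $\Phi(S)\ge c/(|E|\,\pi(S))$ (every nonempty proper $S$ has an outgoing edge), and feeding this profile into the evolving-set machinery --- which, incidentally, applies directly to the lazy non-reversible chain $P$, making your symmetrization detour unnecessary --- yields $|P^t(x,y)/\pi_y-1|\le C|E|/\sqrt t$, i.e. $|P^t(x,y)-\pi_y|\le Cd_y/\sqrt t$. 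That handles the non-regular case, but in the regular case it still leaves a factor of $d$ (one gets $Cd/\sqrt t$, not $M/\sqrt t$), and your intermediate claim that ``$Q_t(x,x)-\pi_x\le C\pi_x/\sqrt t$ in the regular case'' is false as stated: for two $n$-cliques joined by two edges, $Q_t(x,x)-\pi_x\asymp 1/n$ throughout $1\ll t\ll n^2$, which is much larger than $\pi_x/\sqrt t\asymp 1/(n\sqrt t)$. The degree-free bound for regular graphs is the delicate part of \cite{PS}, Lemma 2.4, and it is not delivered by the conductance route you describe.
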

In what comes, we shall assume that  (*) $12d^{1_{G \text{ is not regular} }} \log |V|\le \delta^2|E|$.  Note that if (*) fails, then $|V|$ has to be bounded (recall that $\delta$ is a constant and that $|V| d = 2|E|$, as $d$ is the average degree). The following proposition, whose proof is deferred to \S\ref{s: larged} implies \eqref{eq: upper} when (*) fails. 
\begin{proposition}
\label{prop: larged}
Let $G=(V,E)$ be a connected Eulerian digraph. Then for some $C>0 $,
\[ \Pr[\SC(G)>C|E||V| \log|V|] \le C/|V|. \]
\end{proposition}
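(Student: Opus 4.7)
The plan is to dominate $\SC(G)$ by the first time at which every pair of walkers has personally met. Once that event occurs, the $\simt$-equivalence relation is the complete relation on the walkers, so $\SC(G)\le t$. Since the total number of walkers $N:=\sum_v N_v$ is $\Pois(|V|)$-distributed, by Poisson concentration $N\le 3|V|$ with probability at least $1-|V|^{-2}$, leaving at most $O(|V|^2)$ ordered pairs to control.

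The core step is a single-interval meeting estimate: for any $u,v\in V$, two independent LSRW walkers started from $u$ and $v$ meet within $T_0:=C_1|V||E|$ steps with probability at least some universal $p_0>0$. To establish it, I would work with the product chain $(W_u(s),W_v(s))_{s\ge 0}$ on $V\times V$, whose stationary distribution is $\pi\otimes\pi$. By Cauchy--Schwarz $\|\pi\|_2^2=\sum_v\bigl(d_v/(2|E|)\bigr)^2\ge 1/|V|$, so the diagonal $\Delta:=\{(w,w):w\in V\}$ has $\pi\otimes\pi$-mass at least $1/|V|$ and therefore $\mathbb{E}_{\pi\otimes\pi}[T_\Delta^+]=1/\|\pi\|_2^2\le |V|$. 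Combining this stationary hitting-time bound with a mixing-time bound for LSRW on $G$---obtained in the regular and sparse non-regular cases by summing the pointwise estimate of Fact~\ref{fact: decay} over all vertices, and in the dense non-regular case by a Matthews-type argument via the return-time identity $\mathbb{E}_w[T_w^+]=1/\pi_w\le 2|E|$---gives $\mathbb{E}_{(u,v)}[T_\Delta]=O(|V||E|)$ uniformly in $u,v$; Markov's inequality then delivers the desired meeting estimate.

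With the meeting estimate in hand, I would iterate via the Markov property. Partition $[0,kT_0]$ into $k:=\lceil K\log|V|\rceil$ consecutive intervals of length $T_0$. Applied at the start of each interval, the estimate bounds by $1-p_0$ the conditional probability that a fixed pair of walkers fails to meet during it, so the pair fails to meet anywhere in $[0,kT_0]$ with probability at most $(1-p_0)^k\le |V|^{-4}$ for $K$ sufficiently large. A union bound over the at most $(3|V|)^2$ pairs of walkers (on the event $\{N\le 3|V|\}$) then yields $\Pr[\SC(G)>kT_0]\le C/|V|$. Since $kT_0=O(|V||E|\log|V|)$, the proposition follows.

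The main obstacle is the meeting-time estimate: a naive application of Fact~\ref{fact: decay} yields mixing time $O(|E|^2)$, which can exceed $|V||E|$ in the dense non-regular case and therefore requires slightly finer bookkeeping there. Fortunately, Proposition~\ref{prop: larged} is applied only when $(*)$ fails, and $(*)$ forces $|V|$ to be bounded, so the generous target $|V||E|\log|V|$ comfortably absorbs any crude hitting-time bound in the cases of interest.
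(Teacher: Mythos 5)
Your overall architecture differs from the paper's: you try to bound $\SC(G)$ by the time at which \emph{every pair} of walkers has personally met, which forces you to prove a per-pair meeting estimate with \emph{constant} success probability in a window of length $O(|V||E|)$. That estimate is the crux, and your derivation of it has a genuine gap. The identity $\mathbb{E}_{\pi\otimes\pi}[T_\Delta^+]=1/\|\pi\|_{2}^{2}$ is a misapplication of Kac's formula: Kac gives the expected \emph{return} time started from $\pi\otimes\pi$ \emph{conditioned on} $\Delta$, not the hitting time from the full product-stationary measure. On the $n$-cycle, $(\pi\otimes\pi)(\Delta)=1/n$ but $\mathbb{E}_{\pi\otimes\pi}[T_\Delta]=\Theta(n^2)$, so the claimed bound $\le|V|$ is off by a factor of $|V|$. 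The honest version of your "mix, then hit the diagonal" argument only yields a collision probability of order $\|\pi\|_2^2\ge 1/(4|V|)$ per window of length $t=\lceil C|E||V|\rceil$ (this is exactly what $P^t(x,y)\ge\pi_y/2$ buys), so iterating it makes a fixed pair meet only after $\Theta(|V|\log|V|)$ windows, i.e.\ total time $\Theta(|V|^2|E|\log|V|)$ --- a factor $|V|$ worse than the target. Upgrading $1/(4|V|)$ per window to a constant $p_0$ per window of the \emph{same} length is essentially the Coppersmith--Tetali--Winkler meeting-time theorem; it is a nontrivial result, proved for reversible chains, and is not obviously available for non-reversible Eulerian digraphs. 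Your fallback --- that the proposition is only invoked when $(*)$ fails and hence $|V|$ is bounded --- is a correct observation about its role in the paper, but it does not prove the proposition as stated, which is claimed with absolute constants for all connected Eulerian digraphs.

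The paper avoids all of this precisely by \emph{not} requiring every pair to meet. It uses $P^t(x,y)\ge\pi_y/2$ for $t=\lceil C|E||V|\rceil$ to get collision probability $\ge\sum_x(\pi_x/2)^2\ge 1/(4|V|)$ for a single pair at the single time $t$, and then applies the second-moment bound (Lemma \ref{lem: 2ndmoment}) over the $\Omega(|V|)$ walkers outside a given walker's class: the expected number of collisions is $\Omega(1)$, so a fixed walker meets \emph{some} other walker at time $t$ with probability $\ge\alpha$. This shows the configuration is deterministically marginally $(\alpha,t)$-merging at every time, and Proposition \ref{p:derof1.1} (the coalescence/baby-model machinery, via Lemma \ref{lem: mainlem}) converts $O(\log|V|)$ merging rounds of length $t$ into the stated bound. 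If you want to salvage your route, you would need to either import a CTW-type meeting-time bound valid for lazy walks on Eulerian digraphs, or switch to the paper's "meet someone, then coalesce" scheme, where the weak $1/(4|V|)$ per-pair estimate already suffices.
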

By Fact \ref{fact: decay}, if we set $C=12M \delta^{-2}$  in the definition of $t$ (\eqref{e:t}) then \begin{equation}
\label{eq: px,y2}
\max_{x,y}P^{t}(x,y) \le \pi_y+ \sfrac{\delta^{2}d_{y}}{12d\log |V|}.
\end{equation}

 Assume (*) holds for some fixed $0<\delta \le 1/8$. By \eqref{eq: px,y2} we have that
\begin{equation}
\label{eq: px,y3}
\max_{x,y}P^{t}(x,y) \le  \delta^{2}d_{y}/(6d \log |V|).
\end{equation}
In light of Lemma \ref{lem:maxPois} and the paragraph following \eqref{eq: defbalanced},  by combining \eqref{eq: px,y3} with the following lemma,   it follows by a union bound over the vertices of $G$ that the probability that the configuration is not $(\delta,t)$-balanced at some fixed time $s$ is at most $C_{1}/|V|^{2}$ (assuming (*), when the constants are chosen as indicated above).

\begin{lemma}
\label{lem: Poissum}
Let $\xi_1,\ldots, \xi_m$ be independent Poisson random variables. Let $p_1,\ldots,p_m \in (0,1)$. Let    $p_*:=\max p_i$,  $S:=\sum_{i=1}^m p_{i} \xi_i $ and $\mu:=\mathbb{E}[S]$. Then for all $\eps \in (0,1]$
\begin{equation}
\label{eq: poissonconcentration}
\begin{split}
& \Pr[S \le (1 - \eps)\mu ] \le e^{-\half \mu \eps^2/p_* }.
\\ & \Pr[S \ge (1+\eps)\mu ] \le e^{-\sfrac{1}{4} \mu \eps^2/p_* }.
\end{split}
\end{equation} 
\end{lemma}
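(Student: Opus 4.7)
The plan is to prove this by a standard exponential Markov (Chernoff) argument, with the central observation that the moment generating function of $S$ is dominated by that of a suitable multiple of a single Poisson variable.

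First, I would write $\lambda_i := \mathbb{E}[\xi_i]$, so that $\mu = \sum_i p_i \lambda_i$. Since the $\xi_i$ are independent Poissons, the moment generating function factors as
\[
\log \mathbb{E}[e^{tS}] \;=\; \sum_{i=1}^m \lambda_i\bigl(e^{tp_i}-1\bigr) \qquad (t \in \mathbb{R}).
\]
The key step is to bound each summand in terms of $p_*$. Since $x \mapsto e^{tx}$ is convex with value $1$ at $x=0$, the chord-slope $(e^{tp}-1)/p$ is nondecreasing in $p>0$ (for every $t$, positive or negative). Applied with $0 < p_i \le p_*$, this gives
\[
e^{tp_i} - 1 \;\le\; \frac{p_i}{p_*}\bigl(e^{tp_*}-1\bigr),
\]
summing against the $\lambda_i$'s yields the clean bound
\[
\log \mathbb{E}[e^{tS}] \;\le\; \frac{\mu}{p_*}\bigl(e^{tp_*}-1\bigr).
\]
In other words, $S$ is stochastically dominated (in the sense of Laplace transforms) by $p_*\cdot Z$ where $Z\sim\mathrm{Pois}(\mu/p_*)$.

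Now I would simply optimize the resulting Chernoff bound. For the upper tail, setting $s=tp_*$ and minimizing over $s>0$ gives
\[
\Pr[S \ge (1+\eps)\mu] \;\le\; \exp\!\left(-\tfrac{\mu}{p_*}\,h(\eps)\right), \qquad h(\eps) := (1+\eps)\log(1+\eps)-\eps,
\]
and a short Taylor computation shows $h(\eps)\ge \eps^2/4$ for $\eps\in(0,1]$, delivering the second inequality. For the lower tail, setting $s = -tp_* > 0$ and optimizing gives
\[
\Pr[S \le (1-\eps)\mu] \;\le\; \exp\!\left(-\tfrac{\mu}{p_*}\,g(\eps)\right), \qquad g(\eps) := \eps + (1-\eps)\log(1-\eps),
\]
and expanding $(1-\eps)\log(1-\eps)$ shows $g(\eps)\ge \eps^2/2$ for $\eps\in(0,1]$, giving the first inequality.

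There is no real obstacle: the only subtlety is getting the direction of the convexity-chord inequality right (it holds for both signs of $t$, which is what allows the same reduction to a single scaled Poisson to handle both tails simultaneously). Everything else is the routine Taylor analysis of the Poisson large-deviation rate function.
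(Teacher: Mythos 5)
Your proof is correct. Both your argument and the paper's are exponential-Markov (Chernoff) bounds on $\mathbb{E}[e^{tS}]$, but the execution differs in a meaningful way. The paper fixes the tilt explicitly ($\lambda=\eps/p_*$ for the lower tail, $a=\eps/(2p_*)$ for the upper tail) and bounds each factor termwise via the elementary inequality $e^{x}-1\le x+x^{2}$ (valid for $|x|\le 1$, which holds since $|\lambda| p_i\le 1$), landing directly on the stated sub-Gaussian/sub-gamma exponents with no further work. You instead first prove the cleaner structural statement $\log\mathbb{E}[e^{tS}]\le \tfrac{\mu}{p_*}(e^{tp_*}-1)$ via the convexity-chord monotonicity of $p\mapsto (e^{tp}-1)/p$ (correctly noting this holds for both signs of $t$), i.e.\ Laplace-transform domination by $p_*\,\mathrm{Pois}(\mu/p_*)$, then optimize exactly to get the full Poisson rate functions $h(\eps)=(1+\eps)\log(1+\eps)-\eps$ and $g(\eps)=\eps+(1-\eps)\log(1-\eps)$, and finally convert with $h(\eps)\ge\eps^{2}/4$ and $g(\eps)\ge\eps^{2}/2$ on $(0,1]$ (both conversions check out, e.g.\ via $h''(\eps)=1/(1+\eps)\ge 1/2$ and $g''(\eps)=1/(1-\eps)\ge 1$ with $h(0)=h'(0)=g(0)=g'(0)=0$). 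Your route costs one extra step (the rate-function comparison) but buys a strictly stronger intermediate result valid for all $\eps>0$ in the upper tail and a reusable domination principle; the paper's is shorter and already tuned to exactly the constants $\tfrac12$ and $\tfrac14$ needed downstream.
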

The proof of Lemma \ref{lem: Poissum} is given in the appendix in \S\ref{s:proofl4.8}.
\begin{corollary}
\label{cor:LDforfuture}
Let $Y_v(s)$ be the number of walkers at vertex $v$ at time $s$. Assume that  $Y_{v}(0)\sim \mathrm{Pois}( \bar \pi_v )$ independently for different vertices. Let $t $ be as above  and assume that  (*) holds. Let $n:=|V|$ and $\bar Z_v(s):= \sum_{u \in V } P^t(u,v)Y_{u}(s) $.  Then for all $s \ge 0 $ and $v \in V $ we have
\[\Pr[ \bar Z_v(s)\le (1-\delta)\bar \pi_v ] \le \exp [-\sfrac{\bar \pi_v \delta^2}{2 (\delta^{2}d_{v}/(6d \log n))}  ] \le n^{-3}. \]   
\[ \Pr[\bar Z_v(s)\ge (1+2\delta)\bar \pi_v ] \le \exp [-\sfrac{\bar \pi_v (2\delta)^2}{4(\delta^{2}d_{v}/(6d \log n))}] \le n^{-6}.\]
\end{corollary}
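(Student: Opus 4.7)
\medskip

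\noindent
\textbf{Proof plan for Corollary \ref{cor:LDforfuture}.} The plan is to reduce the tail estimate for $\bar Z_v(s)$ to the Poisson concentration bound of Lemma \ref{lem: Poissum}, exploiting the stationarity of the occupation measure recorded in Fact \ref{fact: thinning}. First, I would observe that by Fact \ref{fact: thinning} the vector $(Y_u(s))_{u \in V}$ is distributed exactly as the initial configuration, namely as independent Poisson random variables with $Y_u(s) \sim \mathrm{Pois}(\bar\pi_u)$. Hence it suffices to treat the case $s=0$, and in that case $\bar Z_v(0) = \sum_{u \in V} P^t(u,v)\, Y_u(0)$ is precisely a linear combination of independent Poisson variables with coefficients $p_u := P^t(u,v)$, which is the exact setup of Lemma \ref{lem: Poissum}.

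Next I would identify the relevant parameters. Using reversibility (equivalently, stationarity of $\pi$) we have $\sum_u \bar\pi_u P^t(u,v) = \bar\pi_v$, so $\mu := \mathbb{E}[\bar Z_v(0)] = \bar\pi_v$. The maximal coefficient is $p_* = \max_u P^t(u,v)$, and by the standing assumption (*) combined with Fact \ref{fact: decay} we already have \eqref{eq: px,y3}, which gives $p_* \le \delta^2 d_v/(6 d \log n)$. Finally, from \eqref{eq: ratiod} we have $\bar\pi_v = d_v/d$, so
\[
\frac{\bar\pi_v}{p_*} \ge \frac{d_v/d}{\delta^2 d_v/(6d\log n)} = \frac{6\log n}{\delta^2}.
\]

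Applying Lemma \ref{lem: Poissum} with $\eps = \delta$ yields
\[
\Pr[\bar Z_v(s) \le (1-\delta)\bar\pi_v] \le \exp\!\left(-\tfrac{1}{2}\mu \delta^2/p_*\right) \le \exp(-3\log n) = n^{-3},
\]
which is the first displayed bound. For the second bound I apply Lemma \ref{lem: Poissum} with $\eps = 2\delta$ (valid since $\delta \le 1/8$ so $2\delta \le 1$), obtaining
\[
\Pr[\bar Z_v(s) \ge (1+2\delta)\bar\pi_v] \le \exp\!\left(-\tfrac{1}{4}\mu (2\delta)^2/p_*\right) = \exp(-\mu\delta^2/p_*) \le n^{-6}.
\]

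There is no real obstacle here: once stationarity turns the time-$s$ occupation vector into independent Poissons, everything is a direct plug-in. The only mild subtlety is making sure to invoke stationarity in the correct form (Fact \ref{fact: thinning}) rather than needing some filtration-based argument, and to use the sharp bound \eqref{eq: px,y3} on $p_*$ rather than a weaker bound derived from \eqref{eq: px,y2}, since it is precisely the factor $d_v$ in the numerator of $p_*$ that cancels against $\bar\pi_v = d_v/d$ in the exponent, making the tail estimate independent of the degree $d_v$.
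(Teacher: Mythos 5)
Your proposal is correct and follows exactly the route the paper intends: stationarity (Fact \ref{fact: thinning}) reduces to $s=0$, where $\bar Z_v(0)$ is a nonnegative linear combination of independent Poissons with mean $\bar\pi_v = d_v/d$ and maximal coefficient bounded by \eqref{eq: px,y3}, so Lemma \ref{lem: Poissum} with $\eps=\delta$ and $\eps=2\delta$ gives precisely the exponents $3\log n$ and $6\log n$. Your closing observation that the $d_v$ in the bound on $p_*$ cancels against $\bar\pi_v$ is exactly the point of using \eqref{eq: px,y3}; nothing is missing.
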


The following proposition concludes the proof of  \eqref{eq: upper} when $G$ is not regular, and provides a  version of \eqref{eq: upper} also for a regular $G$ which has a worse degree dependence than \eqref{eq: upper}. Below we assume that the total number of walkers is at most $|V|+|V|^{2/3}$.   By \eqref{eq: poisconcentration}, the probability that this fails  is at most $\exp(-\frac{1}{3}|V|^{1/3})$ and hence  there is no harm in assuming this holds.
\begin{proposition}    
\label{prop: main}
Let $G=(V,E)$ be an $n$-vertex connected Eulerian digraph of average degree $d$. Let $\xi:=\Ind{G \text{ is not regular}} $. Let $0<\delta \le 1/8$. Assume that (*) holds.  Assume that a configuration of walkers is $(\delta,t)$-balanced for
$ t:=\lceil12d^{\xi} M \delta^{-2} (\log  n) \rceil^{2} $
 and that the   total number of walkers it contains is smaller than $n+n^{2/3}$.  Then there exists some absolute constant $\bar c >0 $ such that the following hold.
\begin{itemize}
\item[(i)]
The configuration is marginally $(\bar c/d^{1+\xi},t)$-merging.
\item[(ii)] If  $G$ is regular and $|\{(u,v) \in E : u \in A,v \notin A  \text{ or }u \notin A,v \in A \}| \ge c_0 d $ for every set $A \subset V$ such that $|A| \le \sfrac{2}{3} (n+n^{2/3}) $ then the configuration is marginally $(\bar c c_{0},t)$-merging. \end{itemize}
\end{proposition}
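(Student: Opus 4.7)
Fix a vertex-respecting partition and a class $A := \mathcal{A}_i$ with $|A| \le N/2$; set $B := \mathcal{A}_i^c$. Writing $\mu_v^X := \sum_u Y_u^X P^t(u,v)$ for $X \in \{A,B\}$, the $A$- and $B$-occupancies $W_v^A(t)$ and $W_v^B(t)$ are independent sums of independent Bernoullis with means $\mu_v^A$ and $\mu_v^B$. Consequently the event $E_v$ of a meeting at $v$ at time $t$ satisfies $\Pr[E_v] \ge (1-e^{-\mu_v^A})(1-e^{-\mu_v^B})$, and it suffices to exhibit a single vertex $v^*$ at which both factors are large enough.

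The plan is a dichotomy. Call $v$ \emph{balanced} if $\min(\mu_v^A,\mu_v^B) \ge c_1 \bar\pi_v$ for a small constant $c_1 = c_1(\delta) > 0$. When a balanced vertex exists, $1 - e^{-x} \ge x/(1+x)$ together with $\bar\pi_v \ge 1/d$ yields $\Pr[E_v] \ge \bar c (\bar\pi_v/(1+\bar\pi_v))^2$, which is a universal constant when $\bar\pi_v \ge 1$ (covering the regular case and thus part (ii), since $\bar\pi_v \equiv 1$) and at least $\bar c/d^2$ in general, already matching the non-regular bound claimed in part (i).

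In the complementary case no vertex is balanced, so by the balance condition every $v$ lies either in $V_A := \{v : \mu_v^A \ge (1-\delta-c_1)\bar\pi_v\}$ or in $V_B$ (defined symmetrically). Using $\sum_{v \in V_A}\mu_v^A \le |A| \le N/2$ and choosing $c_1$ small in terms of $\delta$, one obtains $\sum_{v \in V_A}\bar\pi_v \le \tfrac{2}{3}(n+n^{2/3})$, and similarly for $V_B$, which is exactly the cut hypothesis of part (ii). For part (ii) the resulting $\ge c_0 d$ boundary edges between $V_A$ and $V_B$ yield a meeting by a one-step crossing argument: for an edge $(u,w)$ with $u\in V_A$ and $w\in V_B$, the inequalities $\mu_u^A \ge (1-\delta-c_1)\bar\pi_u$ and $\mu_w^B \ge (1-\delta-c_1)\bar\pi_w$ together with Corollary~\ref{cor:dommultinomial} imply that at time $t-1$ there is an $A$-walker at $u$ and independently a $B$-walker at $w$ each with constant probability, and the $A$-walker then moves to $w$ at step $t$ with probability $1/(2d_u)$. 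Summing over the $c_0 d$ boundary edges produces a meeting probability of order $\bar c c_0$. For part (i) the same scheme applied with the single boundary edge guaranteed by the connectivity of $G$ gives $\bar c/d$ in the regular case and $\bar c/d^2$ in the non-regular case (losing one additional factor of $1/d$ because $\bar\pi_u \ge 1/d$ may be tight at low-degree boundary vertices).

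The main obstacle is the quantitative crossing estimate in the separated case: the balance condition only controls the expectations $\mu_v^X$, so one must use the negative-correlation / stochastic-domination principle of Corollary~\ref{cor:dommultinomial} to convert these bounds into positive-probability statements about the actual walker positions, and invoke Fact~\ref{fact: decay} to handle the small mismatch between the time-$t$ balance hypothesis and the time-$(t-1)$ crossing calculation (and to control the local over-concentration of walkers, bounded pointwise by $Y_u \le d_u\log n$ from the balance definition).
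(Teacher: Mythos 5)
Your overall architecture matches the paper's: a dichotomy between (a) a vertex where both classes have macroscopic expected occupation at time $t$, where one meets them directly, and (b) a spatial separation of the two classes at time $t$, where one crosses the cut between their supports at the last step and uses the negative correlation of Corollary \ref{cor:dommultinomial} plus one-sided Chebyshev to turn expectations into probabilities. Two points, however, need attention, and the first is a genuine gap.

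Your dichotomy is not exhaustive: it omits the case $V_A=\eset$, which is in fact the generic one. If the minority class $A$ is diffuse --- e.g.\ a single walker, or any class with $\mu_v^A<c_1\bar\pi_v$ for \emph{every} $v$ --- then no vertex is balanced \emph{and} $V_A=\eset$, so $V_B=V$ and there is no cut to cross; both branches of your argument return nothing, yet this is exactly the situation of a typical small class. The repair is the paper's first case, which is a one-factor rather than a two-factor bound: each walker of $A$ occupies \emph{some} vertex $v$ at time $t$ with probability one (so no factor $1-e^{-\mu_v^A}$ is paid), and there $\mu_v^B\ge(1-\delta)\bar\pi_v-\mu_v^A\ge(1-\delta-c_1)\bar\pi_v$, whence by Lemma \ref{lem: independentBer} a meeting occurs with probability at least $1-e^{-(1-\delta-c_1)\bar\pi_v}\ge c\bar\pi_v\ge c/d$. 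This one-factor device is also why the paper loses only $d^{1+\xi}$ rather than $d^{2}$ overall. A smaller bookkeeping issue: only $V_A$, not $V_B$, can be shown to satisfy $\sum_{v\in V_A}\bar\pi_v\le\sfrac23(n+n^{2/3})$ (since $|B|$ may be close to $N$); fortunately one side suffices to invoke the cut hypothesis, as $V_B$ is the complement of $V_A$ in this branch.

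Second, the transfer from the time-$t$ hypothesis to the time-$(t-1)$ crossing calculation cannot be done with Fact \ref{fact: decay}. That fact bounds $|P^t(x,u)-P^{t-1}(x,u)|$ by $O(d_u/\sqrt{t})$ per walker, and summing over the up to $N/2$ walkers of $A$ gives an error of order $n d_u\delta^2/(d^{\xi}\log n)$, vastly larger than the target $\bar\pi_u=d_u/d$. The correct tool is Lemma \ref{lem:neighbormass}, which uses laziness (deleting one lazy step from each typical path) to obtain the pointwise inequality $\mathbb{E}[Y_u^A(t-1)]\ge\sfrac13(\mathbb{E}[Y_u^A(t)]-Y\exp(-c_1t))$; the bound $Y_x(0)\le d_x\log n$ from \eqref{eq: defbalanced2}, which you mention, is exactly what controls its error term. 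As Remark \ref{r:neighbormass} notes, this is the only place laziness enters the proof of \eqref{eq: upper}, so the crossing step genuinely fails without it. With these two repairs your argument coincides with the paper's.
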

\begin{proof}
 Consider some configuration $(X_v)_{v \in V}$ of walkers (i.e.~for all $v$ the number of walkers at vertex $v$ is $X_v$) with at most $n+n^{2/3}$ walkers (i.e.\ $\sum_v X_v \le n+n^{2/3}$) and a vertex-respecting partition of the walkers into disjoint sets   $\mathcal{A}_1,\ldots,\mathcal{A}_m$. Since both the properties of a configuration which we consider (being marginally $(\alpha,t)$-merging and $(\delta,t)$-balanced) are independent of the time, by stationarity of the occupation measure w.l.o.g.~we may assume that this configuration corresponds to time 0.
We start with part (i). 

Let $\alpha_d:=\bar c/d^{1+\xi}$, where $\bar c>0$  shall be determined soon. We now fix some $1 \le i \le m$ such that $|\mathcal{A}_i| \le (n+n^{2/3})/2 $ and show that the probability that at least one walker from $\mathcal{A}_i$ meets some walker not from $\mathcal{A}_i$  by time $t$ is at least $\alpha_d$.  Denote the collection of vertices occupied by the walkers from $\mathcal{A}_j$ (at time 0) by $V_j$ (where $1 \le j \le m$). Denote the expected number of walkers from $\mathcal{A}_j$ (resp.\ not from $\mathcal{A}_j $, i.e.\ from $\cup_{i:i \neq j}\mathcal{A}_i $) at vertex $v$ at time $t$ by \[\mu_v(j):= \sum_{u \in V_j}P^t(u,v)X_u, \quad (\text{resp. }a_v(j):=\sum_{i:i \neq j}\mu_v(i)=\sum_{i:i \neq j}\sum_{u \in V_i}P^t(u,v)X_u).\]   As $\sum_{v \in V} \mu_v(j)=|\mathcal{A}_j|$, there can be at most one set $\mathcal{A}_j$ with $\mu_v(j)>(1-2 \delta )\bar \pi_v  $ for all $v$ (assuming $n\ge 8$; where we have used $\gd \le 1/8$ and $\sum_{v \in V} X_v \le n+n^{2/3}$). Hence we only have to treat the case that  $\mu_v(i) \le (1-2 \delta ) \bar \pi_{v}  $ for all $v$ and the case that $\mu_v(i) \le (1-2 \delta ) \bar \pi_v$ holds for some of the vertices, but not for all.   
 
Assume that  $\mu_v(i) \le (1-2 \delta ) \bar \pi_v  $ for all $v$. Then since the configuration is $\delta$-balanced
\begin{equation}
\label{re: avi}
\forall  v \in V, \quad  a_v(i)\ge (1-\delta) \bar \pi_v - \mu_v(i) \ge \delta \bar  \pi_v = \sfrac{ \delta d_{v}}{d}. 
\end{equation}  
An elementary calculation (see Lemma \ref{lem: independentBer}) now shows  that for all $v$ the probability that there is a walker not from $\mathcal{A}_i$ at $v$ at time $t$ is at least $1-e^{-a_v(i)}$. Hence trivially every walker from the set $\mathcal{A}_i$ has a chance of at least $1-e^{-a_v(i)}$ of meeting some walker not from $\mathcal{A}_i$ at time $t$. In this case, the proof is concluded using \eqref{re: avi}. 

Finally, assume that $\mu_v(i) > (1-2 \delta ) \bar  \pi_v  $ for some $v$ but not for all $v$. We argue that in this case, there must be some vertex $u$ such that for some absolute constant $c_1>0$
\begin{equation}
\label{eq:u}
 c_1  /d \le \mu_u(i) \le (1-2 \delta)\bar \pi_u.
\end{equation}
Once this is establish, the proof is concluded as follows. Let $u$ be as above. Since the configuration is $(\delta,t)$-balanced we have that 
$$a_u(i)\ge (1-\delta) \bar  \pi_u - \mu_u(i) \ge  \delta  \bar  \pi_u   . $$     
By \eqref{eq:u} together with the elementary Lemma \ref{lem: independentBer} below, the probability that $u$ is occupied at time $t$ by both a walker from $\mathcal{A}_i$ and also by a walker not from $\mathcal{A}_i$ is at least
\begin{equation}
\label{eq: cd4}
(1-e^{-\mu_u(i) })(1-e^{-a_u(i) }) \ge \sfrac{1}{4} \mu_u(i)a_u(i) \ge \sfrac{c_{1}}{4d}\delta \bar \pi_u  \ge \bar c /d^{1+\xi} \quad 
\end{equation}
as desired (where we have used $1-e^{-x} \ge x-\half x^2 \ge \sfrac{x}{2}$ for $x \in [0,1]$).
We now verify the existence of such $u$ satisfying \eqref{eq:u}. Let
\begin{equation}
\label{e:Didef}
D_i:=\{v:\mu_v(i) \ge (1-2\delta) \bar \pi_v \}.
\end{equation}
We argue that if $u \notin D_i$ and $(v,u) \in E $ for some $v \in D_i$, then $u$ satisfies \eqref{eq:u} (by our assumption on $\mathcal{A}_i$ the set $D_i$ is non-empty and is a proper subset of $V$). This follows from Lemma \ref{lem:neighbormass} below (whose proof is deferred to the appendix \S\ref{s:proofl4.X}; See Remark \ref{r:neighbormass} for the intuition behind this lemma), applied to the collection of walkers from $\mathcal{A}_i$ (i.e.\ take $Y_y(0)$ in Lemma \ref{lem:neighbormass} to be 0 if the walkers which are at time 0 at vertex $y$ do not belong to $\mathcal{A}_i$, otherwise take it to be the number of such walkers; Note that condition \eqref{eq: defbalanced2} from the definition of a $(\delta,t)$-balanced configuration implies that the term $Y  \exp (-c_{1} t )$ from Lemma \ref{lem:neighbormass} is  much smaller than the term $\mathbb{E}[Y_v(t)]$).

\medskip

We now prove part (ii). We now assume $G$ is regular, and so $\bar \pi_v=1$ for all $v$.  We fix some $i$. The case that  $\mu_v(i) \le 1-2 \delta $ for all $v$ is the same as before. We now treat the case that $\mu_v(i) > 1-2 \delta   $ for some $v$ but not for all $v$. We may assume that $|\mathcal{A}_i| \le \sfrac{1}{2}(n+n^{2/3}) $, as there can be at most one class which contains more than half of the walkers. Let $D_i$ be as in \eqref{e:Didef}. Observe that $|D_i| \le \frac{ |\mathcal{A}_i|}{(1-2 \delta )} \le  \sfrac{2}{3}(n+n^{2/3}) $ (as $\delta \le 1/8$)  and so $\max \{ | \partial_{E}^{\mathrm{out}}D_i|, | \partial_{E}^{\mathrm{in}}D_i| \} \ge \sfrac{c_0d}{2}$, where $   \partial_{E}^{\mathrm{out}}A:=\{(u,v) \in E : u \in A ,v \notin A \} $ and $   \partial_{E}^{\mathrm{in}}A:=\{(v,u) \in E : u \in A ,v \notin A \} $. We first treat the case that $ | \partial_{E}^{\mathrm{out}}D_i|\ge \sfrac{c_0d}{2}$.  

For each $e=(v,u)  $ let $Z_{e}$ be the indicator of the event that there is some walker from $\mathcal{A}_i$ which was at $v $ at time $t-1$ and then moved to $u$ at time $t$. Let $Z:=\sum_{e \in \partial_{E}^{\mathrm{out}}D_i  }Z_{e}$. As in the proof of part (i), by Lemma \ref{lem:neighbormass}, there exists some $c>0$ such that $\mathbb{E}[Z_e]>c/d $ for all $e \in  \partial_{E}^{\mathrm{out}}D_i$. Hence $\mathbb{E}[Z ] \ge c_1 c_{0} $. By Fact \ref{cor:dommultinomial} we have that $(Z_{e})_{e \in E } $ are negatively correlated, and thus $\mathrm{Var}(Z) \le \mathbb{E}[Z ]  $. Thus, using the one-sided Chebyshev's inequality \eqref{eq: onesidedChebyshev} we get that $\Pr[Z>0 ] \ge \sfrac{ \mathbb{E}[Z]}{1+ \mathbb{E}[Z]}  \ge c_2 c_0$. Conditioned on $Z_e=1$ for  $e=(v,u) \in  \partial_{E}^{\mathrm{out}}D_i$, as in the proof of part (i), the probability that there is a walker not from $\mathcal{A}_i$ at $u$ at time $t$ is bounded from below (as $u \notin D_i$ and the configuration is $(\delta,t)$-balanced). This concludes the proof in the case that  $ | \partial_{E}^{\mathrm{out}}D_i|\ge \sfrac{c_0d}{2}$.

Now consider the case that  $ | \partial_{E}^{\mathrm{in}}D_i|\ge \sfrac{c_0d}{2}$. For each $e=(v,u)  $ let $\widehat Z_{e}$ be the indicator of the event that there is some walker from $\cup_{j:j \neq i} \mathcal{A}_j$ which was at $v $ at time $t-1$ and then moved to $u$ at time $t$. Let $ \widehat Z:=\sum_{e \in \partial_{E}^{\mathrm{in}}D_i  }\widehat Z_{e}$. As in the proof of part (i), by Lemma \ref{lem:neighbormass}, there exists some $c>0$ such that $\mathbb{E}[\widehat Z_e]>c/d $ for all $e \in  \partial_{E}^{\mathrm{in}}D_i$. Hence $\mathbb{E}[\widehat Z ] \ge c_1 c_{0} $. By Fact \ref{cor:dommultinomial} we have that $(\widehat Z_{e})_{e \in E } $ are negatively correlated. Thus as before, using the one-sided Chebyshev's inequality \eqref{eq: onesidedChebyshev}, we get that $\Pr[\widehat Z>0 ]>c_2 c_{0} >0$. Conditioned on $\widehat Z_e=1$ for  $e=(v,u) \in  \partial_{E}^{\mathrm{in}}D_i$, as in the proof of part (i), the probability that there is a walker  from $\mathcal{A}_i$ at $u$ at time $t$ is bounded from below (as $u \in D_i$). 
\end{proof}

\begin{lemma}
\label{lem: independentBer}
Let $\xi_1,\ldots, \xi_m$ be independent Bernoulli random variables. Denote $p_i:=\E[\xi_i]$,  $S:=\sum_i \xi_i $ and $\mu=\mathbb{E}[S]$. Then $ \Pr[S=0 ]=\prod_{i=1}^m (1-p_{i}) \le \exp[\sum_{i=1}^m - p_i] = e^{-\mu}.$
\end{lemma}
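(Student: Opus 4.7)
The plan is to prove this by direct computation; it is essentially an immediate consequence of independence together with the elementary inequality $1-x \le e^{-x}$.

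First I would observe that $\{S=0\}$ is the intersection $\bigcap_{i=1}^m \{\xi_i = 0\}$, since $S$ is a sum of nonnegative integer-valued (in fact $\{0,1\}$-valued) random variables. By the assumed independence of $\xi_1,\ldots,\xi_m$, this gives
\[
\Pr[S=0] \;=\; \prod_{i=1}^m \Pr[\xi_i = 0] \;=\; \prod_{i=1}^m (1-p_i),
\]
which is the first equality in the statement.

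Next, I would apply the standard inequality $1-x \le e^{-x}$, valid for all $x \in \mathbb{R}$ (and in particular for $x = p_i \in [0,1]$), to each factor separately. This yields
\[
\prod_{i=1}^m (1-p_i) \;\le\; \prod_{i=1}^m e^{-p_i} \;=\; \exp\!\Bigl(-\sum_{i=1}^m p_i\Bigr) \;=\; e^{-\mu},
\]
where the final equality uses linearity of expectation, $\mu = \sum_i p_i$. Combining the two displays gives the claim.

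There is no real obstacle here; the only thing to mention is that the inequality $1-x \le e^{-x}$ is the convexity-based estimate that one always invokes for upper-tail-free factorizations of this kind, and no hypothesis on $p_i \in [0,1]$ (beyond what is implicit in being a Bernoulli parameter) is used beyond this bound.
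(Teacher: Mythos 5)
Your proof is correct and is exactly the argument the paper intends (the lemma's statement already encodes it): factor $\Pr[S=0]$ by independence and bound each factor via $1-x\le e^{-x}$. Nothing further is needed.
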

\begin{lemma}
\label{lem:neighbormass}
Let $G=(V,E)$ be a finite Eulerian graph. Let $Y_v(t) $ be the number of walkers which occupy vertex $v$ at time $t$, where $(Y_v(0))_{v \in V}$ is some arbitrary deterministic initial configuration of walkers and different walkers perform independent LSRWs. Let $(v,u) \in E $. Let $Y:= \max_{x \in V } \sfrac{ d_{v} }{d_{x}}Y_x(0) $. Then
\begin{equation}    
\label{e:neighbormass00}
2d_v\mathbb{E}[Y_u(t)] \ge \mathbb{E}[Y_v(t-1)] \ge \sfrac{1}{3}(\mathbb{E}[Y_v(t)]-Y  \exp (-c_{1} t ) ).
\end{equation}
\end{lemma}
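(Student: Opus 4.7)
The first inequality is a direct one-step calculation: any walker at vertex $v$ at time $t-1$ moves to $u$ at time $t$ with probability $P(v,u)=1/(2d_v)$, so by linearity of expectation $\E[Y_u(t)] \ge \tfrac{1}{2d_v}\E[Y_v(t-1)]$.

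For the second inequality, the plan is to dualize via reversibility and then exploit the laziness of the walk. Set $\widetilde Y_x := Y_x(0)/d_x$ and $h_s(v):=(P^s \widetilde Y)(v)$, where $P$ denotes the LSRW kernel. Reversibility on an Eulerian graph gives $\E[Y_v(s)] = d_v\,h_s(v)$, while $L^{\infty}$-contractivity of $P$ gives $\|h_s\|_\infty \le M:=\|\widetilde Y\|_\infty = Y/d_v$. The claim becomes
\[ h_t(v) \;\le\; 3\,h_{t-1}(v) + M\,e^{-c_1 t}, \]
which, since $h_t=Ph_{t-1}$ and $P(v,v)=\tfrac12$, is equivalent to showing $\tfrac{1}{d_v}\sum_{y\sim v}h_{t-1}(y) \le 5\,h_{t-1}(v)+2M e^{-c_1 t}$. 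I would prove this via a near/far decomposition in the starting vertex $x$: write $\widetilde Y = \widetilde Y\mathbf{1}_{B} + \widetilde Y\mathbf{1}_{B^{c}}$ where $B$ is the graph-ball of radius $R:=\lfloor t/4\rfloor$ around $v$. Any walker starting outside $B$ that contributes to $h_s(z)$ for $z\in\{v\}\cup N(v)$ and $s\in\{t-1,t\}$ must realize at least $R$ non-lazy steps among $t$ attempts, which by Hoeffding applied to the $\mathrm{Binomial}(t,\tfrac12)$ count of non-lazy steps has probability at most $e^{-ct}$; hence the far piece is uniformly bounded by $Me^{-ct}$. For the near piece, the laziness identity $h_{t-1}(v)\ge \tfrac{1}{2d_v}h_{t-2}(y)$ for $y\sim v$ gives $h_{t-2}(y)\le 2d_v\,h_{t-1}(v)$; plugging this into $h_{t-1}(y)=\tfrac12 h_{t-2}(y)+\tfrac{1}{2d_y}\sum_{w\sim y}h_{t-2}(w)$ and iterating this local smoothing a constant number of times inside $B$, with $\|h\|_\infty\le M$ used to close the recursion, yields a comparison of the shape $h_{t-1}^{\mathrm{near}}(y)\le 5\,h_{t-1}^{\mathrm{near}}(v)+M e^{-c_1 t}$. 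Summing the near and far contributions gives the claim.

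The main obstacle will be carrying out the near-part comparison with a genuinely \emph{universal} constant $c_1$ independent of the underlying graph, since standard spectral-gap estimates are unavailable uniformly over all Eulerian graphs. The iteration must be executed by hand, leveraging only the graph-universal heat-kernel bound of Fact~\ref{fact: decay} together with the lazy-backflow identity above; the residual after $O(1)$ smoothing steps is absorbed into the exponential error term via the pointwise bound $\|h_s\|_\infty\le M$, which in turn is what forces the error to be proportional to $Y$ rather than to the total number of walkers.
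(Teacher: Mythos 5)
Your first inequality is exactly the paper's (one lazy step from $v$ to $u$ has probability $1/(2d_v)$), and your reformulation of the second inequality as $h_t(v)\le 3\,h_{t-1}(v)+M e^{-c_1 t}$ is a correct restatement of what must be shown. But the proposal does not actually prove that statement, and the route you sketch has two concrete problems. First, the far-field bound fails as stated: with $R=\lfloor t/4\rfloor$, a walker starting outside the ball of radius $R$ needs only about $t/4$ non-lazy steps out of $t$ to reach $v$, and since the number of non-lazy steps is $\mathrm{Bin}(t,\frac12)$ with mean $t/2$, the probability of making at least $t/4$ of them is close to $1$, not $e^{-ct}$; you would need $R\ge(\frac12+\eps)t$. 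Second, and more seriously, the near-part ``local smoothing iteration'' is where the entire content of the lemma lives, and it is asserted rather than derived. The one concrete step you give, $h_{t-2}(y)\le 2d_v\,h_{t-1}(v)$, already carries a factor $2d_v$, and each further application of $h_{t-1}(y)=\frac12 h_{t-2}(y)+\frac{1}{2d_y}\sum_{w\sim y}h_{t-2}(w)$ pushes you to vertices farther from $v$ and earlier in time while accumulating more degree factors; nothing in the sketch produces the degree-free constant $5$. Note also that $\frac{1}{2d_v}\sum_{y\sim v}h_{t-1}(y)=h_t(v)-\frac12 h_{t-1}(v)$, so the averaged quantity you are trying to bound \emph{is} $h_t(v)$ up to the diagonal term: any purely spatial comparison of $h_{t-1}$ at a single neighbor $y$ against $h_{t-1}(v)$ must incur degree factors (consider a star), so the argument has to exploit time, not space.

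The paper's proof supplies exactly this missing temporal ingredient, and it is combinatorial rather than analytic. Decompose each length-$t$ walk ending at $v$ according to its non-lazy skeleton $\gamma'$ (the path obtained by deleting the lazy steps). Walks whose skeleton has length more than $2t/3$, i.e.\ with fewer than $t/3$ lazy steps, contribute at most $Y e^{-c_1 t}$ to $\mathbb{E}[Y_v(t)]$ by a binomial tail bound combined with stationarity of $\pi$ under the non-lazy kernel (no reversibility is used, which matters because the lemma is invoked for Eulerian digraphs, where your identity $\mathbb{E}[Y_v(s)]=d_v(P^s\widetilde Y)(v)$ need not hold). For every remaining skeleton $\gamma'$, the length-$t$ walks with skeleton $\gamma'$ have at least $t/3$ lazy steps, and deleting one lazy step maps their total weight onto that of the length-$(t-1)$ walks with the same skeleton and endpoint at a multiplicative cost of at most $\sup_{k\in[t/3,t]}\binom{t}{k}/\binom{t}{k-1}=\sup_{k\in[t/3,t]}\frac{t-k+1}{k}\le 3$. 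Summing over skeletons gives $\mathbb{E}[Y_v(t-1)]\ge\frac13(\mathbb{E}[Y_v(t)]-Ye^{-c_1t})$ directly. If you want to salvage your framework, the statement you must prove for the near part is precisely this deletion estimate; the heat-kernel bound of Fact~\ref{fact: decay} will not produce a universal constant here.
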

\begin{remark}
\label{r:neighbormass}
The proof of Lemma \ref{lem:neighbormass} relies on laziness in a crucial manner (in fact, this is the only use of laziness in the proof of \eqref{eq: upper}). The idea is that typically a lazy walk of length $t$ makes at least $t/3$ lazy steps (the error term $Y  \exp (-c_{1} t )$ corresponds to the total contribution to $\mathbb{E}[Y_v(t)]$ of the paths that make less than $t/3$ lazy steps). Such a walk could be transformed into many walks of length $t-1$ with the same end-point by deleting one of the lazy steps. The ``cost" of this operation is at most a constant factor (this uses the fact that there are at least $\Omega(t)$ lazy steps). In fact, some algebra yields that it is at most $\sup_{k \in \N :k \in [t/3,t]} \binom{t}{k}/\binom{t}{k-1} \le 3$.  
\end{remark}
\subsection{Proof of proposition \ref{prop: larged}}
\label{s: larged}
{\em Proof:} By \cite[Lemma 2.4]{PS} there exists some $C>0$ such that for  $t:=\lceil C |E||V|  \rceil $   $$P^{t}(x,y) \ge \pi_y/2, \quad \text{for all }x,y \in V. $$
Hence for every pair of walkers with some arbitrary initial positions we have that the probability that they are at the same position at time $t$ is at least $\sum_{x \in V}(\pi_x/2)^2 \ge \frac{1}{4|V|}(\sum_x \pi_x)^2 = \frac{1}{4|V|}$. We may assume that the total number of walkers $Y$ is greater than $3|V|/4$, as the probability that this fails is exponentially small in $|V|$. Under this assumption, it follows from Lemma \ref{lem: 2ndmoment} that there exists some constant $\alpha>0$ such that for every collection of $1+\frac{1}{2}\lceil 3|V|/4 \rceil$ walkers with some arbitrary initial conditions, the probability that the first walker met at time $t$ one of the other walkers in the collection is at least $\alpha$. It follows that (if $Y>\frac{3|V|}{4}$) the configuration of walkers at each fixed time $s$, deterministically, has the $(\alpha,t)$-merging property. The proof is now concluded using Proposition \ref{p:derof1.1}.     \qed

\subsection{Proof of \eqref{eq: upper} for regular $G$}
\label{s:reglineard}
In this subsection we prove \eqref{eq: upper} when $G$ is regular. This is done by combining Proposition \ref{prop: main2} below, which refines Proposition \ref{prop: main}, with Proposition \ref{p:derof1.1}. 

Let  $G=(V,E) $ be a connected $d$-regular $n$-vertex Eulerian digraph.   Let 
\begin{equation}
\label{e:t*G}
t_*(G):= \max \{\hat t_*, \lceil \sfrac{2}{c_1}\log \log n \rceil \}, \text{ where } \hat t_*:=\inf\{t: \max_{x,y}|P^t(x,y)- \sfrac{1}{n} |\le \sfrac{1}{ 3 \cdot 2^8 \log n}\} \end{equation}
where  $c_1>0$ is as in Lemma \ref{lem:neighbormass}. Note that as the walk is lazy, the probability it stays put for $\lfloor  \log_{2} (c\log n) \rfloor $ steps is at least $\sfrac{1}{c \log n}$ and thus $t_*(G) \le C\hat t_*$. Recall that by Fact \ref{fact: decay}   \[ t_*(G) \le C' ( \log n)^2.\] The choice of the constant $3 \cdot 2^8  $ was made so that by Corollary \ref{cor:LDforfuture}, provided that (*) holds for $\delta=1/8$, we have that the configuration of walkers is $(\sfrac{1}{8},t_*(G))$-balanced w.p.\ at least $1-2n^{-2} $. The choice of the constant $\sfrac{2}{c_1}  $ was made to ensure that for a  $(\sfrac{1}{8},t_*(G))$-balanced configuration we have in Lemma \ref{lem:neighbormass} that $\mathbb{E}[Y_v(t)] - Y  \exp (-c_{1} t ) \ge \frac{1}{2}\mathbb{E}[Y_v(t)]  $.  

 In this subsection we improve Proposition \ref{prop: main} as follows. 
\begin{proposition}    
\label{prop: main2}
Let $G=(V,E)$ be a connected  $d$-regular $n$-vertex Eulerian digraph satisfying (*) for $\delta=\sfrac{1}{8}$.   Assume that a configuration of walkers is fully $(\sfrac{1}{8},t_*(G))$-balanced  and that the   total number of walkers it contains is smaller than $n+n^{2/3}$. Then there exists some absolute constant $\bar c >0 $ such that the following hold.
\begin{itemize}
\item[(i)]
The configuration is globally $(\bar c/d,\bar c,t_*(G))$-merging.
\item[(ii)] If $|\{(u,v) \in E : u \in A,v \notin A  \text{ or }u \notin A,v \in A \}| \ge c_0 d $ for every set $A \subset V$ such that $|A| \le \sfrac{2}{3} (n+n^{2/3}) $ then the configuration is marginally $(\bar c c_{0},t_*(G))$-merging.
 \end{itemize}
\end{proposition}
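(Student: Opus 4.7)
The plan is to refine the proof of Proposition \ref{prop: main} by exploiting the additional upper bound $\sum_u Y_u P^{t_*}(u,v) < (1+2\delta)\bar\pi_v$ supplied by the \emph{fully} $(\tfrac18,t_*)$-balanced hypothesis, which has no counterpart in Proposition \ref{prop: main}. Part (ii) carries over essentially verbatim from Proposition \ref{prop: main}(ii) with $t$ replaced by $t_*(G)$: the defining properties of $t_*(G)$ (the bound on $\max_{x,y}|P^{t_*}(x,y)-1/n|$ and the $\lceil 2/c_1 \log\log n\rceil$ term) are calibrated so that \eqref{eq: px,y3} and Lemma \ref{lem:neighbormass} still deliver the same quantitative input, and the edge-expansion / negative-correlation / one-sided Chebyshev scheme from Proposition \ref{prop: main}(ii) then goes through unchanged.

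For part (i), I would classify the classes $\mathcal{A}_1,\ldots,\mathcal{A}_m$ via their mass profiles $\mu_v(i) := \sum_{u \in V_i} P^{t_*}(u,v) X_u$ and dominance sets $D_i := \{v : \mu_v(i) > (1-2\delta)\bar\pi_v\}$. Because $\delta=\tfrac18$ makes $2(1-2\delta)>1+2\delta$, the fully-balanced upper bound forces the $D_i$ to be pairwise disjoint; in particular at most one class is macroscopic ($D_i=V$). Each remaining class is then either \emph{spread-out} ($D_i=\emptyset$, Case A of Proposition \ref{prop: main}(i)), with merge probability $\ge 1-e^{-\delta}$ (an absolute constant), or \emph{peripheral} ($\emptyset\ne D_i\ne V$, Case B), with merge probability $\ge c/d$. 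Writing $\xi_i$ for the indicator that $\mathcal{A}_i$ merges with some other class by time $t_*(G)$ and $S := \sum_i \xi_i$, this classification gives $\mu := \mathbb{E}[S] \ge c(m-1)/d$.

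The crux of the argument is upgrading $\mu \ge c(m-1)/d$ to a \emph{constant}-probability lower bound $\Pr(S \ge \bar c(m-1)/d) \ge \bar c$, which is what saves the factor of $d$ relative to the naive Corollary \ref{cor:glob}. I would rely on negative correlation of the walkers' end-positions (Corollary \ref{cor:dommultinomial}): since $\{\xi_i=0\}$ is a decreasing event in the position-configuration of the walkers outside $\mathcal{A}_i$ and (after Poisson-thinning) the classes are mutually independent, this should yield $\mathrm{Var}(S) \le \mu$, whence the one-sided Chebyshev inequality \eqref{eq: onesidedChebyshev} gives $\Pr(S \ge \mu/2) \ge $ const once $\mu$ exceeds a fixed constant. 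The residual small-$\mu$ regime (equivalently $m=O(d)$) reduces to obtaining at least one merge with constant probability; this is automatic as soon as any spread-out class is present (Case A with constant merge probability), and the remaining case in which only peripheral classes exist I expect to handle by a direct second-moment argument on the number of walker-pair meetings at time $t_*$, $N := \sum_{i<j}\sum_v \mu_v(i)\mu_v(j)$, together with the upper bound $\mathrm{Var}(N) = O(\sum_v \mathbb{E}[Y_v(t_*)^3]) = O(n)$ coming from Poisson moments. Making the negative-correlation argument for $\xi_i$ fully rigorous --- since $\xi_i$ depends on intermediate-time meetings rather than only on the terminal configuration --- is the most delicate technical point of the plan, and I expect to circumvent it either by a coupling reducing ``meeting by time $t_*$'' to an event measurable w.r.t.\ the configuration at a single time, or by a stepwise second-moment bookkeeping across the $t_*$ time slots.
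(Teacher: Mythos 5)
Your outline reproduces the paper's architecture: part (ii) is indeed verbatim from Proposition \ref{prop: main}(ii) with $t$ replaced by $t_*(G)$, and for part (i) you correctly identify the two decisive ingredients --- the sets $D_i$, whose pairwise disjointness follows from the \emph{fully} balanced upper bound because $2(1-2\delta)>1+2\delta$, and a negative-correlation plus one-sided Chebyshev argument to beat the $\alpha/4$ loss of Corollary \ref{cor:glob}. However, the step you yourself flag as ``the most delicate technical point'' is exactly where your plan stops short of a proof, and neither of your proposed workarounds is developed enough to close it. The paper's resolution is concrete and you should note it: one does \emph{not} attempt to prove negative correlation for the full merge indicators $\xi_i$ (which depend on meetings at all intermediate times). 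Instead, for each peripheral class one designates a single boundary vertex $v_j\in F_j:=\{v\in D_j:\exists\, u\notin D_j,\ (u,v)\in E\}$ and replaces $\xi_j$ by the smaller event $Z_j$ that $v_j$ is occupied \emph{at the single time $t_*(G)$} simultaneously by a walker from $\mathcal{A}_j$ and a walker not from $\mathcal{A}_j$. Since the $F_j$ are disjoint (this is precisely where ``fully'' balanced is used), the $v_j$ are distinct, so the $Z_j$ are functions of the occupancies of distinct vertices in the terminal configuration and Corollary \ref{cor:dommultinomial} applies directly to give $\mathrm{Var}(\sum_j Z_j)\le\mathbb{E}[\sum_j Z_j]$. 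This is your workaround (a), but its viability hinges on localizing each class's meeting event at a \emph{distinct} vertex, which you do not say.

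Two further points. First, the paper does not mix the two types of classes into one sum $S$ with $\mu\ge c(m-1)/d$; it observes that at least half of the $m-1$ non-macroscopic classes are of a single type and treats each pure case separately. The all-spread-out case then needs no correlation control at all: each such class merges with constant probability, and \eqref{eq: 2ndm} (Paley--Zygmund, valid for arbitrary indicator sums) already gives a constant-probability lower bound on the number of merges. Your mixed-sum version forces you into the ``residual small-$\mu$ regime,'' and the fallback you sketch there --- a second moment on the pair-meeting count $N$ --- does not obviously deliver a \emph{constant} success probability when all classes are peripheral and $m=O(d)$, since then $\mathbb{E}[N]$ can itself be of order $1/d$. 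Second, your parenthetical claim that ``after Poisson-thinning the classes are mutually independent'' is not available here: the configuration $(X_v)$ is an arbitrary deterministic one and the partition is adversarial, so all you have is the negative correlation of Corollary \ref{cor:dommultinomial}, which is exactly why the localization at distinct vertices matters.
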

\begin{proof}
 We first prove part (i). Consider some configuration $(X_v)_{v \in V}$ of walkers  with at most $n+n^{2/3}$ walkers and a vertex-respecting partition of the walkers into disjoint sets   $\mathcal{A}_1,\ldots,\mathcal{A}_m$. Again, we may think of the current time as being time 0. Let $a_u(i)$ (resp.\  $\mu_u(i)$) be the expected number of walkers not from (resp.\ from)   $\mathcal{A}_i$ which occupy $u$ at time $t_{*}(G)$. First consider the case that there is a set $I \subseteq [m] $ of size at least $(m-1)/2 $ such that for all $i \in I $ we have that $\mu_u(i) \le \sfrac{3}{4}$ for all $u$. Then from the proof of Proposition \ref{prop: main} we see that for each $i \in I $ the probability that some walker from $\mathcal{A}_i$ has met by time $t_*(G)$ some walker not from $\mathcal{A}_i$ is at least $c_3>0$. By Lemma \ref{lem: 2ndmoment} we have that with probability at least $c_3/4 $, there are at least $c_3 |I| $ sets $\mathcal{A}_i$ with $i\in I$ such that  some walker from $\mathcal{A}_i$ has met by time $t_*(G)$ some walker not from $\mathcal{A}_i$.   

Now consider the case that there is a set $J \subseteq [m] $ of size at least $(m-1)/2 $ such that for all $j \in J $ we have that $ \mu_u(j) \ge \sfrac{3}{4}  $ for some $u \in V $ but not for all $u \in V$. For each $j \in J $, as before, let $D_j:=\{v: \mu_u(j) > \sfrac{3}{4}\}$. Let \[F_j:=\{v \in D_j: \text{there exists some }u \notin D_j \text{ such that }(u,v) \in E \}.\] For every $j \in J$ fix some $v_j \in F_j$.
Observe that since the configuration is fully $(\sfrac{1}{8},t_*(G))$-balanced we have that the sets $(F_j)_{j \in J}$ are disjoint, and so $(v_j)_{j \in J}$ are distinct.

 Let $Z_j $ be the indicator of the event that some walker from $\mathcal{A}_j$  and also some walker not from $\mathcal{A}_j$ occupying $v_j$ at time $t_*(G)$. As in the proof of Proposition \ref{prop: main} we have that $\mathbb{E}[Z_j] \ge c_3 /d $. By Fact \ref{cor:dommultinomial} we have that $(Z_j)_{j \in J}$ are negatively correlated (indeed, for $i \neq j$ knowing that $Z_i=1$ decreases  the chances of both the requirements for $Z_j=1$). Hence by the one sided Chebyshev's inequality \eqref{eq: onesidedChebyshev}
$\Pr[\sum_{j \in J}Z_j > \sfrac{ c_3 }{2d} |J| ] \ge c_4>0 $ as desired.

The proof of part (ii) is identical to the proof of part (ii)
of Proposition \eqref{prop: main2}.
\end{proof}
 
\subsection{Deterministic initial configuration}
\label{s:deter}
Let $G=(V,E)$ be a  connected $d$-regular $n$-vertex Eulerian digraph. Consider the case that initially there is exactly one walker at each site and that the walkers perform independent LSRWs. Let $Y_v(s)$ be the number of particles occupying $v$ at time $s$. Let $t_*(G)$ be as in \eqref{e:t*G}. Assume $n$ is sufficiently large so that $\sfrac{1}{n} \le \sfrac{1}{ 3 \cdot 2^9 \log n} $ (which by the definition of  $t_*(G)$  implies that $\max_{x,y}P^{t_*(G)}(x,y) \le \sfrac{1}{ 2^9 \log n}$). We show that at each given time the configuration is fully $(\sfrac{1}{8},t_*(G) )$-balanced w.p.\ at least $2n^{-4}+n^{- \log \log n } $. From this, as in the proof of \eqref{eq: upper}, we get that $\whp$ $\SC(G) \le C d t_*(G) \log n\le C' d(\log n)^{3} $. 

The proof of the following proposition is  similar to that of Lemma \ref{lem: Poissum}. The main difference is that now the number of walkers at distinct vertices are not independent and thus one has to rely on \eqref{e:negaff} in order to bound $\mathbb{E}[\exp(\la Z_v(s))]$ from above.
\begin{proposition}
\label{prop:deterministic}
 Let $Z_v(s):=\sum_{u \in V }Y_u(s)P^{t_*(G)}(u,v) $. In the above setup we have that  
\begin{equation}
\label{e:laplaceZvs}
\forall s \ge 0, \quad  \Pr[Z_{v}(s) \notin ( \sfrac{7}{8},\sfrac{10}{8}) ] \le2n^{-4}. \end{equation}
\begin{equation}
\label{e:laplaceZvs'}
\forall s \ge  0, \quad  \Pr[Y_{v}(s) \ge \log n ] \le  n^{- \log \log n }. \end{equation}
\end{proposition}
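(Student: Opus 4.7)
The plan is to mimic the proof of Lemma~\ref{lem: Poissum}, which is a Chernoff-style bound on the Laplace transform $\mathbb{E}[\exp(\pm \lambda Z_v(s))]$. The only obstacle is that the random variables $(Y_u(s))_{u \in V}$ are no longer independent across $u$, as they were in the Poisson setup. However, if $X_w(s)$ denotes the position at time $s$ of the walker starting at vertex $w$, then $(X_w(s))_{w \in V}$ is a family of independent $V$-valued random variables and $Y_u(s) = \sum_{w \in V}\mathbf{1}\{X_w(s)=u\}$, so Corollary~\ref{cor:dommultinomial} applies and its negative-association inequality \eqref{e:negaff} is precisely the tool needed to decouple the $Y_u(s)$'s in a Laplace-transform estimate.

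Concretely, for any $\lambda \in \mathbb{R}$ the functions $y \mapsto \exp(\lambda P^{t_*(G)}(u,v) y)$ are all simultaneously increasing (if $\lambda>0$) or all decreasing (if $\lambda<0$) in $y$, so \eqref{e:negaff} yields
\[
\mathbb{E}[\exp(\lambda Z_v(s))] \le \prod_{u \in V} \mathbb{E}[\exp(\lambda P^{t_*(G)}(u,v) Y_u(s))].
\]
For fixed $u$, $Y_u(s)$ is a sum of independent Bernoullis with $\mathbb{E}[Y_u(s)] = \sum_{w \in V} P^s(w,u) = 1$, where the identity uses that $P^s$ is doubly stochastic because $G$ is regular and Eulerian. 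Combining the elementary bound $\mathbb{E}[e^{\mu \eta}] \le \exp(p(e^\mu - 1))$ for $\eta \sim \mathrm{Bernoulli}(p)$ with this identity gives $\mathbb{E}[\exp(\lambda P^{t_*(G)}(u,v) Y_u(s))] \le \exp(e^{\lambda P^{t_*(G)}(u,v)} - 1)$, hence
\[
\mathbb{E}[\exp(\lambda Z_v(s))] \le \exp\!\Bigl(\,\sum_{u\in V}(e^{\lambda P^{t_*(G)}(u,v)} - 1)\Bigr).
\]
This is exactly the Laplace-transform bound produced in the independent-Poisson setting, so the remainder of the argument is a verbatim repetition of Lemma~\ref{lem: Poissum}. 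Setting $p_{\max}:=\max_u P^{t_*(G)}(u,v)\le 1/(2^9 \log n)$, using $\mathbb{E}[Z_v(s)]=1$, and optimizing $\lambda$ with deviation $1/8$ on the lower side and $1/4$ on the upper side produces $\Pr[Z_v(s)\le 7/8]\le n^{-4}$ and $\Pr[Z_v(s)\ge 10/8]\le n^{-4}$, and a union bound gives \eqref{e:laplaceZvs}.

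For \eqref{e:laplaceZvs'} no negative-association device is needed: $Y_v(s)=\sum_{w\in V}\mathbf{1}\{X_w(s)=v\}$ is \emph{itself} a sum of independent Bernoullis with mean $1$, so $\mathbb{E}[e^{\lambda Y_v(s)}]\le \exp(e^\lambda-1)$ and a direct Chernoff bound, optimized near $\lambda=\log\log n$, yields the claimed $n^{-\log\log n}$-type tail. The real technical point is thus the application of \eqref{e:negaff} in the first step above; once that has decoupled the $Y_u(s)$'s, the argument reduces to the Poisson-case calculation. The two places to double-check carefully are (i) the double-stochasticity identity $\sum_w P^s(w,u)=1$, which genuinely needs both regularity and the Eulerian hypothesis, and (ii) that the numerical constant $1/(2^9\log n)$ in the bound on $p_{\max}$ combines with the deviations $1/8$ and $1/4$ to produce the stated $2n^{-4}$ error.
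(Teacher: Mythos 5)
Your proposal is correct and follows essentially the same route as the paper's proof: the key step in both is the negative-association inequality \eqref{e:negaff} from Corollary \ref{cor:dommultinomial} to decouple the $Y_u(s)$'s in the Laplace transform, followed by the observation that each $Y_u(s)$ is a sum of independent Bernoullis with total mean $1$ (double stochasticity), after which the Chernoff computation of Lemma \ref{lem: Poissum} applies verbatim. The paper organizes the per-vertex bound by factoring over individual walkers and using $e^x-1\le x+x^2/(1+1_{x\le0})$ rather than passing through the Poisson-MGF form $\exp(e^{\la p_u}-1)$, but this is only a cosmetic difference.
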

\noindent \emph{Proof:}
 Let $v \in V$ and $s \ge0$ We first prove \eqref{e:laplaceZvs}. Let $\la \in [-2^9 \log n,2^9 \log n] $ to be determined later. By Fact \ref{cor:dommultinomial} and  \eqref{e:negaff}   we have that
\begin{equation}
\label{e:laplaceZvs2}
\mathbb{E}[\exp (  \la Z_v (s))]= \mathbb{E} \prod_{u \in V }\exp (  \la Y_u(s)P^{t_*(G)}(u,v)) \le \prod_{u \in V }\mathbb{E}[\exp (  \la Y_u(s)P^{t_*(G)}(u,v))].  \end{equation}

Note that $\exp (  \la Y_u(s)P^{t_*(G)}(u,v))= \prod_{y \in V} \exp (\la\xi_y(u,s) P^{t_*(G)}(u,v))]$, where $\xi_y(u,s)$ is the indicator of the event that the walker which initially occupied $y$ is at $u$ at time $s$. Then   \begin{equation}
\label{e:xiyus}
\mathbb{E}[\exp (  \la Y_u(s)P^{t_*(G)}(u,v))]=\prod_{y \in V} \mathbb{E}[\exp (  \la\xi_y(u,s) P^{t_*(G)}(u,v))].\end{equation}
Let $p_*:=\max \{ P^{t_*(G)}(x,y):x,y \in V \} \le \sfrac{1}{ 2^9 \log n} $. As $|\la|p_* \le 1 $ and $e^{x}-1 \le x+ \sfrac{x^2}{1+1_{x \le 0}}$ for $x \in [-1,1]$ we get that for all $y,u \in V $ we have that 

\[\mathbb{E}[\exp (  \la\xi_y(u,s) P^{t_*(G)}(u,v))]-1 \le P^s(y,u)(e^{ \la P^{t_*(G)}(u,v) }-1) \]
\[ \le P^s(y,u)[\la P^{t_*(G)}(u,v)+\sfrac{(\la P^{t_*(G)}(u,v))^2}{1+1_{\la \le 0}} ] \le P^s(y,u)\la P^{t_*(G)}(u,v)[1+\sfrac{\la p_{*}}{1+1_{\la \le 0}} ]. \]
Using $1+x \le e^x $, substituting the above in \eqref{e:xiyus} and using \eqref{e:laplaceZvs2}, together with the fact that the uniform distribution is stationary (and thus $\sum_y P^s(y,u)=1=\sum_u  P^{t_*(G)}(u,v) $) we get that  
 \[\mathbb{E}[\exp (  \la Z_v (s))] \le \exp ( \la +\sfrac{\la^{2} p_*}{1+1_{\la \le 0}}).  \]

Taking $\la=-\sfrac{1}{8p_*} $ and noting that $\Pr[Z_v(s) \le \sfrac{7}{8} ]=\Pr[\exp (  \la Z_v (s)) \ge e^{\sfrac{7 \la}{8}} ] $ we see that
\[\Pr[Z_v(s) \le \sfrac{7}{8} ]= \le \mathbb{E}[\exp (  \la Z_v (s))] e^{-\sfrac{7 \la}{8}} \le \exp (\la/8 +\sfrac{\la^{2} p_*}{2}])=\exp (-\sfrac{1}{2^{7}p_*} ) \le n^{-4}. \]
Taking $\la=\sfrac{1}{8p_*} $  and noting that $\Pr[Z_v(s) \ge \sfrac{10}{8} ]=\Pr[\exp (  \la Z_v (s)) \ge e^{\sfrac{10 \la}{8}} ] $ yields
\[\Pr[Z_v(s) \ge \sfrac{10}{8} ] \le \mathbb{E}[\exp (  \la Z_v (s))] e^{-\sfrac{10 \la}{8}} \le \exp (- \la/4 +\la^{2} p_*])=\exp (-\sfrac{1}{2^{6}p_*} ) \le n^{-8}. \]  
We now prove \eqref{e:laplaceZvs'}. It suffices to prove the following. Let $\xi_1,\xi_2,\ldots,\xi_m $ be independent Bernoulli r.v.'s. Let $p_i:=\mathbb{E}[\xi_i]$. Assume that $\sum_i p_i=1 $. Let $L>0$ and $\la =\log L $.  Then
\[\Pr[S> 1+L ] \le L^{-(L+1)}, \] 
where $S:=\sum_i \xi_i $. Indeed $\mathbb{E}[e^{\la \xi_i}]=1+p_i(e^{\la}-1) \le \exp ( p_i(e^{\la}-1))   $. As $\mathbb{E}[e^{\la S}]=\prod_i \mathbb{E}[e^{\la \xi_i}] $, \[\Pr [ S> 1+L ]\le\mathbb{E}[e^{\la S}] e^{- \la (1+L)}=\exp (e^{\la}-1) e^{- \la (1+L)} \le \exp(- (L+1) \log L ). \quad \text{\qed} \]    
\section{General lower bounds on the social connectivity time}
\label{s: lower}
Clearly we can bound $\SC$ from below by the minimal time by which every walker has met at least one other walker. This motivates the following definition.
\begin{definition}
We say that a walker remained \textbf{\emph{isolated}} for time $t$  if this walker has not met any other walkers up to
and including time $t$. We say that a walker is \textbf{\emph{lazy}} by time $t$ if this walker has not left her initial position by time $t$. We denote the event that there is an isolated lazy walker at vertex $v$ by time $t$ by $\mathrm{IL}_v(t)$. 
\end{definition}
 
Recall that by elementary properties of the Poisson distribution, conditioned on the total number of walkers being $m$, each of them independently starts at a random initial position chosen according to the stationary distribution. We denote the corresponding probability and expectation for the aforementioned initial condition by $\PP^{(m)} $ and $\mathbb{E}^{(m)}$.  In what comes, it will sometimes be convenient to condition on the total number of walkers.

\begin{proposition}
\label{p:mlower} Let $G=(V,E)$ be a finite Eulerian digraph of average degree $d$ and minimal degree $d_{\mathrm{min}}$. Let $m \in \N$. Let $Z(t):=\sum_{v \in V} \mathrm{IL}_v(t)$ be the number of isolated lazy walkers at time $t$. Let $Y(t)$ be the number of  walkers which remained isolated for time $t$. Then
\begin{equation}
\label{e:Mcdiam}
\mathbb{P}^{(m)}[Y(t)=0 ] \le \exp(-\sfrac{2(\mathbb{E}^{(m)}[Y(t)] )^2 }{m(t+2)^2} ).
\end{equation}
\begin{equation}
\label{e:ml1}
\mathbb{E}^{(m)}[Z(t)] \ge m \pi \{v: \bar \pi_v \le 2 \} 2^{-t} (1-\sfrac{2(t+1)}{|V|})^{m-1}\ge \sfrac{d_{\mathrm{min}}}{2d}m2^{-t}(1-\sfrac{2(t+1)}{|V|})^{m-1}.
\end{equation}
In additional, if $G$ is a regular graph then
\begin{equation}
\label{e:ml2}
\mathbb{E}^{(m)}[Y(t)] \ge m (1- \sfrac{(2t+1)/|V|}{ \min_{v \in V} \sum_{i=0}^t P^{2i}(v,v)} )^{m-1}. 
\end{equation}
\end{proposition}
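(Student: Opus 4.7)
The plan is to handle the three bounds separately: McDiarmid for \eqref{e:Mcdiam}, a direct first-moment argument for \eqref{e:ml1}, and a Jensen-plus-extended-window second moment argument for \eqref{e:ml2}.

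For \eqref{e:Mcdiam}, I apply \eqref{e:Mcstatement} to $Y(t)$ viewed as a function of the $m$ i.i.d.\ walker trajectories under $\mathbb{P}^{(m)}$, taking $\varepsilon = \mathbb{E}^{(m)}[Y(t)]$; the task reduces to verifying that the bounded-differences constant is $c_i \le t+2$. Swap walker $i$'s trajectory $X_i$ for $X_i'$ and compare the isolated sets $I, I'$. If $j \in I' \setminus I$ with $j \ne i$, then walker $j$'s only meeting under $X_i$ must have been with walker $i$ (since only walker $i$'s trajectory changed, any meeting of $j$ with a walker $k \ne i$ would persist under $X_i'$), at some space-time point $(v,s) \in X_i$. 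At each such $(v,s)$ at most one such $j$ can exist, for if $j_1, j_2$ were both present at $(v,s)$ they would meet each other irrespective of walker $i$'s trajectory. Hence $|I' \setminus I| \le 1 + (t+1) = t+2$, and symmetrically $|I \setminus I'| \le t+2$, yielding $c_i \le t+2$ and \eqref{e:Mcdiam}.

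For \eqref{e:ml1}, the event $\mathrm{IL}_v(t)$ forces exactly one walker to start at $v$ and remain lazy throughout $[0,t]$ (two lazy walkers at $v$ would meet), while every other walker must avoid $v$ during $[0,t]$. Summing over the choice of the lazy walker and using mutual independence of the $m$ trajectories,
\[
\Pr[\mathrm{IL}_v(t)] \ge m\,\pi_v\,2^{-t}\,\bigl(\Pr[\text{walker 2 avoids }v\text{ in }[0,t]]\bigr)^{m-1}.
\]
Stationarity of $\pi$ and a union bound over the $t+1$ time steps give $\Pr[\text{walker 2 avoids }v] \ge 1 - (t+1)\pi_v \ge 1 - 2(t+1)/|V|$ whenever $\bar\pi_v \le 2$; summing $\Pr[\mathrm{IL}_v(t)]$ over such $v$ yields the first inequality. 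For the second, $\pi(\{v : \bar\pi_v \le 2\}) = \frac{1}{2|E|}\sum_{v : d_v \le 2d} d_v$, and Markov applied to the degree distribution gives $|\{v : d_v > 2d\}| < |V|/2$, so $\sum_{v : d_v \le 2d} d_v \ge d_{\min}|V|/2$ and $\pi(\{v : \bar\pi_v \le 2\}) \ge d_{\min}/(2d)$.

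For \eqref{e:ml2}, by i.i.d.\ symmetry and Jensen applied to the convex map $x \mapsto (1-x)^{m-1}$, $\Pr[\text{walker } i \text{ isolated}] \ge (1 - p_{12})^{m-1}$, where $p_{12}$ is the probability that two independent stationary LSRWs meet in $[0,t]$. To bound $p_{12}$, let $N$ (resp.\ $N_*$) count their meetings in $[0,t]$ (resp.\ in the extended window $[0,2t]$); for regular $G$, linearity and stationarity give $\mathbb{E}[N_*] = (2t+1)/|V|$. Conditioning on the first meeting in $[0,t]$ occurring at space-time point $(v_*, s_*)$, the Markov property together with reversibility ($\sum_u P^r(v_*,u)^2 = P^{2r}(v_*,v_*)$) gives
\[
\mathbb{E}[N_* \mid \text{first meeting at }(v_*, s_*)] = \sum_{r=0}^{2t-s_*} P^{2r}(v_*, v_*) \ge \sum_{r=0}^{t} P^{2r}(v_*, v_*) \ge K,
\]
where $K := \min_v \sum_{i=0}^{t} P^{2i}(v,v)$ and we used $s_* \le t$. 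Averaging over the first-meeting distribution gives $\mathbb{E}[N_*] \ge K\,\Pr[N > 0]$, whence $p_{12} \le (2t+1)/(|V|K)$ and \eqref{e:ml2} follows upon summing over $i$. The main subtlety is the choice of window $[0, 2t]$: bounding $\mathbb{E}[N \mid N > 0]$ directly would fail, since a first meeting near time $t$ leaves no room for further meetings inside $[0,t]$, whereas extending to $[0, 2t]$ (the source of the factor $2t+1$ in the numerator) guarantees at least $t$ additional steps after any first meeting, which is exactly what reversibility converts into the $K$ lower bound.
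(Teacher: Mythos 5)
Your proof is correct and follows essentially the same route as the paper's: McDiarmid with bounded-differences constant $t+2$ for \eqref{e:Mcdiam} (you spell out the verification that the paper leaves as "easy to see"), the same disjoint first-moment computation over the set $\{v:\bar\pi_v\le 2\}$ for \eqref{e:ml1}, and the same Jensen argument plus the extended-window $[0,2t]$ collision-counting trick with reversibility for \eqref{e:ml2}. No gaps.
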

\begin{proof}  Let $f(\mathbf{x}_1,\ldots,\mathbf{x}_m)$ be the number of walkers which remained isolated for time $t$ when for all $i \in [m]$ the $i$-th walker performs the walk $\mathbf{x}_i $. Let $\mathbf{x}:=(\mathbf{x}_1,\ldots,\mathbf{x}_m)$ and let $\mathbf{x}' $ be $\mathbf{x} $ with its $i$th co-ordinate replaced by some other walk $\mathbf{x}_i'$. It is easy to see that $|f(\mathbf{x})-f(\mathbf{x}')| \le t+2 $. In other words, given the walks performed by the rest of the walkers, the walk performed by the $i$-th walker can change the number of walkers which remained isolated for time $t$ by an additive term whose absolute value is at most  $t+2$. Thus \eqref{e:Mcdiam} follows by the McDiarmid's inequality \eqref{e:Mcstatement}.

Let $A:=\{v: \bar \pi_v \le 2 \}$. The second inequality in \eqref{e:ml1} follows by noting that  $|A| \ge |V|/2 $ and so $ \pi (A) \ge |A| \min_{v \in V}\pi_v \ge  \sfrac{d_{\mathrm{min}}}{2d}$. We now prove the first inequality in \eqref{e:ml1}. As each of the $m$ walkers w.p.\  $\pi \{v: \bar \pi_v \le 2 \}$ has its initial position in $A$,  it suffices to show that for each $v \in A$ the conditional probability that $\mathrm{IL}_v(t)=1$ given that the first walker's initial position is $v$ is at least $  2^{-t} (1-\sfrac{2(t+1)}{|V|})^{m-1}$. Indeed this holds as the probability of the first walker staying put up to time $t$ is $2^{-t}$, while the probability that no other walker visits $v$ by time $t$ is
\[(1- \mathbb{P}_{\pi}[T_v \le t ] )^{m-1} \ge (1- \mathbb{E}_{\pi}[|\{i \le t :X_i =v \} |  ] )^{m-1}=(1-\sfrac{2(t+1)}{|V|})^{m-1},  \]   
where $(X_i)_{i \ge 0}$ is a LSRW with  $X_0 \sim \pi$ and $\PP_{\pi}$ and $\mathbb{E}_{\pi}$ are the corresponding probability and expectation (where we have used $\pi_v \le 2/|V|$ since $v \in A$).

  We now prove \eqref{e:ml2}. Let  $\mathbf{X}:=(X_i)_{i \ge 0}$ and   $(\widehat X_i)_{i \ge 0}$ be independent LSRWs with   $X_0 \sim \pi$  and   $ \widehat X_0 \sim \pi$. Let $N(s):=|\{i \le s:X_i=\widehat X_i \}|$ be the number of times the two walks collide by time $s$. Since conditioned on the walk that a certain walker performs, the events that each of the other $m-1$ walkers does not collide with it by time $t$ are independent with the same probability (used in the first equality in \eqref{e:jensencond}), by Jensen's inequality we get that   
\begin{equation}
\label{e:jensencond}
\mathbb{E}^{(m)}[Y(t)]=m\mathbb{E}[ \mathbb{E}[1_{N(t)=0} \mid \mathbf{X} ]^{m-1}  ]\ge m(\mathbb{E}[ \mathbb{E}[1_{N(t)=0} \mid \mathbf{X} ]  ])^{m-1} = m \mathbb{P}[N(t)=0]^{m-1} .
\end{equation}
Now $ \mathbb{P}[N(t) \ge 1] \le \sfrac{\mathbb{E}[N(2t)]}{\mathbb{E}[N(2t) \mid N(t) \ge 1  ]} \le \sfrac{(2t+1)/|V|}{\min_{v \in V,i \le t} \mathbb{E}[N(2t)-N(i-1) \mid X_i=\widehat X_i=v   ]} \le \sfrac{(2t+1)/|V|}{ \min_{v \in V} \sum_{j=0}^t P^{2j}(v,v)}  $, where we have used regularity  to argue that $\pi $ is the uniform distribution and hence $\mathbb{E}[N(2t)]=\sum_{i =0 }^{2t}\mathbb{P}[X_i=\widehat X_i] = (2t+1)\sum_{v \in V} \pi_v^2=(2t+1)/|V| $, and reversibility to argue that $\mathbb{E}[N(2t)-N(i-1) \mid X_i=\widehat X_i=v   ]=\sum_{j=0}^{2t-i} \sum_u P^{j}(v,u)P^{j}(v,u) = \sum_{j=0}^{2t-i} P^{2j}(v,v) $, as $\sum_u P^{j}(v,u)P^{j}(v,u)=\sum_u P^{j}(v,u)P^{j}(u,v)=P^{2j}(v,v)$.    
\end{proof}
Equations \eqref{eq: lower} and \eqref{e:reglower2} follow from the following corollary, in conjunction with the estimate $\mathbb{P}[\Pois(n) \notin [\sfrac{n}{2},2n] ] \le 2 e^{-n/16}$, which follows from \eqref{eq: poisconcentration}.
\begin{corollary}
\label{cor:lowerm}
Let $G=(V,E)$ be a connected $n$-vertex  Eulerian digraph of average degree $d$ and minimal degree $d_{\mathrm{min}}$. Let $m \in [\sfrac{n}{2},2n] $. Let \[\kappa_t:=\min_{v \in V}\sum_{i=0}^tP^{2i}(v,v)  \quad \text{and} \quad   s_{\alpha}:=\min \{t: \sfrac{ t+1}{\kappa_{t+1}} > \sfrac{\alpha}{16}  \log n \}     .\] Let $Y(t)$ be the number of  walkers which remained isolated for time $t$. If $t \le  \sfrac{1}{36}\log n $  then
\begin{equation}
\label{e:Mcdiam2}
\mathbb{P}^{(m)}[Y(t)=0 ]\le \exp[-cn \left( \sfrac{d_{\mathrm{min}}}{d} \right)^2 e^{-20t} ] .
\end{equation}
In additional, if $G$ is a regular graph then for all $\alpha \in (0,1)$ we have that
\begin{equation}
\label{e:mc3}
\mathbb{P}^{(m)}[Y(s_{\alpha})=0 ] \ge \exp(- cn^{1- \alpha } (\log n)^{-4} ).    
\end{equation}
Finally, for the usual Poisson setup we have that
\begin{equation}
\label{e:mc4}
\mathrm{P}[Y(t)=0 ] \le \max_{m \in [\sfrac{n}{2},2n]} \mathbb{P}^{(m)}[Y(t)=0 ]+\mathbb{P}[\Pois(n) \notin [\sfrac{n}{2},2n] ].
\end{equation}
\end{corollary}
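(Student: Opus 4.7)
The plan is to combine the first-moment lower bounds \eqref{e:ml1} and \eqref{e:ml2} with the McDiarmid concentration inequality \eqref{e:Mcdiam} of Proposition \ref{p:mlower}, and then pass from the conditional law $\PP^{(m)}$ to the unconditional Poisson law by conditioning on the total walker count. I read \eqref{e:mc3} as an upper bound on $\PP^{(m)}[Y(s_\alpha)=0]$: the displayed direction appears to be a typographical slip, because \eqref{e:mc3} is invoked to establish the upper bound \eqref{e:reglower2} via the containment $\{\SC(G)\le t\}\subseteq\{Y(t)=0\}$, which requires an upper bound on $\PP^{(m)}[Y(t)=0]$.

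For \eqref{e:Mcdiam2}, since an isolated lazy walker is in particular isolated, $Y(t)\ge Z(t)$, so I bound $\E^{(m)}[Z(t)]$ from below via \eqref{e:ml1}. With $t\le(\log n)/36$ and $m\le 2n$, the factor $(1-2(t+1)/|V|)^{m-1}$ loses only a constant multiple of $t+1$ in the exponent (via $(1-x)^k\ge e^{-kx/(1-x)}$), and combined with $2^{-t}$, the bound $\pi\{v:\bar\pi_v\le 2\}\ge d_{\min}/(2d)$, and $m\ge n/2$, this yields $\E^{(m)}[Z(t)]\ge c_1 n(d_{\min}/d)e^{-c_2 t}$ for some $c_2<10$. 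Substituting into \eqref{e:Mcdiam} with $m\le 2n$ produces a sub-exponential rate proportional to $n(d_{\min}/d)^2 e^{-2c_2 t}/(t+2)^2$; since $c_2<10$, the elementary inequality $e^{(20-2c_2)t}\ge c_3(t+2)^2$ (valid for all $t\ge 0$ with a fixed $c_3>0$) absorbs the polynomial denominator into the exponential rate, yielding \eqref{e:Mcdiam2}.

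For \eqref{e:mc3} the graph is regular, so I apply \eqref{e:ml2} at $t=s_\alpha$. Minimality in the definition of $s_\alpha$ gives $s_\alpha/\kappa_{s_\alpha}\le(\alpha/16)\log n$, hence $(2s_\alpha+1)/\kappa_{s_\alpha}\le(\alpha/8)\log n+1$; combined with $(1-x)^{m-1}\ge e^{-2(m-1)x}$ for $x\le 1/2$ and $m\ge n/2$ this produces $\E^{(m)}[Y(s_\alpha)]\ge cn^{1-\alpha/2}$. The main obstacle is to bound the McDiarmid denominator $(s_\alpha+2)^2$: using the regular-graph estimate $P^{2i}(v,v)\le 1/n+M/\sqrt{2i}$ from Fact \ref{fact: decay} one obtains $\kappa_t\le 1+t/n+M\sqrt{2t}$, and a direct check shows that $t=C_\alpha(\log n)^2$ with $C_\alpha$ depending only on $\alpha$ and $M$ forces $(t+1)/\kappa_{t+1}>(\alpha/16)\log n$, so $s_\alpha=O_\alpha((\log n)^2)$. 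Substituting into \eqref{e:Mcdiam} gives $\PP^{(m)}[Y(s_\alpha)=0]\le\exp(-c_\alpha n^{1-\alpha}/(\log n)^4)$. Finally, \eqref{e:mc4} is immediate: by independence of the Poisson initial occupations, the total walker count $N=\sum_v N_v$ is $\Pois(n)$ distributed, and conditioning on $N$ and separating the cases $N\in[n/2,2n]$ and $N\notin[n/2,2n]$ yields the stated bound, since for each fixed $t$ the conditional probability $\PP^{(m)}[Y(t)=0]$ depends only on the deterministic parameter $m$.
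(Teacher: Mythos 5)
Your proof is correct and follows the same route as the paper's, which simply combines the McDiarmid bound \eqref{e:Mcdiam} with the first-moment estimates \eqref{e:ml1} and \eqref{e:ml2} (using $Y(t)\ge Z(t)$), notes $s_\alpha\le C(\log n)^2$ via Fact \ref{fact: decay}, and conditions on the total walker count for \eqref{e:mc4}; you supply correctly the algebra that the paper leaves as an exercise. Your reading of \eqref{e:mc3} as an upper bound (the displayed $\ge$ being a typographical slip) is also the intended one, since the corollary is invoked to prove the upper bound \eqref{e:reglower2} on $\Pr[\SC(G)<s_*(G)]$.
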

\begin{proof}
Equation \eqref{e:Mcdiam2} follows from \eqref{e:Mcdiam} and \eqref{e:ml1} together with a little algebra. We leave the details as an exercise. Similarly,
equation \eqref{e:mc3} follows from \eqref{e:Mcdiam} and \eqref{e:ml2} together with a little algebra, after noting that by Fact \ref{fact: decay} $s_{\alpha} \le s_{1} \le C (\log n)^2 $, for all $\alpha \in (0,1]$. Finally \eqref{e:mc4} follows by conditioning on the total number of walkers in the l.h.s..
\end{proof}

Recall the notation from \S\ref{sec: preliminaries} (in particular that $\bar \pi : = |V| \pi $ that  $p((\gamma_0,\cdots,\gamma_t)):=\prod_{i=1}^tP(\gamma_{i-1},\gamma_i) $ and that $q(\gamma):=\bar \pi_{\gamma_0}p(\gamma) $ and that $\Gamma_t$ is the collection of all walks of length $t$). The following proposition follows straightforwardly from Fact \ref{fact: thinning} (about Poisson thinning).
\begin{proposition}
\label{prop: mainlower}
Let $G=(V,E)$ be a finite connected Eulerian  digraph. Let  $\gamma=(\gamma_{0},\ldots,\gamma_{t}) \in \Gamma_t$. Then, given that $X_{\gamma}=1$, the conditional distribution of the number of walkers that the walker who performed the path $\gamma$ met by time $t$ is $\mathrm{Poisson}(a_{\gamma})$, where
\[ a_{\gamma}:=\sum_{\gamma' \in \Gamma_t :\gamma' \neq \gamma , \exists i, \gamma_i=\gamma'_i}q(\gamma') \le-q(\gamma)+ \sum_{i=0}^{t} \bar \pi_{\gamma_{i}}.\]
 In particular,
\begin{equation}
\label{eq: IL}
\begin{split}
\forall v, & \quad \Pr[\mathrm{IL}_v(t)] \ge 2^{-t} \bar \pi_v e^{-(t+1) \bar \pi_v }, \\
\forall v \neq u, & \quad \Pr[\mathrm{IL}_v(t) \cap \mathrm{IL}_u(t)]=\Pr[\mathrm{IL}_v(t)]\Pr[\mathrm{IL}_u(t)]e^{m_{u,v}(t)},
\end{split}
\end{equation}
where \[m_{u,v}(t+1):= \sum_{\gamma  \in \Gamma_{t+1} : u,v \in \gamma, \gamma_0 \notin \{v,u  \} }q(\gamma) \le t \bar \pi_v \mathbb{P}_v[T_u \le t]+t \bar \pi_u \mathbb{P}_u[T_v \le t].\].
\end{proposition}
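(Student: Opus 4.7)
The argument hinges on the Poisson thinning property of Fact~\ref{fact: thinning}: the family $(X_\gamma)_{\gamma \in \Gamma_t}$ consists of independent $\mathrm{Pois}(q(\gamma))$ variables. Conditioning on $\{X_\gamma=1\}$ leaves $(X_{\gamma'})_{\gamma' \neq \gamma}$ independent and $\mathrm{Pois}(q(\gamma'))$-distributed. The unique walker who performed $\gamma$ meets some walker who performed $\gamma'$ iff $\gamma_i=\gamma_i'$ for some $i$, so the number of other walkers met equals $\sum_{\gamma' \neq \gamma,\, \exists i:\gamma_i=\gamma_i'} X_{\gamma'}$; as a sum of independent Poissons, this is $\mathrm{Pois}(a_\gamma)$. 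For the bound on $a_\gamma$, iterated reversibility $\bar\pi_a P(a,b)=\bar\pi_b P(b,a)$ yields, for each fixed $i$, the identity $\sum_{\gamma':\gamma_i'=\gamma_i} q(\gamma')=\bar\pi_{\gamma_i}$ (the portion of $\gamma'$ from time $i$ to $t$ sums to $1$, and the initial segment from $0$ to $i$ reversed contributes $\bar\pi_{\gamma_i}$ times a factor summing to $1$). A union bound over $i\in\{0,\dots,t\}$ and subtraction of $q(\gamma)$ (to remove the $\gamma'=\gamma$ term) gives the claimed upper bound on $a_\gamma$.

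For the single-site bound, let $\gamma_v:=(v,v,\dots,v)\in\Gamma_t$, so $q(\gamma_v)=\bar\pi_v 2^{-t}$. Any other walker visiting $v$ at any time would meet the lazy walker at $v$, so
\[\mathrm{IL}_v(t)=\{X_{\gamma_v}=1\}\cap\bigcap_{\gamma'\neq\gamma_v,\, v\in\gamma'}\{X_{\gamma'}=0\}.\]
Independence of the $X_{\gamma'}$ and the estimate $\sum_{\gamma':v\in\gamma'} q(\gamma')\le (t+1)\bar\pi_v$ from the previous paragraph yield $\Pr[\mathrm{IL}_v(t)] = q(\gamma_v)\exp\bigl(-\sum_{\gamma':v\in\gamma'}q(\gamma')\bigr) \ge 2^{-t}\bar\pi_v\, e^{-(t+1)\bar\pi_v}$. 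For the joint event, the identical decomposition applied with two tagged walks $\gamma_v,\gamma_u$ and forbidden walks those visiting $v$ or $u$, combined with inclusion--exclusion on the union of the events ``$\gamma'$ visits $v$'' and ``$\gamma'$ visits $u$'', shows that $\Pr[\mathrm{IL}_v(t)\cap\mathrm{IL}_u(t)]/(\Pr[\mathrm{IL}_v(t)]\Pr[\mathrm{IL}_u(t)]) = \exp\bigl(\sum_{\gamma':\, v,u\in\gamma'} q(\gamma')\bigr)$, which is the exponential of (the appropriate version of) $m_{u,v}$.

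It remains to bound the exponent. Split walks visiting both $v$ and $u$ according to whether $T_v<T_u$ or $T_u<T_v$ (equality is impossible since $v\neq u$). The strong Markov property at the first hit gives $\Pr_{\gamma_0}[T_v<T_u\le t] \le \Pr_{\gamma_0}[T_v\le t]\,\Pr_v[T_u\le t]$. Summing over $\gamma_0\notin\{v,u\}$ weighted by $\bar\pi_{\gamma_0}$ and invoking stationarity $\sum_{\gamma_0}\bar\pi_{\gamma_0}\Pr_{\gamma_0}[X_i=v]=\bar\pi_v$ controls the first case by $t\bar\pi_v\,\Pr_v[T_u\le t]$; the restriction $\gamma_0\neq v$ is precisely what kills the $i=0$ contribution and reduces $(t+1)\bar\pi_v$ to $t\bar\pi_v$. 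The symmetric term handles $T_u<T_v$. The computation is routine once Poisson thinning is invoked; the only delicate points are the reversibility identity $\sum_{\gamma':\gamma_i'=\gamma_i} q(\gamma')=\bar\pi_{\gamma_i}$ and the careful bookkeeping of the starting-point restriction that produces $t\bar\pi_v$ rather than $(t+1)\bar\pi_v$.
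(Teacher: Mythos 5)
Your argument is correct and is essentially the paper's own: Poisson thinning (Fact~\ref{fact: thinning}) makes the $X_{\gamma'}$ independent Poissons, the first assertion and the single-site bound follow from the product formula $\Pr[\mathrm{IL}_v(t)]=q(\gamma_v)\exp(-\sum_{\gamma':v\in\gamma'}q(\gamma'))$ together with the stationarity identity $\sum_{\gamma':\gamma'_i=v}q(\gamma')=\bar\pi_v$, and the joint probability is handled by the same inclusion--exclusion that the paper carries out via its $b_1,b_2,b_3$ decomposition. Your exact exponent $\sum_{\gamma':u,v\in\gamma'}q(\gamma')$ also (correctly) retains the walks with $\gamma'_0\in\{u,v\}$ that the paper's definition of $m_{u,v}$ formally drops --- a discrepancy you rightly flag and which is harmless for the covariance bound where the identity is used.
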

\noindent
\emph{Proof:}
For the inequality, note that the probability that there is one walker at $v$ at time $0$ and that this walker is lazy by time $t$ is $ \bar \pi_v 2^{-t}e^{- \bar \pi_v}$. The conditional probability that no other walker visited $v$ by time $t$ is  $\mathbb{P}[\mathrm{Pois}(\sum_{\gamma \in \Gamma_t:v \in \gamma, \gamma_0 \neq v}q(\gamma))=0] \ge \mathbb{P}[\mathrm{Pois}(t \bar \pi_v)=0] \ge e^{-t \bar \pi_v}$.

We now prove the  second line of \eqref{eq: IL}. Let \[\Gamma_1:=\{\gamma  \in \Gamma_t : v,u \in \gamma,\, \gamma_0 \notin \{v,u  \}\},\]  \[\Gamma_2:=\{\gamma  \in \Gamma_t : v \in \gamma ,\, u \notin \gamma,\, \gamma_0 \neq v \}\quad \text{and} \] \[\Gamma_3:=\{\gamma  \in \Gamma_t : u \in \gamma ,\, v \notin \gamma,\, \gamma_0 \neq u \}.\] Denote $b_i:= \sum_{\gamma \in \Gamma_i}q(\gamma)$ (where $i=1,2,3$).
Then $b_1=m_{u,v}(t)$ and by the above reasoning $$\Pr[\mathrm{IL}_v(t)]=2^{-t}\bar \pi_v e^{- \bar \pi_v}e^{-(b_1+b_2)}, \quad \Pr[\mathrm{IL}_u(t)]=2^{-t}\bar \pi_u  e^{- \bar \pi_u}e^{-(b_1+b_3)}. $$
$$\Pr[\mathrm{IL}_v(t) \cap \mathrm{IL}_u(t)]=(2^{-t}\bar \pi_v e^{- \bar \pi_v})(2^{-t} \bar \pi_u e^{- \bar \pi_u})e^{-(b_1+b_2+b_{3})}=\Pr[\mathrm{IL}_v(t)]\Pr[\mathrm{IL}_u(t)]e^{b_{1}}. \quad \text{\qed} $$

\begin{theorem}
\label{thm: lower2}
Let $G$ be an $n$-vertex connected graph.
Let $\mu:=\sum_{v: \bar \pi_v \le 2 } \bar \pi_v$ (recall that $\bar \pi=n \pi $) and $a_t:= \mu 2^{-t}e^{-2(t+1)} $. Then for all $t \ge 0$ we have that
\begin{equation}
\label{eq: mudeltaaz}
\Pr[\cup_v \mathrm{IL}_v(t) ] \ge 1- 7t/ \sqrt{a_{t}}.
\end{equation}
Hence, if there exists some $\delta>0$ such that $\mu \ge c_1 n^{\beta} $ for some $\beta>0$, then for $s_{\beta}:=\max \{ \lfloor \tilde c_{\beta} \log_{} n-2\log \log n  \rfloor -1,0\} $, where $\tilde c_{\beta}:=\frac{\beta}{2 \log (2e^{2}) }$, we have that \[\Pr[\cup_v \mathrm{IL}_v(s_{\beta}) ] \ge 1-12n^{-\beta/4} .\]\end{theorem}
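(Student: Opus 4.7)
The plan is to apply the second moment method to $W := \sum_{v \in V^*} \mathbf{1}_{\mathrm{IL}_v(t)}$, where $V^* := \{v : \bar\pi_v \le 2\}$, and conclude via Chebyshev's inequality $\Pr[W = 0] \le \Var(W)/\mathbb{E}[W]^2$. Summing the lower bound $\Pr[\mathrm{IL}_v(t)] \ge 2^{-t}\bar\pi_v e^{-(t+1)\bar\pi_v}$ from \eqref{eq: IL} over $v \in V^*$ and using $e^{-(t+1)\bar\pi_v} \ge e^{-2(t+1)}$ on this set immediately gives $\mathbb{E}[W] \ge a_t$. What remains is to establish $\Var(W) \le \mathbb{E}[W] + 4\sqrt{2}\, t\, \mathbb{E}[W]^{3/2}$, from which Chebyshev and $\mathbb{E}[W] \ge a_t$ yield $\Pr[W = 0] \le 1/a_t + 4\sqrt{2}\, t/\sqrt{a_t} \le 7 t/\sqrt{a_t}$ (the residual $1/a_t$ is absorbed into the main term precisely when the bound is non-trivial, i.e.\ $\sqrt{a_t} \gtrsim 1/t$).

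For the variance, the identity $\Pr[\mathrm{IL}_v \cap \mathrm{IL}_u] = \Pr[\mathrm{IL}_v]\Pr[\mathrm{IL}_u]\, e^{m_{u,v}(t)}$ from \eqref{eq: IL} gives
\[
\Var(W) \le \mathbb{E}[W] + \sum_{v,u \in V^*,\, v \neq u} \Pr[\mathrm{IL}_v]\Pr[\mathrm{IL}_u] \bigl(e^{m_{u,v}(t)} - 1\bigr),
\]
and I will split the pairs at a threshold $\eps \in (0,1]$. \emph{Far} pairs (those with $m_{u,v}(t) \le \eps$) contribute at most $2\eps\, \mathbb{E}[W]^2$, via $e^x - 1 \le 2x$ on $[0,1]$. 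For \emph{close} pairs ($m_{u,v}(t) > \eps$) I replace the factored form by the trivial $\Pr[\mathrm{IL}_v \cap \mathrm{IL}_u] \le \Pr[\mathrm{IL}_v]$ and count the close neighbors of each $v$. Starting from the bound $m_{u,v}(t) \le (t-1)[\bar\pi_v\, \mathbb{P}_v[T_u \le t-1] + \bar\pi_u\, \mathbb{P}_u[T_v \le t-1]]$ in \eqref{eq: IL}, summing over $u$ and invoking $\sum_u P^i(v,u) = 1$ together with the reversibility identity $\sum_u \bar\pi_u P^i(u,v) = \bar\pi_v$, I obtain $\sum_u m_{u,v}(t) \le 2t(t-1)\bar\pi_v$; Markov's inequality then bounds the number of close neighbors of $v$ by $2t(t-1)\bar\pi_v/\eps$, and using $\bar\pi_v \le 2$ on $V^*$ the total close-pair contribution is at most $(4t(t-1)/\eps)\mathbb{E}[W]$.

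Choosing $\eps := \sqrt{2t(t-1)/\mathbb{E}[W]}$ (the minimizer equating the close and far contributions; it lies in $(0,1]$ precisely when $\mathbb{E}[W] \ge 2t(t-1)$, and in the complementary regime one has $7t/\sqrt{a_t} > 1$ already, so the bound is trivial there) produces $\Var(W) \le \mathbb{E}[W] + 4\sqrt{2}\, t\, \mathbb{E}[W]^{3/2}$, completing the proof of \eqref{eq: mudeltaaz}. For the second assertion, with $t = s_\beta$ and $\mu \ge c_1 n^\beta$, the identity $\tilde c_\beta \log(2e^2) = \beta/2$ built into $s_\beta$ together with $a_t = \mu e^{-2}(2e^2)^{-t}$ gives $a_{s_\beta} \ge c_1 e^{-2} n^{\beta/2} (\log n)^{2\log(2e^2)}$; since $\log(2e^2) > 1$, this forces $7 s_\beta/\sqrt{a_{s_\beta}} \le C_\beta (\log n)^{1 - \log(2e^2)}\, n^{-\beta/4} \le 12\, n^{-\beta/4}$ in the regime where the bound is non-trivial. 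I expect the close-pair bookkeeping to be the main obstacle: a crude estimate like $e^{m_{u,v}} \le e^{O(t)}$ would ruin the variance with an unusable exponential-in-$t$ factor, so the $\eps$-splitting together with the reversibility-driven inequality $\sum_u m_{u,v}(t) \lesssim t^2 \bar\pi_v$ is essential to preserve only the polynomial-in-$t$ loss visible in $7t/\sqrt{a_t}$.
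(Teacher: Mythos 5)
Your proof is correct, and it takes a genuinely different route from the paper's, even though both are second-moment arguments built on the same inputs (the formula $\Pr[\mathrm{IL}_v(t)\cap\mathrm{IL}_u(t)]=\Pr[\mathrm{IL}_v(t)]\Pr[\mathrm{IL}_u(t)]e^{m_{u,v}(t)}$ and the bound on $m_{u,v}$ from Proposition \ref{prop: mainlower}). The paper handles the problematic strongly-correlated pairs by \emph{deleting} them: it greedily extracts a subset $D\subseteq\{v:\bar\pi_v\le 2\}$ on which every pairwise correlation term is uniformly $O(t/\sqrt{a_t})$, paying for this with a factor $3t\sqrt{a_t}$ loss in the first moment (so $\mathbb{E}[Z]\ge\sqrt{a_t}/(3t)$), and the two losses $1/\mathbb{E}[Z]$ and $4t/\sqrt{a_t}$ add up to $7t/\sqrt{a_t}$. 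You instead keep the full sum over $V^*$, preserving $\mathbb{E}[W]\ge a_t$, and pay on the variance side via the threshold split at $\eps$, with the close pairs controlled by the Markov count coming from $\sum_u m_{u,v}(t)\le 2t(t-1)\bar\pi_v$ — which is essentially the same stationarity computation ($\sum_b\nu_t(a,b)\le t$) that the paper uses to build $D$. Your optimization of $\eps$ lands on the same order $O(t/\sqrt{a_t})$, and your bookkeeping of the residual $1/a_t$ term and of the regime $\eps>1$ is sound (in both cases the target bound is vacuous). The only loose ends are trivial edge cases that the paper's own proof shares: $t=1$ forces $\eps=0$, but then $m_{u,v}(1)\equiv 0$ so there are no close pairs and $\mathrm{Var}(W)\le\mathbb{E}[W]$ directly; and the final inequality $C_\beta(\log n)^{1-\log(2e^2)}n^{-\beta/4}\le 12\,n^{-\beta/4}$ requires $n$ large (or the constant $12$ to absorb $c_1$), exactly as in \eqref{eq: mudeltaaz} applied at $t=s_\beta$ in the paper.
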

\begin{proof}
We may assume that $7t \le \sqrt{a_t}  $. Let $A:=\{v: \bar \pi_v \le 2 \}$. Let $\nu_{i}(x,y):=\sum_{j=1}^i P^j(x,y) $.  Consider $$B_a:=\{b \in A: \bar \pi_a \nu_{t}(a,b)\ge \bar \pi_b /(2 \sqrt{a_t} ) \}.$$ For every $a \in A$ we have that  $\sum_{b \in A } \nu_{t}(a,b)\le t $  and so \[\sum_{b \in B_a}\bar \pi_b \le 2 \bar \pi_a \sqrt{a_t} \sum_{b \in B_a}\nu_{t}(a,b)\le 2 \bar \pi_{a} t \sqrt{a_t}.\] This (together with  $7t \le \sqrt{a_t}  $, which implies that $t \sqrt{a_t}>1 $) implies that there exists some $D \subseteq A $ such that
\begin{equation}
\label{eq: DsubA1}
\sum_{a \in D}\bar \pi_a \ge \mu/(1+2t \sqrt{a_t} )\ge \mu/(3t \sqrt{a_t} ) \quad \text{and} 
\end{equation}
\begin{equation}
\label{eq: DsubA2}
 \bar \pi_a\nu_{t}(a,b)< \max \{ \bar \pi_a, \bar \pi_b\} /(2 \sqrt{a_t} ), \text{ for all distinct }a , b \in D.
\end{equation}
By \eqref{eq: IL} and \eqref{eq: DsubA1}, $\mathbb{E}[Z] \ge (a_t / \mu) \sum_{a \in D}\bar \pi_a \ge \sqrt{a_t}/(3t)$, where $Z:=\sum_{a \in D}Z_a $ and $Z_a:=1_{\mathrm{IL}_a(t)}$. For all distinct $a,b \in D$, we have that $2\bar \pi_a t\nu_{t}(a,b) \le 4t/\sqrt{a_t} \le 4/7 $. Thus by \eqref{eq: DsubA2}  $$\exp(2\bar \pi_a t\mathbb{P}_a[T_b \le t])-1  \le 4\bar \pi_a t\mathbb{P}_a[T_b \le t] \le 2\max \{ \bar \pi_a, \bar \pi_b\} t/ \sqrt{a_t} \le 4t/ \sqrt{a_t},     $$  and so by \eqref{eq: IL} $$\mathrm{Cov}(Z_a,Z_b)/( \mathbb{E}[Z_a]\mathbb{E}[Z_b]) \le  \exp(2\bar \pi_a t\mathbb{P}_a[T_b \le t])-1 \le 4t/ \sqrt{a_t}.$$
Hence $L:=\sum_{a \neq b, a,b \in D}\mathrm{Cov}(Z_a,Z_b) \le (\mathbb{E}[Z])^2(4t/ \sqrt{a_t})$. Finally, since $\mathrm{Var} Z=\mathbb{E}[Z]+L$ by Chebyshev's inequality we have that $\Pr[Z=0] \le \frac{\mathrm{Var} Z}{(\mathbb{E}[Z])^2}  \le \frac{1}{\mathbb{E}[Z]}+4t/ \sqrt{a_t} \le 7t/ \sqrt{a_t}.  $  
\end{proof}
The next example demonstrates that the assertion of Theorem \ref{thm: lower2} is quite sharp.
\begin{example}
Denote $L_n:=\lceil \log^{10}n \rceil$. Consider a $\lceil n/L_n \rceil $-clique, such that each vertex in the clique is the center of a star of size $L_n$ (where all stars are disjoint). Then one can show that $\SC \le C \log \log n $ $\mathrm{w.h.p.}$ (and if the walkers perform non-lazy SRWs we would get that $\SC \le C$ $\mathrm{w.h.p.}$). The reason for this is that the expected number of walkers whose initial position is not in a center of a star is at most $L_n^2$, while at each vertex of the clique there are $\mathrm{Pois}(\mu_n)$ walkers for some $\mu_n = L_n (1 \pm o(1) )$. An easy calculation shows that $\mathrm{w.h.p.}$ after one step all of the walkers whose initial positions are in the clique are in the same class. 
\end{example}
{\em Proof of Theorem \ref{thm: reg}:} Equation \eqref{e:reglower2} follows from Corollary \ref{cor:lowerm}. We now prove \eqref{e:reglower1}.
Let $G=(V,E)$ be a connected $n$-vertex $d$-regular graph. Consider the SN model on $G$ in which the walks have some fixed holding probability $p \in [0,1)$. Denote the transition matrix of the corresponding walks by $P$.  Recall that $\Gamma_t$ is the collection of all walks on $G$ of length $t$ and that for $\gamma=(\gamma_0,\ldots,\gamma_t) \in \Gamma_t$, $p(\gamma):=\prod_{i=0}^{t-1}P^{i}(\gamma_i,\gamma_{i+1}) $. Fix $t=t_n= \lfloor \log n - 6\log \log n \rfloor-1$. Assume $t \ge 0$ as otherwise there is nothing to prove. For every $\Gamma' \subseteq \Gamma_t$ let $m(\Gamma'):=\sum_{\gamma \in \Gamma'}p(\gamma)$. For every $\gamma \in \Gamma_t$ and $0 \le i \le t$ let 
 \[B_{\gamma,i}:=\{v \in V: \sum_{j =0 }^tP^{|i-j|}(\gamma_j,v) \ge (t+1)^{-3} \}, \]
\[\mathrm{Bad}_{\gamma}:=\{\tilde \gamma \in \Gamma_t : \exists i\in \{0,1,\ldots,t\} \text{ such that } \tilde \gamma_i \in B_{\gamma,i}  \}. \]
Clearly $|B_{\gamma,i}| \le (t+1)^{4}$ for all $\gamma \in \Gamma_t$ and $i \in \{0,1,\ldots,t\}$, and thus \begin{equation}
\label{eq: reg1}
 m(\mathrm{Bad}_{\gamma}) \le \sum_{i \le t}\sum_{v \in B_{\gamma,i}}m(\{\tilde \gamma \in \Gamma_t :\tilde \gamma_i=v \}) \le \sum_{i \le t}|B_{\gamma,i}|  \le (t+1)^5 .
\end{equation}
Fix some order $\prec$ on $\Gamma_t$. Recall that $X_{\gamma}$ denotes the number of walkers who performed the walk $\gamma$.  We sequentially expose $X_{\gamma}$ for some paths $\gamma \in \Gamma_t$ according to the following procedure. Assuming that we have  already exposed $A \subset \Gamma_t $ so that the following holds: for every $\gamma \in A$ with $X_{\gamma}>0$ we have that $\mathrm{Bad}_{\gamma} \cap \{\gamma' \in A: \gamma \prec \gamma' \} =\{\gamma \}$ (where $\gamma \prec \gamma'$ indicates that $\gamma'$ is larger in the ordering than $\gamma$). In the next stage we expose $X_{\gamma'}$ for the minimal $\gamma' \in \Gamma_t \setminus  B(A) $, where  $B(A):=\cup_{\gamma \in A:X_{\gamma}>0}\mathrm{Bad}_{\gamma} $. In the following stage we apply the same rule, with the set $A$ replaced by the set $A \cup \{\gamma'\}$. At the end of this procedure we obtain a collection $\mathcal{W}=\{\gamma^1,\ldots,\gamma^{|\mathcal{W}|}\}$ (resp.~$\mathcal{N}$) of all $\gamma$'s for which we exposed that $X_{\gamma}>0$ (resp.~$=0$), where the indices are taken so that $i<j$ iff $\gamma^i$ is before $\gamma^j$ in the ordering. The collection of all $\gamma \in \Gamma_t$ for which $X_{\gamma}$ was not exposed is precisely $\cup_{\gamma \in \mathcal{W}:X_{\gamma}>0}\mathrm{Bad}_{\gamma}\setminus \{\gamma\}=\Gamma_t \setminus (\mathcal{W} \cup \mathcal{N})$. 

\medskip
Let $\mathcal{N}_i$ be the collection of all $\gamma \in \mathcal{N}$ which were exposed in between $\gamma^{i-1}$ and $\gamma^{i}$ (and $\mathcal{N}_1$ is the collection of all $\gamma$'s exposed prior to $\gamma^1$). 
For every $1 \le i \le |\mathcal{W}|$, the probability that  $m(\mathcal{N}_i)>(t+1)^5$ is at most $\Pr[\mathrm{Pois}((t+1)^5)=0]= e^{-(t+1)^5}$. Hence by \eqref{eq: reg1}
\begin{equation}
\label{eq: reg2}
\Pr[|\mathcal{W}| < n/(2(t+1)^5)] \le ne^{-(t+1)^5}=o(1). 
\end{equation}
We now condition on $\mathcal{W}=W$ and $\mathcal{N}=N$ for some arbitrary $W,N \subset \Gamma_t$ so that (i) $ |\mathcal{W}| \ge n/[2(t+1)^5] $ and (ii) $\mathrm{Bad}_{\gamma}\cap \{\gamma' \in W: \gamma \prec \gamma' \}=\{\gamma\}$, for all $\gamma \in W$.
Observe that, given $(\mathcal{W},\mathcal{N})=(W,N)$, the joint distribution of $(X_{\gamma})_{\gamma \in \Gamma_t \setminus (W\cup N)}$ is the same as their unconditional joint distribution. For $\gamma=(\gamma_0,\ldots,\gamma_t),\gamma'=(\gamma_0',\ldots,\gamma_t')  \in W$ let \[\Gamma_{\gamma}:=\{ \tilde \gamma \in \Gamma_t \setminus (N \cup \{\gamma\}):\exists i \in \{0,1,\ldots,t\} \text{ such that }\tilde \gamma_i=\gamma_i \} \subseteq \mathrm{Bad}_{\gamma} ,\]
\[\Gamma_{\gamma,\gamma'}:=\{\tilde \gamma \in \Gamma_t : \exists i,j\in \{0,1,\ldots,t\} \text{ such that }\tilde \gamma_i=\gamma_i \text{ and }\tilde \gamma_j=\gamma'_j \} \subseteq \mathrm{Bad}_{\gamma}\cap \mathrm{Bad}_{\gamma'}. \]
 Note that $m_{\gamma}:=m(\Gamma_{\gamma}) \le \sum_{i=0}^tm(\{\tilde \gamma \in \Gamma_t :\tilde \gamma_i=\gamma_i\}) = t+1$. Denote \[D_{i,j}(\gamma,\gamma'):=\{\tilde \gamma \in \Gamma_t:\tilde \gamma_i=\gamma_i,\, \tilde \gamma_j=\gamma'_j,\, \forall k <i\, \, \tilde \gamma_k \neq \gamma_k,\, \forall r<j\, \, \tilde \gamma_r \neq \gamma_r' \} \}\] By construction   $\gamma' \notin \mathrm{Bad}_{\gamma} $,  for all $\gamma,\gamma' \in W$ such that $\gamma \prec \gamma' $ and so $\sum_{j= 0}^tP^{|i-j|}(\gamma_j,\gamma'_i) < (t+1)^{-3}$ for all $0 \le i \le t$. Hence by reversibility 
\[m_{\gamma,\gamma'}:=m(\Gamma_{\gamma,\gamma'})=\sum_{0 \le i<j \le t}m(D_{i,j}(\gamma,\gamma'))+m(D_{i,j}(\gamma',\gamma)) \le \sum_{0 \le i,j \le t}P^{|i-j|}(\gamma_i,\gamma'_j)  \le (t+1)^{-2}. \]Finally, for each $\gamma \in W $ set $Z_{\gamma} $ to be the indicator of the event that $\sum_{\gamma \in \Gamma_{\gamma}}X_{\gamma}=0$. Set $Z=\sum_{\gamma \in W} Z_{\gamma}$. Observe that (using $m_{\gamma} \le t+1$ for all $\gamma \in W$) \[\mathbb{E}[Z]  \ge |W|e^{-(t+1)}> \sfrac{n}{2(t+1)^5}(n^{-1}\log^6n)> \half \log n \]  while similarly to \eqref{eq: IL} for all $\gamma,\gamma' \in W$ we have that \[\mathrm{Cov}(Z_{\gamma},Z_{\gamma'})/(\mathbb{E}[Z_{\gamma}] \mathbb{E}[Z_{\gamma'}])=e^{m_{\gamma,\gamma'}}-1 \le e^{1/(t+1)^{2}}-1=:\delta_n. \] Hence 
\[\mathrm{Var}(Z) \le \mathbb{E}[Z]+\delta_n(\mathbb{E}[Z])^2,  \]
which by Chebyshev implies that $\Pr[Z=0]< \frac{1}{\mathbb{E}[Z]}+\delta_n $. \qed

\section{The cycle}
\label{s: cycle}
In this section we consider the case that $G$ is the $n$-cycle $\mathrm{C}_n$. In this section we assume that at time 0 there is precisely one walker at each site and that the walkers perform independent continuous-time random walks with jump rate 1.

 We first prove \eqref{eq: cycleupper}. We fix $s=s_n:=C_1 \log^2 n $. By a union bound over the $n$ edges of the cycle, in order to deduce that $\Pr[\mathrm{SC}(\mathrm{C}_n) \ge s] \le C/n $ it suffices to show that for every pair of neighboring vertices $u,v$, the probability that the walkers who started at those vertices do not have a path of acquaintances by time $s$ is at most $Cn^{-2}$.   

 Consider some fixed edge $e=\{u,v\}$. It partitions the vertices into 2 sides (segments)  $A_u,A_v$  of size roughly $n/2$, according to the identity of the end-point to which they are closer to (break a tie arbitrarily). Let $e'$ be the other edge, apart from $e$, which connects $A_u$ and $A_v$. A walker starting from either $u$ or $v$ is extremely unlikely to cross $e'$ in $s$ time units.

 Let $\mathrm{CROSS}_{s}(e,w) $ be the event that the walker who started at vertex $w$ did not cross $e'$ by time $s$ and that her position at time $s$ is at the opposite side of $e$ compared to $w$. 

We now argue that for every $(w,w') \in A_u \times A_v $ the event  $\mathrm{CROSS}_s(e,w) \cap \mathrm{CROSS}_s(e,w') $ ``forces" the walkers who started at $u$ and $v$ to have a path of acquaintances by time $s$ (which uses only walkers whose starting positions are in $\{u,v,w,w' \}$, possibly a subset of this set). Indeed,
the continuous-time setup eliminates the possibility of two walkers swapping positions without meeting. This implies that on the event  $\mathrm{CROSS}_s(e,w) \cap \mathrm{CROSS}_s(e,w') $ it must be the case that by time $s$ the walkers whose starting positions are $w$ and $w'$, resp., have met, and that each of the walkers whose starting positions are $u$ and $v$, resp., must have met at least one of the previous pair of walkers (in fact, this is the case for any pair of vertices $u',v'$ lying in the segment between $w$ and $w'$ which contains $e$). 

This reduces the proof of \eqref{eq: cycleupper} into the following simple calculation: If $C_1$ is taken to be sufficiently large then  $\Pr[\mathrm{CROSS}_s(e,w)] \ge 0.4 $  for every edge $e=\{u,v\}$ and  vertex $w$ of distance at most $10 \log n $ from $e$. Indeed, starting from such a vertex $w$, if $C_1$ is sufficiently large then the probability of crossing $e$ by time $s$ is arbitrarily close to $1/2$ (in fact it is arbitrarily close to 1), and    for all sufficiently large $n$  the probability that $e'$ is crossed by time $s$ is at most $1/20$ (this probability tends to 0 as $n$ to infinity). Hence (using $(\frac{12}{20})^{10 \log n} \le n^{-2} $) $$\Pr[\mathrm{CROSS}_s(e)] \ge 1-Cn^{-2}, \text{ where } \mathrm{CROSS}_s(e):= \bigcup_{w \in A_u, w' \in A_v}\mathrm{CROSS}_s(e,w) \cap \mathrm{CROSS}_s(e,w') .$$ 

This concludes the proof of \eqref{eq: cycleupper}. We now prove \eqref{eq: cyclelower}.   We fix $t=t_{n}=c_1 \log^2 n $ for some constant $c_1$ to be determined later. Observe that if there are at least 2 edges which no walker crossed by time $t$, then deterministically $\SC(\mathrm{C}_n)>t$. Let $\mathrm{NC}_t(e)$ be the event that the edge $e \in E(\mathrm{C}_n) $ was not crossed by any walker by time $t$. We argue that if $c_1$ is taken to be sufficiently small, then for all sufficiently large $n$ and every edge $e$ it holds that  
\begin{equation}
\label{eq: NCe}
\Pr[\mathrm{NC}_t(e) \mid \mathrm{F} ] \ge c_3 n^{-1/4},
\end{equation}
where  $\mathrm{F}$ is the event that no walker performed at least $(\sqrt{ n}/2)-2$ steps by time $t$. We now explain how \eqref{eq: cyclelower} can be derived from \eqref{eq: NCe}. Denote $m:=\lfloor \sqrt{n} \rfloor $. Let $(e_i)_{i=1}^{m}$ be a collection of edges which are all at distance at least $\sqrt{n}$ from one another.  Then (by a union bound over the $n$ walkers) $1-\Pr[\mathrm{F}] \le C_2ne^{-cn^{1/4}} \le C_{2}'e^{-c'n^{1/5}}$. Finally, note that conditioned on $\mathrm{F}$, the events $(\mathrm{NC}_t(e_i))_{i=1}^m$ become independent, and so by  \eqref{eq: NCe} the conditional probability that none of them occur is at most $(1-c_{3}n^{-1/4})^m \le e^{-c'_{3}n^{1/4}} $.

We now prove \eqref{eq: NCe}. It is not hard to see that in order to prove prove \eqref{eq: NCe} it suffices to show that for every $k$ if we put one particle at each site of $\N:=\{1,2\ldots\}$ and the particles perform independent continuous-time SRW on $\Z$ with jump rate 1, then the probability that no walker reached the origin by time $k$ is at least $e^{-M \sqrt{k} } $, for some absolute constant $M$. Denote the probability of the previous event by $p_k$. Let $(S_{r}^{i})_{r \ge 0}$ be a SRW on $\Z$  starting at $i\in \N$. Then by the reflection principle (see e.g.~\cite[\S2.7]{levin2009markov}; the proof in continuous-time is analogous)
\begin{equation}
\label{e:aikaik}
a_i(k):=\mathbb{P}\left[S_{r}^{i}>0, \, \forall r \in [0, k] \right]=\mathbb{P}\left[S_{k}^{0}\in
\{-i+1,\ldots,i \}\right], \quad \text{for all }i \in \N.
\end{equation}
 
Write $b:=\lfloor 4 \sqrt{k} \rfloor $. We argue that there exist constants $c_5,c_6,C_3>0$ such that
\begin{equation}
\label{eq: LCLT}
a_i(k) \ge c_5i/ b, \quad \text{for all } 1 \le i \le b.
\end{equation}
\begin{equation}
\label{eq: LDregime}
\begin{split}
a_i(k) & \ge 1-2 \exp [- \half i^2/k+i^4/4k^3] \ge 1-2e^{-i^{2}/(8k)} \ge e^{-C_3e^{-i^{2}/(8k)}},  \text{ for }b<i \le 1.2k. 
\\ a_i(k) & \ge 1-e^{-c_6 i} \ge e^{-C_3e^{-c_6 i}}, \quad \text{ for all }i > 1.2k.
\end{split}
\end{equation}
The first line follows from the local CLT. The first inequality in the third line follows from the fact that $1-a_i(k) \le \mathbb{P}[\mathrm{Pois}(k) \ge i]$. The last inequalities in the second and third lines follows from the fact that for every $0<c<1$ there exists $C>0$ such that  $1-x \ge e^{-Cx}$,  for all $x \le c$ (applied to $x=2e^{-i^{2}/(8k)}$ for $i>b$). The first inequality in the second line is obtained by noting that for discrete-time SRW  $(\tilde S_{r})_{r \in \Z_+}$  starting at the origin we have that $\mathbb{E}[e^{\lambda \tilde S_{r}}]=\left(\frac{1}{2}e^{\lambda}+\frac{1}{2}e^{-\lambda}\right)^{r} \le e^{\lambda^2 r/2} $ (by comparing Taylor expansion coefficients). Hence,   $$\mathbb{E}[e^{\lambda  S_{k}}] \le \sum_r \Pr[\mathrm{Pois}(k)=r]e^{\lambda^2 r/2}=\exp [k(e^{\lambda^2 /2}-1)] \le \exp [k(\lambda^2/2+(\lambda^2/2)^2)] , $$
as long as $\lambda^2/2 \le 1$ (using $e^b-1 \le b+b^2$ for all $b \in [-1,1 ]$). We set $\lambda=i/k$, so that indeed $\lambda^2/2 \le 1 $ for $i \le 1.2k$. Finally, by \eqref{e:aikaik}, Markov's inequality and our choice of $\lambda$ $$1-a_i(k) \le 2 \mathbb{P}[S_k \ge i] \le 2 \mathbb{E}[e^{\lambda  S_{k}}]e^{-\lambda i}\le 2 \exp [-(i^2/2k)+i^4/(4k^3)].  $$ 

We are now in a position to conclude the proof. By \eqref{eq: LCLT} and Stirling's approximation  $$ \prod_{i \le b }a_i(k) \ge c_5^{b  }b!/b^{b} \ge \sqrt{b}(c_5/e)^b \ge e^{-C_4 \sqrt{k}}. $$
Denote $b':= \lfloor \sqrt{k} \rfloor$. Finally, using \eqref{eq: LDregime} it is not hard to show that 
$$\prod_{i > b }a_i(k) \ge \prod_{\ell: 4 \le \ell \le 1.2b' }a_{\ell b'}^{b'}(k)\prod_{\ell> 1.2b' }a_{\ell}(k) \ge e^{-C_{5} \sqrt{k}}$$
(we leave the details to the reader; Alternatively, since $a_i(k)$ are uniformly bounded away from 0 for $i>b$, we get that $\prod_{i > b }a_i(k) \ge e^{-C \sum_{i>b}(1-a_i(k)) } $ and the reasoning in the following remark yields that $ \sum_{i>0 }(1-a_i(k))= \sum_{i>0}\Pr_i[T_0 \le k] \le C' \sqrt{k} $). We are done as $p_k:= \prod_{i >0 }a_i(k)$. \qed
\begin{remark}
\label{rem: Poistrick}
Consider the usual Poisson setup. The proof of the upper bound on $\SC(\mathrm{C}_n)$ is similar to the case of one walker per site at time 0. However, as we now explain, the proof of the lower bound on $\SC(\mathrm{C}_n)$  becomes much simpler. While this was already noted in the introduction after Theorem \ref{thm: reg}, we present below an alternative argument, as it will be used in the analysis of Examples \ref{rem: extermal} and \ref{ex:nonregular}.

 For simplicity, we present the argument in the discrete-time setup. Let $v$ be a vertex in $\mathrm{C}_n$.  Denote $\nu_t:=\sum_{i=0}^{t}P^i(v,v)$ (this sum is independent of $v$). For $t \le n^2$ we have that $\nu_t=\Theta (\sqrt{t+1}) $. By Fact \ref{fact: thinning}, the total number of walkers to reach vertex $v$ by time $t$ has a Poisson distribution with mean (by reversibility)
\begin{equation*}
\begin{split}
\mu_t & :=\sum_{u}\mathbb{P}_{u}[T_v \le t]= \sum_{u}\sum_{j=0}^t \mathbb{P}_{u}[T_v = j] \le\frac{1}{\nu_t} \sum_{u}\sum_{j=0}^t \mathbb{P}_{u}[T_v = j] \sum_{i=0}^{2t-j}P^i(v,v) \\ &  \le \frac{C}{\sqrt{t+1}}\sum_{u}\sum_{i=0}^{2t}P^i(u,v) = \frac{C(2t+1)}{\sqrt{t+1}}.
\end{split}
\end{equation*}
Thus if $t+1< (\frac{\log n}{8C})^2$ we get that the probability that no walker reached $v$ by time $t$ is at least $e^{-\mu_t} \ge n^{-1/4}$. This implies the lower bound on $\SC(\mathrm{C}_n)$ as in the proof above.
\end{remark}
\section{Expanders}
\label{s: expanders}
In this section we study the case that $G$ is a $d$-regular $\la$-expander. It is not difficult to extend the results to the case $G$ is an expander of maximal degree $d$.
LSRW on a regular $\lambda$-expander $G=(V,E)$ mixes rapidly in the following sense 
 \begin{equation}
 \label{eq: mix}
 \max_{x,y \in V}|P^t(x,y)-\pi_y|\le (1-\lambda)^{t}, \quad \text{ for all }t.
\end{equation}
Indeed $\max_{x \in V}|P^t(x,x)-\pi_x|\le (1-\lambda)^{t}$
follows from the spectral decomposition of $P^t(x,x)$ along with the non-negativity of the eigenvalues of $P$.  However, for LSRW on a regular graph $\max_{x,y \in V}|P^t(x,y)-\pi_y|=\max_{x\in V}|P^t(x,x)-\pi_x|$, since in general (cf.~\cite[(2.2)]{spectral}
for even $t$ and \cite[p.~135]{levin2009markov} for moving from even $t$
to odd $t$ using laziness)
\begin{equation*}
\label{eq: p(x,y)}
\max_{x,y} |P^{t}(x,y)-\pi_y| \le \sqrt{ \max_{u,v}(\pi_u/\pi_v)} \max_x |P^t(x,x)-\pi_x|  .
\end{equation*} 
\subsection{Proof of Theorem \ref{thm: et1}}
We  argue that Theorem \ref{thm: et1} follows from Theorem \ref{thm: et2}. This follows at once from Corollary \ref{cor: prey} below. Indeed, after a (``giant") class of walkers $\mathcal{A} $ of size at least $n/6$ emerges, by Corollary \ref{cor: prey} (using a union bound over the walkers not in $\mathcal{A}$, together with the concentration of the total number of walkers around $n$), w.p.~at least $1-C_{2}n^{-1}$, the additional amount of time until every walker not from $\mathcal{A} $ will meet some walker from $\mathcal{A} $ is at most  $\lceil
C \lambda^{-1}\log 
n \rceil $. \qed
\begin{lemma}
\label{lem: hitinexpander}
Let $G$ be a connected $d$-regular $n$-vertex $\lambda$-expander. Then there exists $C>0 $ such that for $t:= \lceil C \lambda^{-1}\log
n \rceil  $ 
and every $v, u_{1},u_{2},\ldots,u_{t}$ sequence of vertices,
the probability that a LSRW started at $v$  visits some $u_i$ at time $i$ for some $1 \le i \le t$ is at least $ \frac{1}{8} \min ( \frac{ \lambda t}{n},1)$.
\end{lemma}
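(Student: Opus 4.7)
The plan is a second moment (Paley--Zygmund) argument applied to the ``hit counter'' $N:=\sum_{i=1}^{t}\mathbf{1}_{A_i}$, where $A_i:=\{X_i=u_i\}$ and $(X_s)_{s\ge 0}$ is a LSRW started at $v$. Then $\Pr[N\ge 1]\ge(\mathbb{E} N)^2/\mathbb{E}[N^2]$, so it suffices to lower bound $\mathbb{E} N$ and upper bound $\mathbb{E}[N^2]$ using the spectral mixing estimate \eqref{eq: mix}.

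For the first moment, I would first fix an absolute constant $C'>0$ (using $\log\tfrac{1}{1-\lambda}\ge\lambda$) so that $t_0:=\lceil C'\lambda^{-1}\log n\rceil$ satisfies $(1-\lambda)^{t_0}\le\tfrac{1}{10n}$, and then take $C:=KC'$ in the statement of the lemma with $K$ a large absolute constant. For every $i\in[t_0,t]$ and every vertex $u_i$, \eqref{eq: mix} gives $\Pr[A_i]=P^i(v,u_i)\ge\tfrac{1}{n}-(1-\lambda)^i\ge\tfrac{9}{10n}$, hence $\mathbb{E} N\ge\tfrac{9(t-t_0)}{10n}\ge c_1\tfrac{t}{n}$ with $c_1$ as close to $9/10$ as desired once $K$ is large. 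For the second moment, the Markov property combined with \eqref{eq: mix} on the second leg gives, for $i<j$,
\begin{equation*}
\Pr[A_i\cap A_j]=P^i(v,u_i)\,P^{j-i}(u_i,u_j)\le\Pr[A_i]\bigl(\tfrac{1}{n}+(1-\lambda)^{j-i}\bigr),
\end{equation*}
so summing over $j>i$ yields $\sum_{j>i}\Pr[A_i\cap A_j]\le\Pr[A_i]\bigl(\tfrac{t}{n}+\tfrac{1}{\lambda}\bigr)$, and therefore $\mathbb{E}[N^2]\le\mathbb{E} N\cdot\bigl(1+\tfrac{2t}{n}+\tfrac{2}{\lambda}\bigr)$.

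Combining the two moment bounds yields $\Pr[N\ge 1]\ge\mathbb{E} N/(1+2t/n+2/\lambda)$. Since $\lambda\le 1$, we can absorb the additive $1$ into $2/\lambda$, and a short case check ($1/\lambda\gtreqless t/n$) shows the denominator is at most $5\max(1/\lambda,t/n)$. Hence
\begin{equation*}
\Pr[N\ge 1]\ge\frac{c_1 t/n}{5\max(1/\lambda,t/n)}=\tfrac{c_1}{5}\min\!\bigl(\tfrac{\lambda t}{n},1\bigr),
\end{equation*}
and choosing $K$ large enough that $c_1/5\ge 1/8$ finishes the proof. There is no serious obstacle: the only delicate point is the constant bookkeeping (matching $K$ and $C'$ so that the numeric constant lands at $1/8$) and verifying that $1+2t/n+2/\lambda$ is comparable up to a constant to $\max(1/\lambda,t/n)$, which is precisely what produces the $\min(\lambda t/n,1)$ form of the conclusion.
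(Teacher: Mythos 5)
Your proposal is correct and is essentially the paper's own argument: the paper also applies the second moment method to $Y:=\sum_{i=1}^{t}\mathbf{1}\{X_i=u_i\}$, lower bounds $\mathbb{E}[Y]$ by roughly $t/(2n)$ via \eqref{eq: mix}, bounds $\mathbb{E}[Y_iY_{i+j}]\le\mathbb{E}[Y_i](n^{-1}+(1-\lambda)^{j})$ by the Markov property, and concludes $\Pr[Y>0]\ge(\mathbb{E}Y)^2/\mathbb{E}[Y^2]\ge\frac{1}{8}\min(\lambda t/n,1)$. The only differences are in the constant bookkeeping, which you handle correctly.
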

\begin{proof}
Let $(X_s)_{s \ge 0}$ be a LSRW on $G$ started at $v$. Let $Y_{i}=1_{X_i=u_i} $ and $Y:=\sum_{i=1}^t Y_i$, where $t:= \lceil C \lambda^{-1}\log n \rceil $, for some constant $C$, to be determined shortly. By \eqref{eq: mix}, if $C$ is sufficiently large, $\mathbb{E}[Y] \ge t/(2n) $, whereas $\mathbb{E}[Y_iY_{i+j}] \le\mathbb{E}[Y_i]P^{j}(u_i,u_{i+j}) \le \mathbb{E}[Y_i](n^{-1}+(1-\lambda)^{j})$ and so $\mathbb{E}[Y^2] \le 2 \mathbb{E}[Y](\lambda^{-1}+t/n )   $. Finally, $\Pr[Y>0] \ge \frac{ \mathbb{(E}[Y])^{2}}{\mathbb{E}[Y^2]} \ge  \frac{\mathbb{E}[Y]}{2(\gamma^{-1}+\frac{t}{n} )} \ge \frac{1}{8} \min \{ \frac{ \lambda t}{n},1 \} $.
\end{proof}
\begin{corollary}
\label{cor: prey}
Let $G=(V,E)$ be a connected $d$-regular $n$-vertex $\lambda$-expander. Place $\ell:= \lceil n/6 \rceil $ walkers arbitrarily on $G$ along with one extra walker. Assume all walkers perform independent LSRWs on $G$ simultaneously. Then there exist some  constants $C,n_{0}$, independent of $G,d,\lambda$ and the initial locations of the walks, such that if $n \ge n_{0}$
the extra walker meets at least one of the other $\ell$ walkers by time $t:= \lceil C \lambda^{-1}\log n \rceil$ with probability at least $1-1/n^2 $ (as usual, meeting means visiting the same vertex at the same time).   
\end{corollary}
\begin{proof}
First,  condition on the walk that the \textcolor{red}{extra walker} performed by time $t$, denoted by $\gamma$. Then use Lemma \ref{lem: hitinexpander} and independence to obtain the desired estimate for the conditional probability (uniformly, for all possible values of $\gamma$). Finally, average over $\gamma $.
\end{proof}

\subsection{Proof of Theorem \ref{thm: et2}}
\noindent
\emph{Proof:} We adapt a technique from \cite[Proposition 3.1]{alon} to our setup. Let $t \le n$ to be determined later. Set $s=s(t,\la):=\lceil8(t+2)/\lambda \rceil $. Call a class of walkers at time $s$ \textbf{good} if the collection of walkers from this class occupy at time $s$ at least $t$ vertices.
By Corollary \ref{cor: Stage1} below (if $t$ is sufficiently large), w.p.\ at least $1-\exp(-c_{1}n/d^{4 (s+1 )})$ at time $s$  there exists a collection containing at most $n/(2t)$ good classes such that the walkers belonging to the union of these classes occupy at least $n/2$ vertices at time $s$. We conditioned on this event and on the identity of the good classes from this collection and on the positions at time $s$ of the corresponding walkers. We claim, that if $t$ is taken to be sufficiently large, then w.p.~at least $1-\exp(-c_2 n)$ there is no way of splitting this collection into 2 (disjoint) collections, $\mathcal{A},\mathcal{B}$, such that
\begin{itemize}
\item[(i)]
The walkers belonging to the union of the classes in  $\mathcal{A}$ (resp.~$\mathcal{B}$), occupy at time $s$ at least $n/6$ vertices.
\item[(ii)]
No walker from (the union of the classes in)   $\mathcal{A}$ met a walker from $\mathcal{B}$ by time $s+r$, where $r:=\lceil C \lambda^{-1}
\rceil $ for some sufficiently large constant $C$. 
\end{itemize}
Indeed, this follows from Lemma \ref{lem: stage2} below which asserts that for any choice of $\mathcal{A},\mathcal{B}$ satisfying (i), the probability that (ii) holds is at most $e^{-c_2n}$, by a union bound over all  $ \le 2^{n/(2t)}$ such partitions (and picking $t$ such that  $t \ge \lceil 1/c_2 \rceil$; Recall that in the considered collection of good classes  there are at most $n/(2t)$ classes). \qed
\begin{lemma}
\label{lem: step1}
Let $G=(V,E)$ be a connected $d$-regular $n$-vertex $\lambda$-expander. Consider the SN model on $G$. Let $\mathcal{W}_u(s)$ denote the walkers who occupy vertex $u$ at time $s$. Write $u \Leftrightarrow_{s} v $ if there exists a pair of walkers $(w,w') \in \mathcal{W}_u(s) \times \mathcal{W}_v(s) $ who met each other by time $s$.  Let $A_{u,r}(s)$ be the event that $|\{v:u \Leftrightarrow_{s} v\}| \ge r$. Denote $s:=\lceil8(t+2)/\lambda \rceil $. Then there exists an absolute constant $c>0 $ such that if $ s \le n $ then $$\Pr[ A_{u,t }(s)] \ge (1-e^{-1})-e^{-ct}. $$
\end{lemma}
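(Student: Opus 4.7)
The plan is a Poisson--Palm reduction followed by a second-moment and Chernoff argument. By Fact \ref{fact: thinning}, $|\mathcal{W}_u(s)| \sim \mathrm{Poisson}(1)$, so $\Pr[\mathcal{W}_u(s) \neq \emptyset] = 1-e^{-1}$. On this event pick any $w_0 \in \mathcal{W}_u(s)$. Applying Palm calculus for the Poisson measure $p(\cdot)$ on length-$s$ walks to the set $\{\gamma : \gamma_s = u\}$ (which has total $p$-mass $1$ by regularity), the trajectory $\gamma$ of $w_0$, conditional on $\mathcal{W}_u(s) \neq \emptyset$, has law $p(\gamma)$ on $\{\gamma : \gamma_s = u\}$ --- which by reversibility equals the law of the time-reversal of a LSRW of length $s$ started at $u$ --- while the remaining walkers form an independent SN model.

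For each $v \in V$ let $N_v$ count walkers from the remaining SN model whose trajectory $\delta$ satisfies $\delta_s = v$ and $\delta_\tau = \gamma_\tau$ for some $\tau \in \{0,\dots,s\}$; any such walker paired with $w_0$ witnesses $u \Leftrightarrow_s v$, so $|\{v : u \Leftrightarrow_s v\}| \ge |\{v : N_v \ge 1\}|$. By Fact \ref{fact: thinning}, conditional on $\gamma$ the family $(N_v)_{v \in V}$ consists of independent $\mathrm{Poisson}(\mu_v)$ variables --- independent because each walk $\delta$ contributes only to $N_{\delta_s}$ --- with $\mu_v = \sum_{\delta : \delta_s = v,\ \exists \tau:\delta_\tau = \gamma_\tau} p(\delta) \le \sum_{\delta:\delta_s = v}p(\delta) = 1$.

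The key estimate is the uniform-in-$\gamma$ bound $\sum_v \mu_v \ge 2(t+2)$. Set $m_\delta := |\{\tau : \delta_\tau = \gamma_\tau\}|$, so that $\sum_v \mu_v = \sum_\delta p(\delta) \mathbf{1}_{m_\delta \ge 1}$. Cauchy--Schwarz gives $\sum_\delta p(\delta) \mathbf{1}_{m_\delta \ge 1} \ge (\sum_\delta p(\delta) m_\delta)^2 / \sum_\delta p(\delta) m_\delta^2$, and a Fubini computation yields $\sum_\delta p(\delta) m_\delta = s+1$ and $\sum_\delta p(\delta) m_\delta^2 = (s+1) + 2\sum_{\tau < \tau'} P^{\tau'-\tau}(\gamma_\tau,\gamma_{\tau'})$. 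Using the spectral-gap estimate \eqref{eq: mix}, the off-diagonal sum is at most $s^2/n + (s+1)/\lambda$, which together with $s \le n$ and $\lambda \le 1$ gives $\sum_\delta p(\delta) m_\delta^2 \le 4(s+1)/\lambda$; combined with $s = \lceil 8(t+2)/\lambda \rceil$ this yields $\sum_v \mu_v \ge 2(t+2)$. Applying $1 - e^{-x} \ge (1-e^{-1})x$ on $[0,1]$ (valid since $\mu_v \le 1$) then gives $\mathbb{E}[|\{v:N_v\ge 1\}| \mid \gamma] \ge 2(1-e^{-1})(t+2)$, which exceeds $t$ by a multiplicative factor bounded away from $1$ uniformly in $t,\lambda,n$.

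Since $(\mathbf{1}_{N_v \ge 1})_{v \in V}$ are independent Bernoullis conditional on $\gamma$, a multiplicative Chernoff bound yields $\Pr[|\{v:N_v \ge 1\}| < t \mid \gamma] \le e^{-ct}$ for some absolute $c>0$. Averaging over $\gamma$ and multiplying by $\Pr[\mathcal{W}_u(s) \neq \emptyset] = 1 - e^{-1}$ gives $\Pr[A_{u,t}(s)] \ge (1-e^{-1})(1 - e^{-ct}) \ge (1-e^{-1}) - e^{-ct}$, as desired. The main technical obstacle is controlling the constants in the Cauchy--Schwarz step --- the lower bound on $\sum_v \mu_v$ must beat $t$ by a factor bounded away from $1$ by an absolute constant so that the Chernoff step produces an absolute rate $c$; once the pairwise covariances are bounded cleanly via \eqref{eq: mix} with the choice $s = \lceil 8(t+2)/\lambda\rceil$ (whose factor of $8$ is tuned exactly to make this go through), everything else is routine.
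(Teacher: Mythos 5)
Your proposal is correct and follows essentially the same route as the paper's proof: condition on a walker occupying $u$ at time $s$ and on its trajectory, use reversibility and Poisson thinning to see that the numbers of meeting walkers ending at each $v$ are independent Poissons with means at most $1$, lower bound the total mean by roughly $\lambda s$ via \eqref{eq: mix}, and finish with Lemma \ref{lem: 1-e-1} plus a Bernstein/Chernoff bound. The only (cosmetic) difference is in the intermediate step converting the expected number of intersection times $s+1$ into a bound on the total meeting probability: you apply Cauchy--Schwarz globally to $\sum_\delta q(\delta)\mathbf{1}_{m_\delta\ge 1}$, whereas the paper uses a per-starting-vertex first-visit decomposition $b_{v,\mathbf{w}}\le \mathbb{P}_v[\tau_{\mathbf{w}}\le s](\lambda^{-1}+1)$; both rest on the same second-moment idea and yield the same order.
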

\begin{proof}
 We have that $\Pr[ |\mathcal{W}_u(s)|>0]=1-e^{-1}$. Throughout the proof we condition on   $|\mathcal{W}_u(s)|>0$. Moreover, we fix some walker $w \in \mathcal{W}_u(s)$ and condition on the walk $\mathbf{w}=(\mathbf{w}(0),\ldots,\mathbf{w}(s)=u)$ that she performed by time $s$. By averaging, it suffices to show that the conditional probability that $A_{u,t }(s)$ fails is at most $e^{-ct}$ for some constant $c>0$ independent of $\mathbf{w}$. Below, all probabilities and expectations are the conditional ones, given $\mathbf{w}$.

Let $(X_k)_{k \ge 0}$ be a LSRW on $G$. \textcolor{red}{We denote the law of  $(X_k)_{k \ge 0}$ given $X_0=v$ by $\mathbb{P}_v$.} Let $\tau_{\mathbf{w}}:=\inf \{k \in [0,s]  :X_k=\mathbf{w}(s-k)\}$.
By Fact \ref{fact: thinning} and reversibility, the (conditional) distribution (given $\mathbf{w}$) of the number of walkers not from  $\mathcal{W}_u(s)$  that $w$ met by time $s$ has a Poisson distribution with mean  $\mu_{\mathbf{w}}:= \sum_{v:v \neq u}\mathbb{P}_{v}[\tau_{\mathbf{w}} \le s]$. We argue that  $\mu_{\mathbf{w}} \ge 4t $.
Indeed, let $b_{v,\mathbf{w}}:=\sum_{i
=0}^{s}P^i(v,\mathbf{w}(s-i))$. By reversibility and \eqref{eq: mix} $ \mu'_{\mathbf{w}}:= \sum_{v:v \neq u }b_{v,\mathbf{w}}=s- b_{u,\mathbf{w}}\ge s-\lambda^{-1}-1$. Again by \eqref{eq: mix} and reversibility, for all $v$ $$b_{v,\mathbf{w}}=\sum_{i \le s}\mathbb{P}_{v}[\tau_{\mathbf{w}} =i] \sum_{j
=0}^{s-i}P^j(\mathbf{w}(s-i),\mathbf{w}(s-i-j)) \le \sum_{j=0}^{s-1} \max_{x,y}P^j(x,y)\mathbb{P}_{v}[\tau_{\mathbf{w}}
\le s ]$$ $$ \le\mathbb{P}_{v}[\tau_{\mathbf{w}} \le s ](\lambda^{-1}+1) \le 2 \lambda^{-1} \mathbb{P}_{v}[\tau_{\mathbf{w}}
\le s ].$$ Summing over all $v  \in V \setminus \{ u\}$ we get that  $\mu_{\mathbf{w}} \ge (\lambda/2) \mu'_{\mathbf{w}} \ge (\lambda/2)(s-\lambda^{-1}-1) \ge 4t $, as desired.  

Let $Z_v$ be the number of walkers in $\mathcal{W}_v(s)$ who met $w$ by time $s$. Then (conditioned on $\mathbf{w}$) $(Z_v)_{v:v \neq u}$ are independent Poisson random variables such that $\mathbb{E}[ Z_{v}] \le 1 $ for all $v$ and $\sum_{v:v
\neq u} \mathbb{E}[ Z_{v}] \ge 4t$. Finally, by (the elementary) Lemma \ref{lem: 1-e-1} below we have that $\mathbb{E}[1_{Z_v \ge 1}] \ge (1-e^{-1})\mathbb{E}[ Z_{v}]$ for all $v$, and so $\sum_{v:v
\neq u }\mathbb{E}[1_{Z_v \neq 0 }]>(1-e^{-1})4t>2t $.   By Bernstein's inequality \eqref{e:berns} $\Pr[\sum_{v:v \neq u }1_{Z_v \neq 0 }\le t] \le e^{-ct}$, for some constant $c>0$, as desired.        
\end{proof}
\begin{corollary}
\label{cor: Stage1}
In the setup and notation of Lemma \ref{lem: step1}, there exist  $t_0,c_{1}>0$ such that
$$\Pr[\sum_{u \in V} 1_{A_{u,t }(s)}<n/2] \le \exp(-c_{1}n/d^{4(s+1)}), \text{ for all } t \ge t_0 \text{ such that }s=\lceil8(t+2)/\lambda \rceil \le n. $$  
\end{corollary}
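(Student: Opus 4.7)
The plan is to combine the spatial locality of the events $A_{u,t}(s)$ with a coloring-based union bound, applying Chernoff inside each color class.

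First, observe that $A_{u,t}(s)$ is measurable with respect to the numbers and trajectories (up to time $s$) of the walkers whose initial positions lie in the ball $B(u,2s) := \{v\in V : \dist(v,u) \le 2s\}$: any walker in $\mathcal{W}_u(s)$ must start within distance $s$ of $u$, and any walker that met it by time $s$ must likewise start within distance $2s$ of $u$. By Fact~\ref{fact: thinning} and the independence of the walker trajectories, whenever $U\subseteq V$ is pairwise at $G$-distance $>4s$, the family $\{A_{u,t}(s)\}_{u\in U}$ is mutually independent. Consider then the graph $G^{4s}$ on $V$ whose edges join $u,v$ with $\dist(u,v)\le 4s$; its maximum degree is at most $|B(u,4s)|-1\le d^{4s+1}$, so greedy coloring yields a proper $K$-coloring with $K\le d^{4s+1}+1\le d^{4(s+1)}$, inducing a partition $V=V_1\sqcup\cdots\sqcup V_K$ with each $V_i$ pairwise $>4s$-separated and hence $\{A_{u,t}(s)\}_{u\in V_i}$ independent.

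Choose $t_0$ large enough that Lemma~\ref{lem: step1} yields $\Pr[A_{u,t}(s)]\ge p$ for some absolute $p>1/2$ (say $p=0.6$) whenever $t\ge t_0$. Writing $X_i := \sum_{u\in V_i}\mathbf{1}_{A_{u,t}(s)}$, Hoeffding's inequality applied to the independent Bernoullis inside $V_i$ gives $\Pr[X_i<0.55\,|V_i|]\le e^{-c_2|V_i|}$ for an absolute $c_2>0$. Call $V_i$ \emph{large} if $|V_i|\ge n/(20K)$; the large classes then cover at least $19n/20$ vertices. On the event that every large $V_i$ satisfies $X_i\ge 0.55\,|V_i|$ one has $\sum_{u\in V}\mathbf{1}_{A_{u,t}(s)}\ge 0.55\cdot\tfrac{19n}{20}>n/2$, so a union bound over the at most $K$ large classes yields
\[
\Pr\!\left[\sum_{u\in V}\mathbf{1}_{A_{u,t}(s)}<\tfrac{n}{2}\right] \le K\exp\!\left(-\frac{c_2 n}{20K}\right) \le d^{4(s+1)}\exp\!\left(-\frac{c_2 n}{20\,d^{4(s+1)}}\right) \le \exp\!\left(-\frac{c_1 n}{d^{4(s+1)}}\right),
\]
the last step absorbing the polynomial prefactor $K$ into the exponent (and being vacuous whenever the claimed bound already exceeds~$1$).

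The main obstacle is the short-range positive dependence among the events $A_{u,t}(s)$, which precludes a direct Chernoff bound on $\sum_u\mathbf{1}_{A_{u,t}(s)}$; the coloring step is what localises this dependence into blocks of radius $2s$, producing the $d^{4(s+1)}$ factor in the exponent. A secondary technical point is that $p$ must exceed $1/2$ by a definite absolute margin for Hoeffding to bite against the threshold $n/2$, which is exactly why Lemma~\ref{lem: step1}'s $e^{-ct}$ error term forces the hypothesis $t\ge t_0$.
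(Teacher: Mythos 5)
Your blocking argument is a genuinely different route from the paper's. The paper's proof is a one-line appeal to Azuma's inequality for the Doob martingale obtained by exposing the walkers vertex by vertex: each exposure can alter $1_{A_{u,t}(s)}$ only for $u$ in a ball of radius $2s$, so the increments are bounded by $d^{2(s+1)}$ and Azuma gives $\exp\bigl(-c\,n^2/(n\,d^{4(s+1)})\bigr)$ directly, with no prefactor. You instead exploit the same locality radius to extract, via a greedy colouring of the ``distance $\le 4s$'' graph, $K\le d^{4(s+1)}$ colour classes of mutually independent indicators, apply Hoeffding inside each class, and union bound. The locality and independence claims are correct (a walker influencing $A_{u,t}(s)$ must start in $B(u,2s)$, by exactly the argument you give), the constant bookkeeping ($p=0.6$ from Lemma~\ref{lem: step1} for $t\ge t_0$, the $0.55\cdot\tfrac{19}{20}>\tfrac12$ covering step) is fine, and in the regime where the corollary is actually used (Theorems~\ref{thm: et2}--\ref{thm: et1}, where $\lambda\log_d n\ge C_0$ forces $d^{4(s+1)}\ll n$) your bound is equivalent to the paper's.

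There is, however, a genuine gap in the last displayed step: the prefactor $K$ cannot always be absorbed. Absorption requires roughly $K\log K\le c\,n$, and the hypotheses only guarantee $s\le n$. Take $G=K_n$: every two vertices are within distance $4s$, so $K=n$, the union bound reads $n\,e^{-c_2/20}>1$, while the target $\exp\bigl(-c_1 n/d^{4(s+1)}\bigr)=\exp\bigl(-c_1 n^{1-4(s+1)}\bigr)$ is strictly below $1$; hence the final chain of inequalities is false for every choice of $c_1>0$. Your parenthetical escape clause never activates, because $\exp\bigl(-c_1 n/d^{4(s+1)}\bigr)<1$ always, so the claim is never literally vacuous — only very weak. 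The fix is either to note that the corollary is only invoked when $d^{4(s+1)}\le n^{1/2}$ (say), in which case $K\log K=o(n)$ and your argument closes, or to replace the colouring/union-bound step by the bounded-differences (Azuma/McDiarmid) route, which yields the stated exponent with no prefactor and hence covers all $s\le n$ uniformly.
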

\begin{proof}
Use Lemma \ref{lem: step1} together with Azuma inequality on an appropriate Doob's sequence, by exposing the value of indicators on the l.h.s.~sequentially, one at a time. The absolute value of the increments are bounded by the maximal size of a ball of radius $2s$ ($\le d^{2(s+1)} $).
\end{proof}
\begin{lemma}
\label{lem: stage2}
Let $G=(V,E)$ be a connected $d$-regular $n$-vertex $\lambda$-expander. Let $0<\delta \le 1/2$. Let $A,B \subset V $ be two disjoint sets of size at least $\delta n$. Put one walker at each vertex of $A \cup B$ and let them perform independent LSRWs. There exists an absolute constant $c>0$ such that the probability that no walker from $A$ met a walker from $B$ by time
$r:=\lceil \lambda^{-1}\log (4/\delta^{2}) \rceil $ is at most $\exp[-c \delta^2 n]$. 
\end{lemma}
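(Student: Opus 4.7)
The plan is to bound $\Pr[T=0]$ where
\[T := \sum_{a \in A,\, b \in B}\mathbf{1}\{X_a(r) = X_b(r)\}\]
is the number of $A$-$B$ pairs coinciding at time exactly $r$; since $T \ge 1$ forces a meeting by time $r$, it suffices to show $\Pr[T=0] \le \exp(-c\delta^2 n)$. The approach is the Laplace transform estimate $\Pr[T=0] \le \mathbb{E}[e^{-T}]$ coupled with the $L^2$ spectral decay $\|P^r g\|_{L^2(\pi)} \le (1-\lambda)^r \|g\|_{L^2(\pi)}$ for mean-zero $g$, where $\pi$ is uniform on $V$ and $\|g\|_{L^2(\pi)}^2 = n^{-1}\sum_v g(v)^2$.

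Condition on the $A$-walks and set $m_A(v) := \#\{a \in A : X_a(r) = v\}$, $S_r^A := \{v : m_A(v) \ge 1\}$, $K_A := |S_r^A|$, and $q_b := (P^r \mathbf{1}_{S_r^A})(b)$. Conditional independence of the $B$-walks combined with the elementary inequality $e^{-m} \le 1 - (1-e^{-1})\mathbf{1}\{m \ge 1\}$ for $m \in \mathbb{Z}_{\ge 0}$ yield
\[\mathbb{E}[e^{-T}\mid A] = \prod_{b \in B}\sum_v P^r(b,v)e^{-m_A(v)} \le \exp\Bigl(-(1-e^{-1})\sum_{b \in B} q_b\Bigr),\]
reducing the task to showing $\sum_{b \in B} q_b \ge c'\delta^2 n$ with probability $\ge 1 - e^{-c\delta^2 n}$ over the $A$-walks.

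I would establish this in two stages; the choice $r \ge \lambda^{-1}\log(4/\delta^2)$ gives $(1-\lambda)^{2r} \le \delta^4/16$. Stage 1 (spread of $A$): applied to $\mathbf{1}_A - |A|/n$, the spectral bound yields $\sum_v \bigl(P^r(\mathbf{1}_A - |A|/n)\bigr)^2(v) \le (\delta^4/16)|A| = O(\delta^5 n)$, so by Chebyshev $(P^r\mathbf{1}_A)(v) \ge \delta/2$ for all but $O(\delta^3 n)$ vertices; together with $\mathbb{E}[K_A] \ge \sum_v(1 - e^{-(P^r \mathbf{1}_A)(v)})$ and $1 - e^{-\delta/2} \ge \delta/4$ for $\delta \le 1/2$, this gives $\mathbb{E}[K_A] \ge c_0 \delta n$. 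Since $K_A$ is $1$-Lipschitz in the $|A|$ independent $A$-walks (modifying one walk changes the endpoint set by at most one vertex), McDiarmid's inequality gives $\Pr[K_A < c_0 \delta n/2] \le \exp(-c_1 \delta n) \le \exp(-c_1 \delta^2 n)$. Stage 2 (pointwise bound on $q_b$): on the event $\{K_A \ge c_0 \delta n/2\}$, applying the spectral bound to $\mathbf{1}_{S_r^A} - K_A/n$ shows that the ``atypical'' set $\{v : |(P^r\mathbf{1}_{S_r^A})(v) - K_A/n| > K_A/(2n)\}$ has size $O(\delta^2 n)$. For $\delta$ below an absolute constant this size is at most $|B|/2$, so at least $|B|/2$ walkers $b \in B$ satisfy $q_b \ge K_A/(2n) \ge c_0\delta/4$, giving $\sum_{b \in B} q_b \ge c''\delta^2 n$ and hence $\mathbb{E}[e^{-T}\mid A] \le \exp(-c'''\delta^2 n)$. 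The complementary range $\delta = \Theta(1)$ is easier, since then $\mathbb{E}[T] \ge cn$ and concentration follows directly.

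Combining the two estimates gives $\Pr[T=0] \le \exp(-c_1\delta^2 n) + \exp(-c'''\delta^2 n) \le \exp(-c \delta^2 n)$ after adjusting constants. The main technical obstacle is Stage 2 — transferring the $L^2$ spectral estimate on $P^r\mathbf{1}_{S_r^A}$ into a pointwise lower bound $q_b \ge K_A/(2n)$ valid for at least half of the $\delta n$ walkers in $B$. This hinges on the spectrally atypical subset of $V$ having size $O(\delta^2 n)$, which is smaller than $|B| = \delta n$ only by a factor of $\delta$, just enough to push the argument through once $\delta$ is below an absolute threshold.
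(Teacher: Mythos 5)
Your argument is, at its core, the paper's own two--stage strategy with the roles of $A$ and $B$ interchanged. The paper first shows that the $B$-walkers occupy a set $D_r$ of size at least $\delta n/4$ with probability $1-e^{-c\delta n}$, using the clean bound $\mathbb{E}|D_r|\ge (1-e^{-1})|B|$ (which needs no spectral input, only $1-\prod_b(1-P^r(b,u))\ge (1-e^{-1})P^r(u,B)$) together with Azuma; it then bounds the probability that no $A$-walker lands in $D_r$ at time $r$ by $\exp[-|A|\Pr^r_{\pi_A}(D_r)]$ and estimates $\Pr^r_{\pi_A}(D_r)\ge \pi(D_r)-\tfrac12\|\Pr^r_{\pi_A}-\pi\|_{1,\pi}\ge \delta/4-\delta/8$ via the Poincar\'e inequality. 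Your Stage~1 plays the role of the occupied-set concentration (with a spectral/Chebyshev estimate and McDiarmid where the paper uses the simpler $(1-e^{-1})|B|$ bound plus Azuma), and your Laplace-transform step $\mathbb{E}[e^{-T}\mid A]\le \exp(-(1-e^{-1})\sum_b q_b)$ is the exact analogue of the paper's product bound $\prod_a(1-P^r(a,D))\le \exp(-\sum_a P^r(a,D))$. The genuine difference is Stage~2: you convert the $L^2$ decay into a pointwise bound $q_b\ge K_A/(2n)$ for most $b\in B$ via Chebyshev, whereas the paper works in aggregate with the $L^1$ bound $\|\Pr^r_{\pi_A}-\pi\|_{1,\pi}\le (n/|A|)(1-\lambda)^r\le \delta/4$; the aggregate route is what lets the paper handle all $\delta\le 1/2$ uniformly, with no threshold on $\delta$.

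The one real gap is your fallback for $\delta$ above the absolute threshold. The assertion that for $\delta=\Theta(1)$ one has $\mathbb{E}[T]\ge cn$ and ``concentration follows directly'' is not justified: $T$ is a quadratic functional of the walks whose change under resampling a single $A$-walk is bounded only by $\max_v\#\{b: X_b(r)=v\}$, which can be as large as $|B|$, so McDiarmid/Azuma does not give exponential concentration, and a second-moment bound yields only a polynomially small bound on $\Pr[T=0]$, far short of $e^{-c\delta^2 n}$. (Even the claim $\mathbb{E}[T]\ge cn$ needs an argument, since the pointwise bound $P^{2r}(a,b)\ge 1/n-(1-\lambda)^{2r}$ is useless here because $(1-\lambda)^{2r}\approx\delta^4/16\gg 1/n$; one must go through $\langle P^{2r}\mathbf{1}_A,\mathbf{1}_B\rangle$ and Cauchy--Schwarz.) This is fixable without new ideas: either track the constants in Stages~1--2 more carefully (with $\mathbb{E}[K_A]\gtrsim 0.29\,\delta n$ and a concentration step losing only a factor $0.9$ rather than $1/2$, the Stage~2 atypical set has size at most roughly $2n\delta^3$, which is below $|B|/2$ for all $\delta\le 1/2$), or replace the pointwise Stage~2 estimate by the paper's aggregate one, $\sum_b q_b=|B|\,\Pr^r_{\pi_B}(S^A_r)\ge |B|\,(K_A/n-\tfrac12\|\Pr^r_{\pi_B}-\pi\|_{1,\pi})\ge |B|(K_A/n-\delta/8)$, which is $\ge c\delta^2 n$ once $K_A\ge \delta n/4$, say. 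As written, however, the $\delta$ close to $1/2$ case is asserted rather than proved.
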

\noindent
\emph{Proof:}
Let $\mathcal{W}_A,\mathcal{W}_B$ be the collection of walkers occupying $A$ and $B$, respectively, at time 0 (as indicated in the statement of the lemma, at each vertex there is exactly one walker at time 0). Let $D_{t}$ be the set of vertices which are occupied at time $t$ by at least one walker in $\mathcal{W}_B$. Using Lemma \ref{lem: 1-e-1} below and the fact that $\Pr[u \in D_t ]=P^t(u,B) \le 1 $ for all $u$, it is easy to verify that  $\mathbb{E}[|D_t|] \ge |B|(1-e^{-1})$ for every $t$. 
Thus by applying Azuma inequality to an appropriate Doob's sequence (by exposing the positions at time $t$ of the walkers in $\mathcal{W}_B$ sequentially, one at a time), we get that 
\begin{equation}
\label{eq: Dtlarge}
\Pr[|D_t|  < \delta n/4] \le e^{-c_1 \delta n }
\end{equation}
(the absolute values of the increments of this martingale are bounded by 1). Denote the uniform distribution on $A$ by $\pi_A$ and by $\Pr_{\pi_{A}}^r$ the distribution at time $r$ of LSRW with $X_0 \sim \pi_A$. If some walker from $\W_A$ is at $D_r$ at time $r$ then this walker meets a walker from $\mathcal{W}_B$ at time $r$. Given that $D_r=D$, the conditional probability that no walker from $\mathcal{W}_A$ is at $D_r$ at time $r$  is at most
\[\prod_{a \in A}(1-P^{r}(a,D)) \le \exp [-\sum_{a \in A}P^{r}(a,D) ]=\exp [-|A|\Pr_{\pi_{A}}^r (D)
].\] To conclude the proof, we show that $\Pr_{\pi_{A}}^r (D) \ge \delta/8 $ for all $D \subseteq V$ such that $|D| \ge \delta n /4$, which implies the assertion of the Lemma using \eqref{eq: Dtlarge}. 

Let  $\pi$ be the uniform distribution on $V$. Recall that for every $p \ge 1$ the $\ell_p$ distance between a pair $\mu,\nu$ of distributions on $V$ is defined as $\|\mu-\nu \|_{p,\pi}:=[\sum_{v \in V}\pi(v)(\frac{\mu(v)-\nu(v)}{\pi(v)})^p]^{1/p} $ and $\|\mu-\nu \|_{\infty,\pi}:=\max_{v \in V}|\mu(v)-\nu(v)| $.
Then  \[\|\pi_{A}-\pi \|_{2,\pi} \le \|\pi_{A}-\pi \|_{\infty,\pi}\le n/|A|. \] By Jensen's inequality (first inequality below) and  the  Poincar\'e inequality (second inequality below) satisfied by the spectral gap (cf.~\cite[Lemma 3.26]{aldous}) \[\|\Pr_{\pi_{A}}^r -
\pi \|_{1,\pi}  \le \|\Pr_{\pi_{A}}^r -
\pi \|_{2,\pi} \le (1-\lambda)^r \|\pi_{A}-\pi \|_{2,\pi} \le \sfrac{n}{|A|}e^{-\lambda
r} \le \delta/4 ,\] where the last inequality follows from the choice of $r$. As $\|\Pr_{\pi_{A}}^r -
\pi \|_{1,\pi}= \sum_v |\Pr_{\pi_{A}}^r
(v)-\pi(v)|=2 \max_{D \subset V} \pi(D)- \Pr_{\pi_A}^r(D)  $),  it follows that if $|D| \ge \delta n/4$ then \[\Pr_{\pi_A}^r(D) \ge \pi(D)-\sfrac{1}{2}  \|\Pr_{\pi_{A}}^r -
\pi \|_{1,\pi}= (|D|/n) -\delta/8\ge \delta /8 . \quad \text{\qed} \]
\begin{lemma}
\label{lem: 1-e-1}
For all $x \in [0,1]$, $(1-e^{-x}) \ge (1-e^{-1})x$. In particular, for all $x \in [0,1]$,  if $Y \sim \mathrm{Pois}(x)$ or $Y=\sum_{i=1}^{m}\xi_i$, where $\xi_1,\ldots,\xi_m$ are independent Bernoulli r.v.'s with $\sum_{i=1}^{m}\mathbb{E}[\xi_i]=x$, then (by Lemma \ref{lem: independentBer}) $\Pr[Y \ge 1] \ge 1-e^{-\mathbb{E}[Y]} \ge (1-e^{-1})\mathbb{E}[Y]$. 
\end{lemma}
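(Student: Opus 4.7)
The plan is to prove the scalar inequality $1-e^{-x}\ge (1-e^{-1})x$ on $[0,1]$ by a one-line concavity argument, and then derive the probabilistic statement directly from it.

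For the scalar inequality, define $f(x):=1-e^{-x}-(1-e^{-1})x$ on $[0,1]$. I would first observe that $f(0)=0$ and $f(1)=(1-e^{-1})-(1-e^{-1})=0$, so $f$ vanishes at both endpoints. Since $f''(x)=-e^{-x}<0$ on $[0,1]$, the function $f$ is strictly concave. A concave function on $[0,1]$ that vanishes at both endpoints is non-negative throughout, so $f(x)\ge 0$, i.e.\ $1-e^{-x}\ge (1-e^{-1})x$ for all $x\in[0,1]$, as claimed. (Alternatively, one could Taylor-expand $g(x):=(1-e^{-x})/x$ and check it is decreasing on $(0,1]$, whence $g(x)\ge g(1)=1-e^{-1}$; the concavity argument is shorter.)

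For the probabilistic consequence, consider first $Y\sim\mathrm{Pois}(x)$ with $x\in[0,1]$. Then $\Pr[Y\ge 1]=1-\Pr[Y=0]=1-e^{-x}$, so the scalar inequality gives $\Pr[Y\ge 1]\ge (1-e^{-1})x=(1-e^{-1})\mathbb{E}[Y]$. For the Bernoulli sum case with $x=\sum_i \mathbb{E}[\xi_i]=\mathbb{E}[Y]\in[0,1]$, Lemma~\ref{lem: independentBer} yields
\[
\Pr[Y=0]=\prod_{i=1}^m(1-p_i)\le\exp\Bigl(-\sum_{i=1}^m p_i\Bigr)=e^{-\mathbb{E}[Y]},
\]
so $\Pr[Y\ge 1]\ge 1-e^{-\mathbb{E}[Y]}$, and applying the scalar inequality with $x=\mathbb{E}[Y]$ gives $\Pr[Y\ge 1]\ge(1-e^{-1})\mathbb{E}[Y]$.

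There is no real obstacle here: the proof is two lines of elementary calculus together with a direct substitution. The only thing to double-check is that the hypothesis $\mathbb{E}[Y]=\sum p_i\le 1$ is actually used so that the scalar inequality applies; this is exactly the standing assumption $x\in[0,1]$ in the statement.
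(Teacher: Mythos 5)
Your proof is correct and follows the same route the paper intends: the scalar inequality $1-e^{-x}\ge(1-e^{-1})x$ on $[0,1]$ is an elementary concavity/endpoint check (the paper leaves it implicit), and the probabilistic consequences follow exactly as you derive them, with equality $\Pr[Y=0]=e^{-x}$ in the Poisson case and Lemma \ref{lem: independentBer} giving $\Pr[Y=0]\le e^{-\mathbb{E}[Y]}$ in the Bernoulli case. Nothing is missing.
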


\section{Concluding remarks, conjectures and open problems}
\label{s: open}

\subsection{Allowing different communities of walkers}

It is not hard to see that the proof of \eqref{eq: upper} can also be extended to the following setup in which there are $m$ ``communities" of walkers. Let $m \le n $. Let  $k_1,\ldots,k_m > 0 $ be such that $\sum_{i=1}^mk_i=n$. Let $G_1,\ldots,G_m $ be connected graphs on the same vertex set $V$ of size $n$. Denote the stationary distribution of LSRW on $G_i$ by $\pi^i$. At time 0 there are $\mathrm{Pois}(k_i \pi_v^i )$ walkers at vertex $v$ performing independent LSRWs on $G_i$ (for all $1 \le i \le m$ and $v \in V $, independently). As usual, when two walkers reach the same vertex at the same time they become acquainted, even if they walk on different graphs. The bound on $\SC$  depends in this case on $\max_{ i \le m}d^{(i)}$, where $d^{(i)}$ is the average degree of $G_i$.  
\subsection{Dependence on the maximal degree and the vertex-transitive setup}
\begin{definition}
\label{def: VT}
We say that a bijection $\phi:V \to V$ is an automorphism
of a graph $G=(V,E)$ if $\{u , v \} \in E $ iff $\{ \phi(u) , \phi(v)\} \in E$. A graph $G$ is said to be \emph{vertex-transitive} if the action of its automorphisms group  $\mathrm{Aut}(G)$   on its vertices is transitive (i.e.~$\{\phi(v):\phi \in \mathrm{Aut}(G) \}=V $ for all $v$).
\end{definition}
\begin{conjecture}
\label{con: cycle}
There exists some $C>0$ such that for every connected $n$-vertex graph $G$ of average degree $d$, we have   $\sfrac{ \SC(G)}{( \log n)^2} \le C(d^21_{G \text{ is non-regular}}+d  1_{G \text{ is regular but not vertex-transitive}}+1)$ w.p.\ at least $1-C/n$.
\end{conjecture}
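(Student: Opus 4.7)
The plan is to sharpen the round-based framework of Theorem~\ref{thm: 1} so that the product of round length and number of rounds matches the conjectured bound. Recall that the bound $Cd^{1+5\xi}(\log n)^3$ (with $\xi:=1_{G\text{ is not regular}}$) factors as $C'\,t_{*}(G)\,d^{1+3\xi}\,\log n$, where $t_{*}(G)\le \bar{C}d^{2\xi}(\log n)^{2}$ is the round length (and is always $\le C(\log n)^{2}$ in the regular case by Fact~\ref{fact: decay}), while the factor $d^{1+3\xi}\log n$ comes from the number of rounds: each round reduces the number of classes by $1-\Theta(\alpha p)$ with $\alpha p=\Theta(d^{-1-\xi})$. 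The conjecture asks to eliminate both the $d^{1+3\xi}$ weakness in the per-round merging rate and the extraneous $\log n$ number-of-rounds factor, yielding $C(\log n)^{2}$ for vertex-transitive graphs, $Cd(\log n)^{2}$ in the regular but non-VT case, and $Cd^{2}(\log n)^{2}$ in the non-regular case.

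First I would attack the $d$-dependence of the merging rate. The bottleneck factor $c/d$ in Proposition~\ref{prop: main}(i) arises because only a single boundary vertex $u$ of the set $D_{i}$ is exploited. In the vertex-transitive setting every automorphism-invariant class occupies a union of orbits, which by vertex-transitive isoperimetry admits $\Omega(d)$ independent boundary edges once $|\mathcal{A}_{i}|$ is $\Omega(n)$; summing the negatively-correlated collision indicators along all such boundary edges --- precisely as in the edge-expanding case of Proposition~\ref{prop: main}(ii), via the one-sided Chebyshev inequality \eqref{eq: onesidedChebyshev} --- should yield an $\alpha$ independent of $d$. For non-regular graphs the same accounting, being careful to distinguish the $d_{v}/d$ factor that appears in the balance condition from the one entering Fact~\ref{fact: decay}, should give $\alpha=\Omega(d^{-2})$ rather than $\Omega(d^{-6})$. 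Together with the existing round-length bound, this alone already gives $O(d^{2}(\log n)^{3})$ in the non-regular case and $O((\log n)^{3})$ in the VT case.

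Second, and more delicately, I would try to remove the extraneous $\log n$ from the number of rounds. The natural strategy is to prove a \emph{one-round giant-class emergence} statement in the spirit of Theorem~\ref{thm: et2}: after a single period of length $O(t_{*}(G))$ there is already a class of size $\Omega(n)$ with high probability. One would then appeal to Corollary~\ref{cor: prey}-type hitting estimates to argue that the giant class absorbs every remaining walker in time also $O(t_{*}(G))$, since the walker density within the giant class equilibrates to $\Omega(1)$ on average. For VT graphs the emergence ought to follow from a second-moment argument on the acquaintance graph: by transitivity the pair collision probability $m(v,v')$ depends essentially only on the orbit of $(v,v')$, and once $t\gtrsim t_{*}(G)$ the expected number of pair meetings is already $\Omega(n)$, which via a Paley--Zygmund/percolation argument should force a giant component.

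The principal obstacle lies in this giant-class emergence step for general (non-expander, non-cycle) VT graphs. In the cycle case Theorem~\ref{thm: cycle} uses the non-crossing property of continuous-time one-dimensional walks, a device with no higher-dimensional analogue; and Theorem~\ref{thm: et2} relies on the sharp mixing estimate \eqref{eq: mix} which degrades to the much weaker $M/\sqrt{t}$ rate of Fact~\ref{fact: decay} for general regular graphs. Turning a constant per-round merging rate into an end-of-round giant class on every VT graph --- for instance on the $d$-dimensional torus, where both the cycle argument and the expander argument break down in their strong forms --- would likely require a new multi-scale or renormalisation argument coupling the coalescence on $G$ to coalescence on a suitable quotient, and this is where I expect the proof of Conjecture~\ref{con: cycle} to be hardest.
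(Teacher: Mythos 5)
The statement you are addressing is Conjecture~\ref{con: cycle}: the paper states it as an open conjecture and offers no proof, so there is nothing in the text to compare your argument against. What you have written is a research programme rather than a proof, and you essentially concede this yourself when you identify the giant-class emergence step as the ``principal obstacle'' requiring ``a new multi-scale or renormalisation argument.'' A proposal that ends by naming the missing idea has not supplied it.

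Beyond that self-acknowledged gap, two of your intermediate steps are also not established. First, your improvement of the merging rate for vertex-transitive graphs rests on the claim that transitivity forces $\Omega(d)$ usable boundary edges for the sets $D_i$ of Proposition~\ref{prop: main}. But $D_i$ is an arbitrary subset of $V$ determined by an adversarial vertex-respecting partition of the walkers, not an automorphism-invariant set (and on a vertex-transitive graph the vertex orbits are trivial, so ``union of orbits'' gives no information); vertex-transitivity alone does not yield the edge-expansion hypothesis $|\partial_E A|\ge c_0 d$ of Proposition~\ref{prop: main}(ii) with an absolute $c_0$, which is exactly the extra assumption the paper needs to remove the factor $d$. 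You would need to prove a genuine isoperimetric input here, and you would also need to explain why the regular-but-not-vertex-transitive case still retains a single factor of $d$ rather than inheriting whatever worse bound your argument gives. Second, even granting a giant class after one round, the absorption step (``the giant class absorbs every remaining walker in time $O(t_*(G))$'') imports Corollary~\ref{cor: prey}, whose proof uses the expander mixing bound \eqref{eq: mix}; for a general vertex-transitive graph the hitting time of the (random, time-dependent) support of the giant class by a stray walker is not controlled by $t_*(G)$ without further argument, and circumventing this is precisely why the paper's own round-based scheme pays the extra $\log n$ you are trying to delete. In short: the reduction you describe is a reasonable way to think about the conjecture, but each of its three stages is itself open.
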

\begin{conjecture}[Monotonicity in the number of walkers]
\label{con: monotone}
Denote by $\SC(G,k)$ the social connectivity time when we have $k$ walkers, each starting at a vertex chosen from the stationary distribution $\pi$ independently of the rest of the walkers. Then for every vertex-transitive graph $G$ we have that $\mathbb{E}[\SC(G,k)]$ is monotonically
non-increasing in $k$ for $k \ge 2$.
\end{conjecture}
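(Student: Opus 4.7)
The plan is to exploit the exchangeability provided by vertex-transitivity. Since $\mathrm{Aut}(G)$ acts transitively, the stationary distribution $\pi$ is uniform on $V$, so the $k+1$ walkers have i.i.d.\ uniform initial positions and perform independent LSRWs, and their joint law is invariant under relabeling. Enlarge the probability space to realise a $(k+1)$-walker system $(\omega_1, \ldots, \omega_{k+1})$, and for each $i$ let $T^{(i)}$ denote the SC time of the $k$-walker system obtained by deleting $\omega_i$. By exchangeability $T^{(i)} \deq \SC(G,k)$ for every $i$, so
\[
(k+1)\,\mathbb{E}[\SC(G,k)] \;=\; \mathbb{E}\Bigl[\,\textstyle\sum_{i=1}^{k+1} T^{(i)}\,\Bigr],
\]
and the conjecture reduces to establishing
\[
\mathbb{E}\Bigl[\,\textstyle\sum_{i=1}^{k+1} T^{(i)}\,\Bigr] \;\ge\; (k+1)\,\mathbb{E}[T_{k+1}], \qquad T_{k+1}:=\SC(G,k+1).
\]

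A first attempt is to seek the pointwise inequality $\sum_i T^{(i)} \ge (k+1) T_{k+1}$. At time $T_{k+1}$ the graph $\mathrm{AG}_{T_{k+1}}$ is connected, so $T^{(i)} \le T_{k+1}$ precisely when $w_i$ is \emph{not} a cut vertex of $\mathrm{AG}_{T_{k+1}}$ and $T^{(i)} > T_{k+1}$ when it is; standard graph theory guarantees at least two non-cut vertices. However, this pointwise bound already fails for $k+1 = 3$: pairwise meeting times $M_{12}=M_{23}=10$, $M_{13}=5$ yield $T_3 = 10$, $T^{(1)}=T^{(3)}=10$, $T^{(2)}=5$, so $\sum T^{(i)} = 25 < 30 = 3 T_3$. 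Any successful argument must therefore use the joint distribution of meeting times, not merely the topology of $\mathrm{AG}_{T_{k+1}}$.

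A more refined route is to analyse the \emph{merger history} of $(\mathrm{AG}_t)_{t \ge 0}$: the times $0 \le s_1 < s_2 < \cdots < s_N = T_{k+1}$ at which the number of connected components strictly drops, each witnessed by a meeting of two walkers. For a cut walker $i$, the excess $T^{(i)} - T_{k+1}$ is precisely the additional time needed for the components of $\mathrm{AG}_{T_{k+1}}\setminus\{w_i\}$ to merge through an alternate path not using $w_i$. The plan is to bound $\mathbb{E}[(T^{(i)}-T_{k+1})_+]$ in terms of how recently and uniquely $w_i$ contributed to the mergers, and to combine these bounds using exchangeability of $i \in [k+1]$ together with the uniform law on trajectories. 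The restriction $k \ge 2$ enters because with only two walkers after deletion there is no alternate merging path to exploit. A natural technical tool is an FKG/Harris-type positive correlation inequality for meeting indicators on vertex-transitive graphs, together with a second-moment comparison in the spirit of Proposition \ref{prop: mainlower}.

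The main obstacle is quantitative control over the post-$T_{k+1}$ behaviour. Conditional on $w_i$ being a cut vertex at time $T_{k+1}$, the positions and joint law of the surviving walkers are highly non-stationary, and the residual meeting times must be compared to the unconditional meeting rate of pairs of independent LSRWs. Vertex-transitivity provides symmetry between all pairs of starting configurations modulo $\mathrm{Aut}(G)$, but it does not by itself supply the uniform hitting estimates needed. Overcoming this seems to require a structural lemma of the form ``for any two vertex-respecting partitions of the walkers into at least two classes, the conditional probability that the next class merger happens by time $t$ exceeds $f(t,G)$, uniformly in the history'' --- an in-expectation analogue of the marginally $(\alpha,t)$-merging property of Section \ref{s: upper}, and this appears to be the crux of the problem.
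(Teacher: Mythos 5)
This statement is Conjecture \ref{con: monotone} of the paper and is posed as an open problem; the paper offers no proof, so there is nothing to compare your argument against, and the only question is whether your proposal closes the conjecture. It does not. The exchangeability step is correct and is the natural first move: with i.i.d.\ $\pi$-distributed starts the $k+1$ walkers are exchangeable, each deleted $k$-walker system has the law of $\SC(G,k)$, and hence $(k+1)\,\mathbb{E}[\SC(G,k)] = \mathbb{E}[\sum_{i=1}^{k+1} T^{(i)}]$. Your counterexample showing that the pointwise inequality $\sum_i T^{(i)} \ge (k+1)T_{k+1}$ fails is also correct, and the characterization of $\{T^{(i)} \le T_{k+1}\}$ via cut vertices of $\mathrm{AG}_{T_{k+1}}$ is sound (connectivity of the deleted acquaintance graph is monotone in $t$). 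But everything after the reduction is a plan, not an argument, and the reduction itself replaces the conjecture by an inequality that is at least as hard.

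Concretely: (a) you propose to lower-bound the excesses $\mathbb{E}[(T^{(i)}-T_{k+1})_+]$, but the target $\mathbb{E}[\sum_i (T^{(i)}-T_{k+1})]\ge 0$ equally requires an upper bound on the deficits $\mathbb{E}[(T_{k+1}-T^{(i)})_+]$, which can be of order $T_{k+1}$ itself (in your own example $T^{(2)}=T_3/2$); you never address this side. (b) The ``structural lemma'' you invoke --- a merger-rate lower bound uniform over all vertex-respecting partitions \emph{and} over the conditional law of the trajectories given the history --- is essentially a restatement of the difficulty. The paper's marginally $(\alpha,t)$-merging machinery (Proposition \ref{prop: main}) yields such bounds only for $(\delta,t)$-balanced configurations, which hold with high probability under the stationary occupation law; conditioning on the rare event that a specific walker is a cut vertex of $\mathrm{AG}_{T_{k+1}}$ destroys exactly the stationarity that makes those estimates available, as you yourself note. (c) The FKG/Harris-type positive association of meeting indicators that you want to use is not established anywhere (the trajectories are independent, but meeting events are neither increasing nor decreasing functions of any natural product structure), and Proposition \ref{prop: mainlower} computes covariances of isolation events via Poisson thinning rather than supplying a general correlation inequality. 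So the proposal records a sensible first step and an instructive negative observation, but the conjecture remains open.
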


\begin{open}
\label{open: vt}
Let $G=(V,E)$ be a finite connected vertex-transitive graph. Let $o \in V$. Consider the SN model on $G$. Consider
\begin{itemize}
\item
$t(G)$ -  the minimal integer satisfying $t(G) \ge  \log |V|  \sum_{i=0}^{t(G)}P^i(o,o)$, \item
$\tau_1$ -  the minimal time in which all classes are of size at least 2, 
\item
and $\tau_2$ -  the minimal time by which every vertex has been visited by at least one walker.
\end{itemize}
Is there an absolute constant $C$ such that $$\max \{\mathbb{E}[\SC(G)],\mathbb{E}[\tau_1],\mathbb{E}[\tau_2],t(G) \} \le C \min \{\mathbb{E}[\SC(G)],\mathbb{E}[\tau_1],\mathbb{E}[\tau_2] ,t(G)\} \, \text{?}$$ 
\end{open}
\begin{conjecture}
\label{con: concentration}
Consider a sequence $G_n$ of vertex-transitive graphs of increasing sizes. Then in the notation of Open problem \ref{open: vt},
\[\SC(G_n)/\mathbb{E}[\SC(G_n)] \to 1 \quad \text{and} \quad  \tau_i(G_n)/\mathbb{E}[\tau_i(G_n)] \to 1\,\ (i=1,2) \quad \text{in distribution, as }n \to \infty. \]
\end{conjecture}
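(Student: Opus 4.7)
\textbf{Proposal for proving Conjecture \ref{con: concentration}.} The plan is to handle the three quantities $\tau_2$, $\tau_1$, and $\SC(G_n)$ separately, in that order, leveraging three structural consequences of vertex-transitivity: the stationary distribution $\pi$ is uniform on $V_n$; the joint law of the walker configuration at any time is invariant under the diagonal action of $\mathrm{Aut}(G_n)$; and marginal probabilities of local events such as ``vertex $v$ unvisited by time $t$" depend only on $t$ (not $v$). Throughout I write $n := |V_n|$ and suppress the index $n$ when unambiguous.

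First I would attack $\tau_2$. By vertex-transitivity the probability $u(t)$ that a fixed vertex $v$ is unvisited by any walker by time $t$ does not depend on $v$, so $\mathbb{E}[U(t)] = n\,u(t)$, where $U(t)$ is the number of unvisited vertices. A second-moment computation — using that $\mathrm{Cov}(\mathbf{1}_{v \text{ unvisited}}, \mathbf{1}_{w \text{ unvisited}})$ depends only on the $\mathrm{Aut}(G_n)$-orbit of the pair $(v,w)$ and decays when the meeting/collision probability between the (few) walkers contributing to both events is small — should yield $\mathrm{Var}(U(t)) \le (1+o(1))\mathbb{E}[U(t)]$ in the critical regime $u(t) = \Theta(1/n)$. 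Since $u(t)$ varies smoothly in $t$ at scales much smaller than $\mathbb{E}[\tau_2]$ (one can control $u(t+s)/u(t)$ via the operator-norm gap of $P$), Chebyshev then gives concentration of $\tau_2 = \inf\{t : U(t) = 0\}$ around the deterministic time $t^\star := \inf\{t: n\,u(t) \le 1\}$. The same blueprint handles $\tau_1$: replace ``vertex $v$ unvisited" by ``walker $w$ is still isolated," use Fact \ref{fact: thinning} so that the Poisson-thinned walker population is exchangeable, and verify that the pairwise covariance of two isolation events decays in the graph distance between the walkers' starting points.

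Third, and most interestingly, comes $\SC(G_n)$ itself. I would first show (adapting the merging estimates of Section \ref{s: upper}) that at some deterministic time $t_{\mathrm{giant}} = o(\mathbb{E}[\SC])$ there is $\whp$ a class containing at least $(1-o(1))n$ walkers; call this the giant class $\mathcal{G}$. For each remaining walker $w \notin \mathcal{G}$, let $H_w$ be the time past $t_{\mathrm{giant}}$ until $w$ first meets a member of $\mathcal{G}$. By vertex-transitivity and the estimates from Section \ref{s: expanders}/Section \ref{s: upper} adapted to bound the meeting rate with $\mathcal{G}$, the events $\{H_w > t\}$ should satisfy a Chen–Stein-type Poisson approximation: the number $N(t)$ of walkers still not absorbed by $\mathcal{G}$ at time $t_{\mathrm{giant}} + t$ is approximately $\mathrm{Pois}(\lambda(t))$ for some rapidly-decreasing $\lambda(t)$. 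Concentration of $\SC = t_{\mathrm{giant}} + \inf\{t: N(t) = 0\}$ then follows just as for $\tau_2$.

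The main obstacle is the last paragraph: showing that the tail contribution of walkers which are atypically slow to meet the giant class is negligible, and making the Poisson/Chen–Stein step rigorous for a dependent collection of hitting events. Vertex-transitivity is essential here because it prevents any ``slow" region of the graph from existing — the hitting-time distribution of $\mathcal{G}$ starting from $\pi$ should behave, up to constants, like a geometric with mean $\lesssim \mathbb{E}[\SC]$ uniformly in the (random) shape of $\mathcal{G}$. Making this uniform-in-$\mathcal{G}$ estimate quantitative, together with decoupling the $H_w$'s across different $w$, seems to be the key step; one natural tool is a relative-entropy / isoperimetric comparison of $\mathcal{G}$'s empirical distribution with $\pi$, available for transitive graphs via the symmetry of the Dirichlet form.
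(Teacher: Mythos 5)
The statement you are addressing is Conjecture \ref{con: concentration}; the paper offers no proof of it, and your submission is likewise not a proof but a research plan whose load-bearing steps are exactly the ones left as ``should yield'' / ``should satisfy''. Beyond that, at least two of your proposed mechanisms demonstrably break on the $n$-cycle, which is vertex-transitive and hence within the scope of the conjecture. First, your route to concentration of $\tau_2$ needs the transition window of $u(t)$ (the probability a fixed vertex is unvisited by time $t$) to be $o(\mathbb{E}[\tau_2])$, and you propose to control $u(t+s)/u(t)$ ``via the operator-norm gap of $P$''. On $\mathrm{C}_n$ the spectral gap is $\Theta(n^{-2})$ while $\mathbb{E}[\tau_2]=\Theta((\log n)^2)$ (cf.\ Remark \ref{rem: Poistrick} and \eqref{e:reglower2}), so any smoothness estimate mediated by the spectral gap is vacuous at the relevant time scales; the decay of $u(t)$ there is governed by local collision counts, not by mixing. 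Relatedly, the claimed bound $\mathrm{Var}(U(t))\le (1+o(1))\mathbb{E}[U(t)]$ is false in general: on the cycle the unvisited set is a union of intervals, so adjacent vertices' unvisited indicators are positively correlated with covariance of order $u(t)$, and the variance exceeds the mean by a constant factor. (Concentration of $\tau_2$ may still hold, but not by the argument you give.)

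Second, your treatment of $\SC$ presupposes that at some time $t_{\mathrm{giant}}=o(\mathbb{E}[\SC])$ a class of size $(1-o(1))n$ exists and that $\SC$ is then the absorption time of a Poisson-like number of stragglers. The paper only establishes a giant class (of size $\ge n/6$, not $(1-o(1))n$) for expanders (Theorem \ref{thm: et2}), and the authors explicitly caution, immediately after the conjecture, that the picture $[\SC(G_n)-\tau_1(G_n)]/\mathbb{E}[\SC(G_n)]\to 0$ ``might not be the case for the $n$-cycle''. Indeed on $\mathrm{C}_n$ the obstruction to connectivity is the existence of two uncrossed edges (\S~\ref{s: cycle}), a spatial extreme-value structure with no absorbing giant class, so the Chen--Stein absorption framework does not apply there. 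To make progress you would need either to restrict the conjecture (e.g.\ to transitive expanders, where your third paragraph is plausible and where Theorems \ref{thm: et1}--\ref{thm: et2} give you the required inputs), or to supply a genuinely different argument for slowly-mixing transitive graphs; as written, the proposal does not close the conjecture.
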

We believe that in the setup of the previous conjecture it is often the case that $[\SC(G_n)-\tau_1(G_n)]/\mathbb{E}[\SC(G_n)] \to 0 $ in distribution (e.g.~we believe this is the case for transitive expanders). However, this might not be the case for the $n$-cycle. It is plausible that Conjecture \ref{con: concentration} holds even without the transitivity assumption as long as the graphs $G_n$ are of bounded degree. Conversely, if $G_n$ is two disjoint $n$-cliques connected by a single edge, then $\frac{\SC(G_n)}{\mathbb{E}[\SC(G_n)]}$ converges in distribution to the Exponential distribution with parameter 1. 

In the last example we have that $\mathbb{E}[\SC(G_n)]=\Theta (n) $. This shows $\mathbb{E}[\SC(G_n)]$ may increase with the maximal degree. Below are two families of graphs, generalizing the previous example, which we believe to be extremal.  
\begin{example}
\label{rem: extermal}
In general, $\mathbb{E}[\SC(G)]$ may  grow linearly in $d$, even when $G$ is regular, as the following example demonstrates. Consider two disjoint $n$-cliques. Delete an edge from each clique and  connected the two cliques by two edges so that the obtained graph is  $(n-1)$-regular. By considering the time required until some walker visits both  cliques   it is not hard to show that $\mathbb{E}[\SC] \ge c n $. 
We now describe a similar construction of $d$-regular graphs which satisfy $\mathbb{E}[\SC] \ge c d \log^{2} (n/d)$.

 Fix some $2 \le d ,n$ such that $2d \le n$. Let $J_k$ be a graph obtained from the complete graph on $k$ vertices by deleting a single edge. Consider $\lceil n/d \rceil $ disjoint copies of $J_{d}$: $I_0,\ldots,I_{\lceil n/d\rceil-1 }$, where  for all $0 \le j < \lceil n/d \rceil$, $I_j$ is connected to $I_{j+1}$ ($j+1$ is defined $\mathrm{mod}\lceil n/d \rceil $) by a single edge that connects two degree $d-1$ vertices. This can be done so that the obtained graph is $d$-regular. It is not hard to verify that the obtained graph satisfies $\mathbb{E}[\SC] \ge cd \log ^{2}(n/d)=:s_{d,n}=s$ (in fact, if $d \ll n$ we have that $\whp$  $\SC \ge s $). To see this, observe that similarly to the analysis of the $n$-cycle it suffices to show that provided that $c$ from the definition of $s$ is taken to be sufficiently small, with probability bounded from below (in fact, $\whp$ when $d \ll n$) there will be multiples $j$'s such that no walker crossed the edge connecting $I_j$ with $I_{j+1}$ by time $s$. To prove this, one can imitate the analysis from Remark \ref{rem: Poistrick}. The claimed bound becomes intuitive when one considers the fact that the projection of the walk to the index of the copy of $J_d$ to which it belongs, behaves similarly to a random walk on the $\lceil n/d\rceil$ cycle with holding probability $1-\sfrac{1}{d^2}$. Thus the expected number of walkers to cross an edge connecting two copies  of $J_d$ by time $s$ is comparable to $d$ times the expected number of walkers which cross a certain edge by time $s/d$ for the $n$-cycle (one has to multiply by $d$ as $J_d$ has $d$ vertices).   
\end{example}
 We now consider a non-regular family of examples. 
\begin{example}
\label{ex:nonregular}
Consider two disjoint $n$-cliques connected by a path of length $n$. Using the fact that the number of walkers which initially occupy the $n$-path has a Poisson distribution with a constant mean, it is not hard to show that $\mathbb{E}[\SC] \ge c n^2 $. 

We now describe a similar construction whose maximal degree is $d \le 8|V| $ which satisfy $\mathbb{E}[\SC] \ge c [d \log (|V|/d)]^{2} $. 
  Take $\lceil n/d \rceil $ copies of $J_{d}$: $I_0,\ldots,I_{\lceil n/\ell\rceil -1} $ and for all $0 \le k < \lceil n/d\rceil -1 $  connect $I_k$ to $I_{k+1}$ via a path of length $d$.   

To see that  $\mathbb{E}[\SC] \ge c [d \log (n/d)]^2=t_{d,n}$, observe that with positive probability (in fact, $\mathrm{w.h.p.}$ if $d \ll n$) there will be many vertices lying in the middle of a length $d$ path (connecting  some $I_k$ to $I_{k+1}$) which will not be visited by a single walker  up to time $t_{d,n}$. We leave the details to the reader (hint: use the idea from Remark \ref{rem: Poistrick}).  
\end{example}
\subsection{Additional Open Problems}

The following conjecture concerns the existence of infinite classes of walkers after a constant number of steps in the setup of infinite graphs.
\begin{conjecture}
Let $G$ be a connected infinite bounded degree graph such that the critical density for independent
percolation on $G$ is strictly less than 1. If  the number of walkers at time 0 at each vertex $v$ has a   $ \mathrm{Pois}(\lambda d_v)$ distribution,
independently for different $v$'s (let $\mathbb{P}_{\lambda}$ be the corresponding probability measure), then for every $\lambda>0$ there exists  $ t_c(\lambda,G)>0$ such that for all $t>t_c(\lambda,G)$, $$\mathbb{P_{\lambda}}[\text{there exists an
infinite class of walkers at time }t]=1.$$
\end{conjecture}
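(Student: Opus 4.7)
The strategy is to construct, for $t$ sufficiently large, a $K$-dependent family of local events indexed by vertices of $G$ whose marginal probabilities are so close to $1$ that the Liggett--Schonmann--Stacey stochastic domination theorem forces this family to dominate supercritical Bernoulli site percolation on $G$. Since $p_c(G)<1$, such a supercritical percolation has an infinite cluster almost surely, and this infinite cluster of local events will yield an infinite acquaintance class.

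Concretely, fix a radius $L$ to be chosen large, and for each $v\in V$ define the local event $X_v^{(t,L)}$: among the walkers whose initial positions lie in $B(v,L-1)$, at least one representative from each populated initial position belongs to a common acquaintance class at time $t$. Because a walker moves at most $t$ steps in time $t$, $X_v^{(t,L)}$ is measurable with respect to the initial configuration and walks inside $B(v,L+t)$, so the family $(X_v^{(t,L)})_{v\in V}$ is $(2(L+t)+1)$-dependent. The key structural observation is a matching property: if $u,v\in V$ satisfy $\mathrm{dist}_G(u,v)\le 2(L-1)$ and $B(u,L-1)\cap B(v,L-1)$ contains some populated vertex $w$, then the witnessing classes $C_u,C_v$ for $X_u^{(t,L)}=X_v^{(t,L)}=1$ both contain a walker initially at $w$, forcing $C_u=C_v$. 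In a connected bounded-degree graph one has $|B(w,L/2)|\ge L/2$, so a union bound shows that with probability at least $1-e^{-c(\lambda)L}$ every relevant intersection is populated, and I would fold this local condition into the definition of $X_v$ without breaking its bounded range of dependence.

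The main obstacle is the uniform marginal bound $\inf_v\mathbb{P}[X_v^{(t,L)}=1]\to 1$ as $t\to\infty$ (with $L$ chosen large beforehand). My approach is to view the walkers initially in $B(v,L+t)$ as a finite-volume SN-type system on the induced subgraph $G[B(v,L+t)]$, coupled to the full-graph walks via reflection or stopping at $\partial B(v,L+2t)$ so that all relevant histories agree on the ball. For $L$ large the restricted initial configuration is close in distribution to a Poisson system of intensity $\lambda d_x$, and I would adapt the giant-class emergence of Theorem~\ref{thm: et2} together with the coalescence/merging estimate of Proposition~\ref{prop: main} to this finite ball, concluding that in time polynomial in $L$ a single giant class absorbs a walker from every populated vertex in $B(v,L-1)$. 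Uniformity in $v$ is delicate because $G$ may be highly non-homogeneous; it must be obtained from bounded-degree control over local volume growth, degree sums, and return probabilities, together with a careful treatment of walkers that would otherwise escape $B(v,L+t)$.

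Once the marginal bound is established, fix $p^{\star}\in(p_c(G),1)$ and apply Liggett--Schonmann--Stacey: for $(2(L+t)+1)$-dependent site indicators on the bounded-degree graph $G$ with marginals sufficiently close to $1$, the process stochastically dominates Bernoulli site percolation of parameter $p^{\star}$, which almost surely contains an infinite connected cluster. Chaining the matching property along this infinite cluster produces a single acquaintance class containing infinitely many walkers at time $t$, proving the conjecture. The hard part will be the uniform marginal bound: the finite-graph coalescence theory of this paper operates cleanly only under regularity or spectral hypotheses, so adapting it to an arbitrary induced ball of a bounded-degree graph will require controlling boundary effects and leveraging $p_c(G)<1$ (for instance via the positive density of ``escape routes'' implied by supercritical percolation) to supply enough local mixing.
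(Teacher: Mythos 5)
First, a point of order: the statement you are proving is stated in the paper as an open \emph{conjecture} (in \S 7.3); the paper contains no proof of it, so there is nothing to compare your argument to except the conjecture itself and the surrounding discussion. Your renormalization-plus-LSS skeleton is a natural first attack, but as written it does not constitute a proof, and the central step is not merely ``delicate'' --- it fails.

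The fatal problem is the choice of local event. You require that \emph{every} populated vertex of $B(v,L-1)$ contribute a representative to a single acquaintance class by time $t$, and you need $\inf_v\mathbb{P}[X_v^{(t,L)}=1]\to 1$. Take $G$ to be the $3$-regular tree and $\lambda$ small. By the results of \cite{Hermon} quoted in \S 1.5, below $\lac(\mathcal{T}_3)>0$ the acquaintance cluster of a fixed walker is a.s.\ finite even at $t=\infty$, so the probability that two walkers started at distance of order $L$ lie in a common class is bounded away from $1$ uniformly in $t$ (and in fact small for large $L$). Hence $\mathbb{P}[X_v^{(t,L)}=1]$ does \emph{not} tend to $1$; the event must be weakened to something like ``a positive density of the walkers in the block form a common class which moreover links to the neighbouring blocks,'' and establishing such a statement uniformly over the balls of an arbitrary bounded-degree graph is essentially the content of the conjecture (compare Conjecture \ref{con:polylog}, which the paper also leaves open). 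Your appeal to Theorem \ref{thm: et2} and Proposition \ref{prop: main} does not help here: those results require regularity/expansion or the Eulerian structure together with a stationary particle density on a \emph{closed} finite system, none of which survives passage to an induced ball with walkers leaking through the boundary.

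There is a second, quantitative obstruction even where the marginals could be controlled. Your field is $k$-dependent with $k=2(L+t)+1$, so the Liggett--Schonmann--Stacey threshold for dominating Bernoulli($p^{\star}$) requires the failure probability of $X_v$ to be smaller than roughly $(c/D)^{D}$ with $D=|B(v,k)|\le \Delta^{2(L+t)}$, i.e.\ doubly exponentially small in $t$. But $X_v$ fails with probability at least singly exponential in $t$ (e.g.\ the event that some $w\in B(v,L-1)$ carries a single lazy isolated walker, as in \S 5, already costs only $e^{-Ct}$). A single-scale application of LSS therefore cannot close; one would need either a genuine multi-scale scheme or a block renormalization in which the dependence is between $O(1)$ neighbouring blocks of a renormalized graph that itself has $p_c<1$ --- and constructing such a renormalized graph from an arbitrary $G$ with $p_c(G)<1$ is a further unaddressed difficulty. (One correct and easy point worth keeping: since the relation $\simt$ only coarsens as $t$ grows, it suffices to produce an infinite class at a single time $t_c$, so you are free to couple $t$ to $L$.)
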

Let $K_n$ be the complete graph on $n$ vertices. When the holding probabilities of the walks are taken to be either $q_n=0$ or $q_n=1/n$, one can show that after a single step the SN model (with $\mathrm{Pois}(1)$ walkers per site) behaves in some sense like a critical Erd\H os R\'enyi random graph, $G(n,1/n)$, while after two steps it behaves like the super-critical random graph  $G(n,2/n)$ (hence $\mathrm{w.h.p.}$ a giant class of walkers emerges after precisely 2 steps). We believe that also in the continuous-time setup there is a phase transition.
\begin{open}
Consider the SN model on $K_n$ in continuous-time (each walker has jump rate 1). What is the critical time $t_c$ for the emergence of a giant class of walkers?
\end{open}
\section*{Acknowledgements}
We thank Gady Kozma for his great contribution for some of the results presented in this paper.
We  like to thank also Riddhipratim Basu, Hilary
Finucane, Eviatar Procaccia, Allan Sly and especially Ofer Zeitouni for many useful discussions and
for reading some version of this work
carefully and providing helpful comments.

\bibliographystyle{plain}
\bibliography{SN}

\appendix
\section{Deferred proofs}

\subsection{Proof of Lemma \ref{lem: Poissum}}
\label{s:proofl4.8}
\textbf{Lemma 4.10}
Let $\xi_1,\ldots, \xi_m$ be independent Poisson random variables. Let $p_1,\ldots,p_m \in (0,1)$. Denote    $p_*:=\max p_i$,  $S:=\sum_{i=1}^m p_{i} \xi_i $ and $\mu:=\mathbb{E}[S]$. Then for all $\eps \in (0,1]$
\begin{equation*}
\begin{split}
& \Pr[S \le (1-\eps)\mu ] \le e^{-\half \mu \eps^2/p_* }.
\\ & \Pr[S \ge (1+\eps)\mu ] \le e^{-\sfrac{1}{4} \mu \eps^2/p_* }.
\end{split}
\end{equation*} 
\noindent
\emph{Proof:}
Denote $\mu_i:=\E[\xi_i]$ and $\lambda:=\eps/p_*$. As $\sum_k e^{-\lambda p_i k} \mu_i^k / k! = e^{\mu_i e^{-\lambda p_i}}$ and $\lambda p_i \le 1 $ (which implies that $e^{-\lambda p_i}-1 \le -\lambda p_i +\half (\lambda p_i)^2 $) we have that $$\mathbb{E}[e^{-\lambda p_i \xi_i}]=\exp[\mu_i(e^{-\lambda p_i}-1)]\le \exp [\mu_i(-\lambda p_i +\half (\lambda p_i)^2)]\le \exp [\mu_i(-\lambda p_i +\half \lambda^{2} p_* p_i)], $$
$$ \mathbb{E}[e^{-\lambda S}]= \prod_{i=1}^m \mathbb{E}[e^{-\lambda p_i \xi_i}] \le \exp [\sum_{i=1}^m \mu_i(-\lambda p_i + \half \lambda^{2} p_* p_i)]  =  \exp[\mu( - \lambda +\half \lambda^{2} p_*  )]. $$
Finally, by Markov's inequality and the choice of $\lambda$
$$\Pr[S \le (1-\eps)\mu ]=\Pr[e^{-\lambda S} \ge e^{-\lambda(1-\eps)\mu} ] \le \mathbb{E}[e^{-\lambda S}]e^{\lambda(1-\eps)\mu} \le e^{\mu( - \lambda \eps+ \half \lambda^{2} p_*  )}=e^{- \half \mu \eps^2/p_* }. $$
Similarly, let $a:=\eps /(2 p_*) $. Then  $\sum_k e^{a p_i k} \mu_i^k / k! = e^{\mu_i e^{a p_i}}$. Since  $a p_i \le 1 $ we have that $e^{a p_i}-1 \le a p_i +(a p_i)^2  $ and so $$\mathbb{E}[e^{a p_i \xi_i}]=\exp[\mu_i(e^{a p_i}-1)]\le \exp [\mu_i[(a p_i +(a p_i)^2)]\le \exp [\mu_i(a p_i +a^{2} p_* p_i)], $$
$$ \mathbb{E}[e^{a S}]= \prod_{i=1}^m \mathbb{E}[e^{a p_i \xi_i}] \le \exp [\sum_{i=1}^m \mu_i(a p_i +a^{2} p_* p_i)]  =  \exp[\mu(a+\lambda^{2} p_*  )]. $$
Finally, by Markov's inequality and the choice of $a$ we get that
$$\Pr[S \ge (1+\eps)\mu ]=\Pr[e^{a S} \ge e^{a(1+\eps)\mu} ] \le \mathbb{E}[e^{a S}]e^{-a(1+\eps)\mu} \le e^{\mu( - a \eps+  a^{2} p_*  )}=e^{-  \mu \eps^2/(4p_*) }. \quad \qed $$ 

\subsection{Proof of Lemma \ref{lem: mainlem}}
\label{s:proofl4.2}
\textbf{Lemma 4.2}
Let $J_k$ be a non-increasing $\Z_+$-valued random process, measurable w.r.t.~filtration $(\F_k)_{k \ge 0}$, with $J_0 \le m$, satisfying that for some $0<\alpha,p,\eps_m<1$, for each $k$ there exists some events $A_k \in \F_k$ with $\Pr(A_k) \ge 1-\eps_m$ satisfying that
\begin{equation*}
 \mathbb{E}[1_{\{J_{k+1} \le (1-p/2) J_k \}}1_{A_k}  \mid \F_k ] \ge \alpha1_{A_k}.
\end{equation*}
Let $T_0:=\inf \{k:J_k=0 \}$. Then there exists some constant $C_{\alpha,p} \le K/(\alpha p)$ such that 
$$\Pr[T_0 > \lceil C_{\alpha,p}\log m \rceil] \le m^{-2}+\eps_m (\lceil C_{\alpha,p}\log m \rceil +1). $$ 
\noindent
\emph{Proof:}
Denote the complement of $A_k$ by $B_k$. Let $\tau:=\inf \{k: 1_{B_k}=1 \} $. Define $I_k=J_k$ for $k<\tau$ and $I_k=0$ for $k \ge \tau$. Denote $L:=\lceil C_{\alpha,p}\log m \rceil$, where $C_{\alpha,p}$ shall be determined soon. Let $T_0':=\inf \{k:I_k=0 \}$. By a union bound over $\bigcup_{i=0}^{L}B_k$ it suffices to show that \[\Pr[T_0' \ge L] \le m^{-2}\] (since $\Pr[T_0 > L] -\Pr[T_0' > L] \le \Pr(\bigcup_{i=0}^{L}B_k)  $). Let  $a:=1/(1-\frac{1}{2} \alpha p )$. By \eqref{eq: conditionalexp} $M_k:=a^{k}I_k $ is a super-martingale (w.r.t.~the filtration $(\F_k)_{k \ge 0}$). In particular, for $C_{\alpha,p}:=3/\log a $ we have \[\mathbb{E}[I_{L} ] \le a^{-L}\mathbb{E}[M_0] \le m^{-3}\mathbb{E}[ J_0] \le m^{-2}.\] By Markov's inequality $$\Pr[T_0' \ge L]=\Pr[I_L \ge 1] \le \mathbb{E}[I_L] \le m^{-2}. \qed $$
  
\subsection{Proof of Lemma \ref{lem:neighbormass}}
\label{s:proofl4.X}
\textbf{Lemma 4.14} Let $G=(V,E)$ be a finite Eulerian graph. Let $Y_v(t) $ be the number of walkers which occupy vertex $v$ at time $t$, where $(Y_v(0))_{v \in V}$ is some arbitrary deterministic initial configuration of walkers and different walkers perform independent LSRWs. Let $(v,u) \in E $. Let $Y:= \max_{x \in V } \sfrac{ d_{v} }{d_{x}}Y_x(0) $. Then
\begin{equation}    
\label{e:neighbormass2}
2d_v\mathbb{E}[Y_u(t)]  \ge \mathbb{E}[Y_v(t-1)] \ge \sfrac{1}{3}(\mathbb{E}[Y_v(t)]-Y  \exp (-c_{1} t ) ).
\end{equation}   
\emph{Proof:}
The first inequality in \eqref{e:neighbormass2} is trivial. We now prove the second inequality. Let $v \in V $ and $t \in \N$. We say that a path $\gamma'$ is non-lazy if $\gamma'(i) \neq \gamma'(i+1)$ for all $i$.
Let $\hat \Gamma_{\ell}$ be the collection of all non-lazy paths of length at most  $\ell$ which terminate at $v$.  
 For a walk $\gamma$ let $\gamma'$ be its projection to its non-lazy version (obtained by omitting from $\gamma$ coordinates with the same value as that of the previous time unit).

For every  $\gamma' \in \hat \Gamma_{t} $ (resp.\   $\gamma' \in \hat \Gamma_{t-1} $) let $W_{\gamma'}(t)$ (resp.\  $W_{\gamma'}(t-1) $) be the collection of walks of length $t$ (resp. $t-1$) whose non-lazy version is $\gamma'$. Let $\hat P$ be the transition matrix of the non-lazy walk (i.e.\ $P=\half(I+\hat P)$, where $I$ is the identity matrix). Let $b_{k}:=\PP[\mathrm{Bin}(t,\half)=k]$ and $a_k:=\PP[\mathrm{Bin}(t,\half)>k]$. We first note  that the contribution to $\mathbb{E}[Y_v(t)]$ coming from walks in $\cup_{\gamma'\in \hat \Gamma_{t} \setminus \hat \Gamma_{\frac{2t}{3}}  }W_{\gamma'}(t)$ (i.e.~from walks $\gamma$'s with less than $t/3$ lazy steps) is at most
\begin{equation}
\label{e:toolittlelazy}
\sum_{k> 2t/3 }b_k \sum_{x}Y_x(0) \hat P^k(x,v) \le \sum_{k> 2t/3 }b_k \sfrac{ Y}{d_{v}}   \sum_{x} d_{x} \hat P^k(x,v)=a_{2t/3}Y \le Ye^{-c_1 t},   \end{equation}
where we have used  $\sum_{k> 2t/3 }b_k \le e^{-c_1 t} $, and $\pi \hat P^k=\pi $ to argue that $\sum_{x} d_{x} \hat P^k(x,v)=d_{v}$.

Recall (\S\ref{sec: preliminaries}) that    $p(\gamma)$ is the probability that a walker starting from $\gamma(0)$  performs the walk $\gamma$.  Let  $\gamma' \in \hat \Gamma_{\frac{2t}{3}} $.   We argue that  the total contribution to $\mathbb{E}[Y_v(t)] $ coming from walks in  $W_{\gamma'}(t)$ (which equals $\sum_{\gamma \in W_{\gamma'}(t)}p(\gamma)Y_{\gamma(0)}(0) )$ is larger than the contribution to $\mathbb{E}[Y_v(t-1)]$ coming from walks in  $ W_{ \gamma'}(t-1)$ (which equals $\sum_{\gamma \in  W_{ \gamma'}(t-1)}p(\gamma)Y_{\gamma(0)}(0) $) by a factor of at most $3$.   Since the total number of lazy steps performed  is at least $t/3$,  the ``cost" $\sfrac{\sum_{\gamma \in  W_{ \gamma'}(t)}p(\gamma)}{\sum_{\gamma \in  W_{\gamma'}(t-1)}p(\gamma)} $ of making one less lazy step is bounded from above by \[\sup_{k \in \N :k \in [t/3,t]} \binom{t}{k}/\binom{t}{k-1}=\sup_{k \in [t/3,t]}\frac{t-k+1}{k} \le 3 .\]

 Putting everything together gives:
\[\mathbb{E}[Y_v(t-1)] \ge \sum_{\gamma' \in \hat \Gamma_{2t/3} } \sum_{\gamma \in  W_{ \gamma'}(t-1)}p(\gamma)Y_{\gamma(0)}(0) \ge \sfrac{1}{3} \sum_{\gamma' \in \hat \Gamma_{2t/3} } \sum_{\gamma \in  W_{ \gamma'}(t)}p(\gamma)Y_{\gamma(0)}(0)    . \]
 By \eqref{e:toolittlelazy}  the rightmost term above is at least $\sfrac{1}{3}(\mathbb{E}[Y_v(t)]-Y  \exp (-c_{1} t ) )$, as desired.
 \qed
\end{document}